\newcommand\la{\leftarrow}
\newcommand\id{\mathrm{id}}
\newcommand\ten{\otimes}
\newcommand\eps{\epsilon}
\newcommand\CC{\mathrm{C}}
\renewcommand\H{\mathrm{H}}
\newcommand\z{\mathrm{Z}}
\newcommand\HH{\mathrm{HH}}
\newcommand\HC{\mathrm{HC}}
\newcommand\HN{\mathrm{HN}}
\newcommand\Z{\mathbb{Z}}
\newcommand\Q{\mathbb{Q}}
\newcommand\bG{\mathbb{G}}
\newcommand\bL{\mathbb{L}}
\newcommand\C{\mathcal{C}}
\newcommand\cA{\mathcal{A}}
\newcommand\cB{\mathcal{B}}
\newcommand\cC{\mathcal{C}}
\newcommand\cD{\mathcal{D}}
\newcommand\cF{\mathcal{F}}
\newcommand\cL{\mathcal{L}}
\newcommand\cM{\mathcal{M}}
\newcommand\cP{\mathcal{P}}
\newcommand\cS{\mathcal{S}}
\newcommand\F{\mathscr{F}}
\newcommand\sF{\mathscr{F}}
\newcommand\sO{\mathscr{O}}
\newcommand\sP{\mathscr{P}}
\newcommand\sX{\mathscr{X}}
\newcommand\fA{\mathfrak{A}}
\newcommand\g{\mathfrak{g}}
\newcommand\cHom{\mathcal{H}\!\mathit{om}}
\newcommand\Ho{\mathrm{Ho}}
\newcommand\Alg{\mathrm{Alg}}
\newcommand\comm{\mathrm{com}}
\newcommand\Hom{\mathrm{Hom}}
\newcommand\Map{\mathrm{Map}}
\newcommand\map{\mathrm{map}}
\newcommand\cyc{\mathrm{cyc}}
\newcommand\HHom{\underline{\mathrm{Hom}}}
\newcommand\DDer{\underline{\mathrm{Der}}}
\newcommand\Ext{\mathrm{Ext}}
\newcommand\End{\mathrm{End}}
\newcommand\Iso{\mathrm{Iso}}
\newcommand\Coiso{\mathrm{CoIso}}
\newcommand\cone{\mathrm{cone}}
\newcommand\cocone{\mathrm{cocone}}
\newcommand\per{\mathrm{per}}
\newcommand{\brh}{\llbracket \hbar \rrbracket}
\newcommand{\llb}{\llbracket}
\newcommand{\rrb}{\rrbracket}
\newcommand\coker{\mathrm{coker\,}}
\newcommand\ch{\mathrm{ch}}
\newcommand\At{\mathrm{At}}
\newcommand\Spec{\mathrm{Spec}\,}
\newcommand\Set{\mathrm{Set}}
\newcommand\Mor{\mathrm{Mor}\,}
\newcommand\Aff{\mathrm{Aff}}
\newcommand\Sp{\mathrm{Sp}}
\newcommand\PreSp{\mathrm{PreSp}}
\newcommand\PreBiSp{\mathrm{PreBiSp}}
\newcommand\BiSp{\mathrm{BiSp}}
\newcommand\BiIso{\mathrm{BiIso}}
\newcommand\BiLag{\mathrm{BiLag}}
\newcommand\Pol{\mathrm{Pol}}
\newcommand\Lag{\mathrm{Lag}}
\newcommand\Comp{\mathrm{Comp}}
\newcommand\nondeg{\mathrm{nondeg}}
\newcommand\ad{\mathrm{ad}}
\newcommand\<{\langle}
\renewcommand\>{\rangle}
\newcommand\Lim{\varprojlim}
\newcommand\LLim{\varinjlim}
\newcommand\ho{\mathrm{ho}\!}
\newcommand\into{\hookrightarrow}
\newcommand\onto{\twoheadrightarrow}
\newcommand\abuts{\implies}
\newcommand\xra{\xrightarrow}
\newcommand\pr{\mathrm{pr}}
\newcommand\dmd{\diamond}
\newcommand\bt{\bullet}
\newcommand\by{\times}
\newcommand\mc{\mathrm{MC}}
\newcommand\mmc{\underline{\mathrm{MC}}}
\newcommand\Perf{\mathrm{Perf}}
\newcommand\Symm{\mathrm{Symm}}
\newcommand\GL{\mathrm{GL}}
\newcommand\Mat{\mathrm{Mat}}
\newcommand\et{\acute{\mathrm{e}}\mathrm{t}}
\newcommand\Tot{\mathrm{Tot}\,}
\newcommand\tot{\mathrm{tot}}
\newcommand\tr{\mathrm{tr}}
\newcommand\ev{\mathrm{ev}}
\newcommand\pd{\partial}
\newcommand\half{\frac{1}{2}}
\newcommand\Tor{\mathrm{Tor}}
\newcommand\rig{\mathrm{rig}}
\newcommand\gr{\mathrm{gr}}
\newcommand\Fil{\mathrm{Fil}}
\newcommand\DR{\mathrm{DR}}
\newcommand\op{\mathrm{opp}}
\newcommand\co{\colon\thinspace}
\newcommand\oR{\mathbf{R}}
\newcommand\oL{\mathbf{L}}
\newcommand\uleft\underleftarrow
\newcommand\uline\underline
\newcommand\uright\underrightarrow
\newcommand{\tps}{\texorpdfstring}
\DeclareMathOperator{\hatHHom}{\uline{\mathrm{H}\widehat{\mathrm{o}}\mathrm{m}}}
\DeclareMathOperator{\hatTot}{\mathrm{T}\widehat{\mathrm{o}}\mathrm{t}}
\newtheorem{theorem}{Theorem}[section]
\newtheorem{proposition}[theorem]{Proposition}
\newtheorem{corollary}[theorem]{Corollary}
\newtheorem{lemma}[theorem]{Lemma}
\newtheorem*{theorem*}{Theorem}
\newtheorem*{proposition*}{Proposition}
\newtheorem*{corollary*}{Corollary}
\newtheorem*{lemma*}{Lemma}
\newtheorem*{conjecture*}{Conjecture}
\theoremstyle{definition}
\newtheorem{definition}[theorem]{Definition}
\newtheorem*{definition*}{Definition}
\newtheorem*{notation*}{Notation}
\theoremstyle{remark}
\newtheorem{example}[theorem]{Example}
\newtheorem{remark}[theorem]{Remark}
\newtheorem{remarks}[theorem]{Remarks}
\newtheorem{properties}[theorem]{Properties}
\newtheorem*{example*}{Example}
\newtheorem*{examples*}{Examples}
\newtheorem*{remark*}{Remark}
\newtheorem*{remarks*}{Remarks}
\newtheorem*{exercise*}{Exercise}
\newtheorem*{property*}{Property}
\newtheorem*{properties*}{Properties}
\begin{document}

\begin{abstract}
 We introduce the notions of shifted bisymplectic and shifted double Poisson structures on differential graded associative algebras, and more generally on non-commutative derived moduli functors with well-behaved cotangent complexes. For smooth algebras concentrated in degree $0$, these structures recover the classical notions of bisymplectic and double Poisson structures, but in general they involve an infinite hierarchy of higher homotopical data, ensuring that they are invariant under quasi-isomorphism.  The structures induce shifted symplectic and shifted Poisson structures on the underlying commutative derived moduli functors, and also on underlying representation functors. 
 
 We show that there  are canonical equivalences between the spaces of  shifted  bisymplectic structures and of non-degenerate $n$-shifted double Poisson structures. We also give canonical shifted bisymplectic and bi-Lagrangian structures on various derived non-commutative moduli functors of modules over Calabi--Yau dg categories. 
 
 Unlike their commutative counterparts, these structures enjoy a formal integration property, which we exploit to  show that Calabi--Yau and pre-Calabi--Yau structures on a dg algebra correspond respectively to bisymplectic and double Poisson structures on its quotient prestacks by the adjoint $\bG_m$-action.
\end{abstract}

\title[Shifted bisymplectic and  double Poisson structures]{Shifted bisymplectic and  double Poisson structures on non-commutative derived prestacks}
\author{J.P.Pridham}

\maketitle


\section*{Introduction}

In derived algebraic geometry, most examples of the shifted symplectic and Lagrangian structures of \cite{KhudaverdianVoronov,BruceGeomObjects,PTVV} 
  on derived moduli stacks arise for moduli problems which are linear in nature, such as moduli of perfect complexes on Calabi--Yau varieties.  
The idea behind this paper is that for such problems,  the shifted symplectic and Lagrangian structures are just commutative shadows of structures living on a non-commutative moduli functor.

For smooth algebras, the non-commutative counterpart of a  $0$-shifted symplectic structure was introduced in \cite{CrawleyBoeveyEtingofGinzburgNCGeomQuivers} as  a bisymplectic structure, which is a closed $2$-form in the cyclic (or Karoubi) de Rham double complex satisfying a non-degeneracy condition. The straightforward generalisation of this to a quasi-isomorphism-invariant definition on quasi-free differential graded associative algebras (DGAAs)  just replaces the kernel $\ker(d \co \Omega^2_{\cyc} \to \Omega^3_{\cyc})$ with the product total complex $\Tot^{\Pi}\Omega^{\ge 2}_{\cyc}$, so an $n$-shifted bisymplectic structure on $(A,\delta)$  consists of a sequence $(\omega_i \in (\Omega^i_{\cyc}(A))_{i+n})_{i \ge 2}$ with $d\omega_i=\delta\omega_{i+1}$, together with a non-degeneracy condition on $\omega_2$ phrased in terms of quasi-isomorphism rather than isomorphism.  

One source of motivation for constructing shifted symplectic and Lagrangian structures is that they give rise to shifted Poisson structures, by \cite{KhudaverdianVoronov,poisson,CPTVV} in the symplectic case and \cite{MelaniSafronovII} in the Lagrangian case. A shifted Poisson structure is essentially an $L_{\infty}$ structure in which all of the operations are multiderivations; in the  $0$-shifted non-singular underived setting, this just reduces to a Poisson bracket, and then the non-commutative counterpart is given by the double Poisson brackets of \cite{vdBerghDoublePoisson}, which on an associative algebra $A$ map from  $A \by A$ to  $A \ten A$. 
Formulating a quasi-isomorphism-invariant  generalisation of this to DGAAs is much more subtle than for shifted symplectic structures, and involves the complex of non-commutative cyclic polyvectors,  with the $L_{\infty}$-operations lifting to maps from  $A^i$ to  $A^{\ten i}$; there is an equivalent characterisation given by protoperads in \cite{LerayProperadDoublePoisson}.  

Our main results, Corollary \ref{compatcor2} and its generalisation Theorem \ref{Artinthm}, establish the equivalence between $n$-shifted bisymplectic structures  and non-degenerate $n$-shifted double Poisson structures on non-commutative derived moduli functors with well-behaved cotangent complexes, and Corollary \ref{coisocor} gives a similar result for $n$-shifted bi-Lagrangians. At this level of generality,  there is a further subtlety in defining shifted double Poisson structures because of their limited functoriality, similar to  the difficulties with shifted Poisson structures on (commutative) derived moduli stacks. The solution is to take \'etale approximations of the functors  using the stacky DGAAs of \cite{NCstacks}, which act as a derived non-commutative analogue of Lie algebroids. 
A stacky DGAA is a bidifferential bigraded associative algebra, where we think of one grading as stacky and the other as derived,  only defining equivalences relative to the derived structure, and the formulation of shifted double Poisson structures extends to them in an \'etale functorial way.  

Given an $n$-shifted bisymplectic non-commutative derived moduli functor $F$, and a Frobenius algebra $M$,    Propositions \ref{weilArtinprop} and \ref{weilhgsprop} show that the non-commutative Weil restriction of scalars functor $F(M\ten-)$ is also $n$-shifted bisymplectic. When applied to matrix algebras $M$,   this amounts to saying that the functors of representations are  $n$-shifted bisymplectic, which in turn implies  that the  underlying representation functors on commutative objects are all $n$-shifted symplectic. 

From this perspective, functors of representations of an algebra $A$ arise by Weil restriction of the quotient prestack $[\Spec^{nc} A/\bG_m]$ of the adjoint action by units (rather than  of $\Spec^{nc} A$ itself). It then turns out that Calabi--Yau and pre-Calabi--Yau structures on $A$ correspond precisely to shifted bisymplectic and double Poisson structures on $[\Spec^{nc} A/\bG_m]$ (Propositions \ref{CYprop} and \ref{preCYprop}). These calculations are simplified by  a formal integration property unique to  this non-commutative setting: that when a derived moduli functor admits an Artin atlas by an affine object, its   shifted bisymplectic and double Poisson structures are uniquely determined by those on the formal completion of the atlas (Corollaries \ref{integratecorBiSp} and \ref{integratecorDP}). 


Other important results are Theorems \ref{Perfthm1}, \ref{Perfthm2}, \ref{Perfthm2Lag}, \ref{Morthm} and \ref{MorthmLag},  which construct natural shifted bisymplectic and bi-Lagrangian structures on various non-commutative derived moduli functors associated to Calabi--Yau dg algebras and dg categories.  As a consequence of our main theorem, we then show in  Corollaries \ref{Perfpoissoncor} and \ref{Morpoissoncor} and  Remark \ref{CostelloRozenblyumRmk}  that there are natural shifted double Poisson structures on various derived moduli functors of modules associated to relative Calabi--Yau  dg categories.

\smallskip
The structure of the paper is as follows.
\smallskip

In Section \ref{spsn}, we introduce shifted bisymplectic structures, defining them first for non-commutative affine objects (DGAAs concentrated in non-negative homological degrees), then for  derived non-commutative Artin prestacks, and finally for derived non-commutative  prestacks with well-behaved cotangent complexes. The main results are the existence and comparison theorems  listed above, constructing shifted bisymplectic structures on various functors parametrising moduli of modules, and extending shifted bisymplectic structures to   
 Weil restriction of scalars by Frobenius algebras, and hence to representation functors.
 
Section \ref{poisssn}  then introduces  shifted double Poisson structures and establishes the equivalence between non-degenerate shifted double Poisson structures and shifted bisymplectic structures. A shifted double Poisson structure $\pi$  on a DGAA $A$ consists of shifted cyclic $i$-derivations $\pi_i$ for $i \ge 2$, where a shifted $i$-derivation is a map $\pi_i \co  A^i \to A^{\ten i}$   which satisfies  cyclic symmetry or antisymmetry depending on the shift, and acts as a derivation on its final input with respect to the outer $A$-module structure on $A^{\ten i}$. To be a double Poisson structure, $\pi$ must satisfy a higher analogue of the double Jacobi identity, which can be phrased as a Maurer--Cartan condition with respect to a double Schouten--Nijenhuis bracket on the complex $\widehat{\Pol}^{nc}_{\cyc}(A,n)$ of cyclic multiderivations.

In order to compare shifted bisymplectic and double Poisson structures, we adapt the method of \cite{poisson}, which turns out to generalise the Legendre transformation used in \cite{KhudaverdianVoronov}.  The observation which enables us to do this is that there is an  $(n+1)$-shifted double Poisson algebra  $\widehat{\Pol}^{nc}(A,n)$ of $n$-shifted multiderivations whose quotient by commutators recovers the complex $\widehat{\Pol}^{nc}_{\cyc}(A,n)$ of shifted cyclic multiderivations. For any $n$-shifted  double Poisson structure $\pi$, contraction then defines  a multiplicative map $\mu(-,\pi)$ from the associative de Rham complex to $\widehat{\Pol}^{nc}(A,n)$, so taking quotients by commutators gives us a map from the  cyclic de Rham complex to the complex $\widehat{\Pol}^{nc}_{\cyc}(A,n)$ with differential twisted by $\pi$, whose cohomology we can think of as non-commutative Poisson cohomology. 

The condition for an  $n$-shifted bisymplectic structure $\omega$ to be compatible with an $n$-shifted double Poisson structure $\pi$ is then that
\[
 \mu(\omega,\pi) \sim \sum_{i\ge 2} (i-1)\pi_i.
\]
When $\pi$ is non-degenerate, $\mu(-,\pi)$ is a quasi-isomorphism, so there is an essentially unique shifted bisymplectic form $\omega$ compatible with $\pi$. Conversely, for each shifted bisymplectic form $\omega$ there is an essentially unique compatible non-degenerate $n$-shifted double Poisson structure $\pi$. The proof of this is much harder, and proceeds by solving inductively for the terms $\pi_i$ by using towers of obstructions associated to filtered differential graded Lie algebras.

The comparison is  first done (Corollary \ref{compatcor2}) in the affine case and then  (Theorem \ref{Artinthm}) for more general functors, making use of stacky DGAAs and \'etale functoriality. We also discuss shifted double co-isotropic structures and their relation with shifted bi-Lagrangians in \S \ref{coisosn}.

\subsubsection*{Notation}

For a chain (resp. cochain) complex $M$, we write $M_{[i]}$ (resp. $M^{[j]}$) for the complex $(M_{[i]})_m= M_{i+m}$ (resp. $(M^{[j]})^m = M^{j+m}$). We often work with double complexes, in the form of cochain chain complexes, in which case $M_{[i]}^{[j]}$ is the double complex $(M_{[i]}^{[j]})^n_m= M_{i+m}^{j+n}$.  When we have a single grading and need to compare chain and cochain complexes, we silently  make use of the equivalence  $u$ from chain complexes to cochain complexes given by $(uV)^i := V_{-i}$, and refer to this as rewriting the chain complex as a cochain complex (or vice versa). On suspensions, this has the effect that $u(V_{[n]}) = (uV)^{[-n]}$. We also occasionally write $M[i]:=M^{[i]} =M_{[-i]}$ when there is only one grading.

For chain complexes, by default we denote differentials by $\delta$. When we work with cochain chain complexes, the cochain differential is usually denoted by $\pd$. We use the subscripts and superscripts $\bt$ to emphasise that chain and cochain complexes incorporate differentials, with $\#$ used instead when we are working  with the underlying graded objects.

Given $A$-modules $M,N$ in chain complexes, we write $\HHom_A(M,N)$ for the cochain complex given by
\[
 \HHom_A(M,N)^i= \Hom_{A_{\#}}(M_{\#},N_{\#[-i]}),
\]
with differential $\delta f= \delta_N \circ f \pm f \circ \delta_M$,
where $V_{\#}$ denotes the graded vector space underlying a chain complex $V$.

We write $s\Set$ for the category of simplicial sets, and $\map$ for derived mapping spaces, i.e. the right-derived functor of simplicial $\Hom$. (For dg categories, $\map$ corresponds via the Dold--Kan correspondence to the good truncation of $\oR\HHom$.)

\tableofcontents

\section{Shifted bisymplectic structures}\label{spsn}

From now on, we will fix a  graded-commutative algebra $R=R_{\bt}$ in chain complexes over $\Q$. As in \cite[Definition \ref{NCstacks-dgalgdef}]{NCstacks}, 
 we write $dg\Alg(R)$ for the category of associative  $R$-algebras $A_{\bt}$ in  chain complexes, and $dg_+\Alg(R)$ for its full subcategory consisting of objects concentrated in non-negative chain degrees. We refer to objects of $dg\Alg(R)$ as differential graded associative $R$-algebras ($R$-DGAAs).

There is a model structure on $dg_+\Alg(R)$ (resp. $dg\Alg(R)$) in which weak equivalences are quasi-isomorphisms and fibrations are surjections in strictly positive degrees (resp. surjections); any $R$-DGAAs which are freely generated as graded associative $R_{\#}$-algebras (forgetting the differential)  are cofibrant in the model structure on $dg_+\Alg(R)$, as are their retracts.

\subsection{Shifted bisymplectic structures on non-commutative derived affines}\label{spaffsn}

For the purposes of this section, we fix  a cofibrant  $R$-DGAA $A=A_{\bt}$. We will denote the differentials on $A$, $R$ and related constructions by $\delta$. 

\begin{remark} \label{weakerassumptionrmk}
 The condition that $A$ be cofibrant over $R$ is a little stronger than we need. It will suffice to assume that for all left $A$-modules $M$ and right $A$-modules $N$, the natural map $M\ten^{\oL}_A\oL\Omega^1_{A/R}\ten^{\oL}_AN \to M\ten_A\Omega^1_A\ten_AN$ is a quasi-isomorphism. In particular, this applies if $R$ and $A$ are concentrated in non-negative degrees, with  $A_{\#}$ flat as an   $R_{\#}$-module and flat  as a  $A_{\#}$-bimodule. All formulae will then remain valid, except for the term $\gr_F^0\DR_{\cyc}(A/R)=A/[A,A]$ below, which does not affect bisymplectic structures. 
 \end{remark}

 The following is \cite[Definition \ref{NCstacks-Omegadef}]{NCstacks}:
\begin{definition}\label{Omegadefhere}
 Given  a morphism $C \to A$ in $dg\Alg(R)$, we  write $A^{e}:=A\ten_RA^{\op}$ and define the $A^{e}$-module $\Omega^1_{A/C}$ in complexes to be the kernel of the multiplication map $A\ten_C A \to A$, denoting its differential (inherited from $A$) by $\delta$. 

We denote by $\oL\Omega^1_{A/C}$ the cotangent complex, given by 
\[
 \cocone(A\ten^{\oL}_CA \to A),
\]
again regarded as an $A^{e}$-module.

We simply write $\Omega^1_A:=\Omega^1_{A/R}$ and $\oL\Omega^1_A:=\oL\Omega^1_{A/R}$.
\end{definition}

\begin{definition} 
We define $\Omega^{\bt}_{A/R}$ (a double complex) to be the tensor algebra of $\Omega^1_{A/R}$
over $A$, equipped with the de Rham differentials $d \co \Omega^p_{A/R} \to \Omega^{p+1}_{A/R}$ (usually denoted $B$ in the cyclic homology literature) in addition to the structural differentials; the de Rham differential $d$ is determined by the properties that it is an algebra derivation, that $d\circ d=0$ and that $da=a\ten 1 - 1\ten a$ for $a \in A$.

Following \cite[1.24]{karoubiHomologieCyclique} and \cite[7.2.2]{KontsevichSoibelmanAinftyalgcats}, 
 we also form the cyclic (or Karoubi) de Rham double complex $\Omega^{\bt}_{A/R,\cyc}$ as the quotient $\Omega^{\bt}_{A/R}/[ \Omega^{\bt}_{A/R},\Omega^{\bt}_{A/R}]$ by the commutator  $[\omega,\nu]= \omega \cdot \nu -(-1)^{(\deg \omega)(\deg \nu)}\nu \cdot \omega$, where $\deg$ denotes total degree. 
\end{definition}
Explicitly, for $p>0$ elements of $\Omega^p_{\cyc}$ are given by equivalence classes under $C_p$-symmetries of elements of the form $a_1db_1\ldots a_pdb_p$, for $a_i,b_i \in A$.

\begin{definition}\label{DRdef}
Define the de Rham complex $\DR(A/R)$ and   cyclic de Rham complex $\DR_{\cyc}(A/R)$ to be the product total cochain complexes of the double complexes 
\begin{align*}
\Omega^{\bt}_{A/R}&= ( A \xra{d} \Omega^1_{A/R} \xra{d} \Omega^2_{A/R}\xra{d} \ldots),\\
 \Omega^{\bt}_{A/R,\cyc}&= ( A/[A,A] \xra{d} \Omega^1_{A/R,\cyc} \xra{d} \Omega^2_{A/R,\cyc}\xra{d} \ldots),
\end{align*}
so the total differential is $d \pm \delta$.

We define the Hodge filtration $F$ on  $\DR(A/R)$ (resp. $\DR_{\cyc}(A/R)$) by setting    $F^p\DR(A/R) \subset \DR(A/R)$ (resp. $F^p\DR_{\cyc}(A/R) \subset \DR_{\cyc}(A/R)$) to consist of terms $\Omega^i_{A/R}$ (resp. $\Omega^i_{A/R,\cyc}$) with $i \ge p$.
\end{definition}

Properties of the product total complex ensure that a map $f \co A \to B$ induces a quasi-isomorphism $\DR_{\cyc}(A/R) \to \DR_{\cyc}(B/R)$ whenever the maps $\Omega^p_{A/R,\cyc} \to \Omega^p_{B/R,\cyc}  $ are quasi-isomorphisms, which will happen whenever $f$ is a weak equivalence between cofibrant $R$-algebras.

The complex $\DR(A/R)$ has the natural structure of an associative DG algebra over $R$, filtered in the sense that $F^iF^j \subset F^{i+j}$. 

\begin{definition}\label{presymplecticdef}
Define an $n$-shifted pre-bisymplectic structure $\omega$ on $A/R$ to be a cocycle
\[
 \omega \in \z^{n+2}F^2\DR_{\cyc}(A/R).
\]
\end{definition}

Explicitly, this means that $\omega$ is given by an infinite sum $\omega = \sum_{i \ge 2} \omega_i$, with $\omega_i \in (\Omega^i_{A/R,\cyc})_{i-n-2}$ and $d\omega_i = \delta \omega_{i+1}$.

\begin{definition}\label{contractiondef}
 Given  $\phi \in \DDer_R(A,A^{e}):= \HHom_{A^{e}}(\Omega^1_{A/R},A^{e})$, the complex of double derivations, recall from 
\cite[\S 2.6]{CrawleyBoeveyEtingofGinzburgNCGeomQuivers}
that the contraction map $i_{\phi}$ is defined to be  the double derivation $i_{\phi} \co \Omega^*_{A/R} \to \Omega^*_{A/R}\ten \Omega^*_{A/R}$ determined by the properties that $i_{\phi}(a)=0$ and $i_{\phi}(da)= \phi(a)$, for $a \in A$.

Writing $(\alpha \ten \beta)^{\dmd}:= \pm \beta \alpha$, the reduced contraction $\iota_{\phi} \co  \Omega^n_{A/R} \to \Omega^{n-1}_{A/R}$  is then defined in \cite[\S 2.6]{CrawleyBoeveyEtingofGinzburgNCGeomQuivers} by $\iota_{\phi}(\omega):=i_{\phi}(\omega)^{\dmd}$.
\end{definition}

As in \cite[Lemma 2.8.6]{CrawleyBoeveyEtingofGinzburgNCGeomQuivers}, the reduced contraction kills commutators, so descends to a map $\iota_{\phi} \co \Omega^n_{\cyc} \to \Omega^{n-1}$.

\begin{definition}\label{bisymplecticdef}
 If $\Omega^1_{A/R} $ is perfect as an $A^{e}$-module,  define an $n$-shifted bisymplectic structure $\omega$ on $A/R$ to be an $n$-shifted pre-bisymplectic structure $\omega$ for which the component $\omega_2 \in \z^n\Omega^2_{A/R,\cyc}$ induces a quasi-isomorphism
\[
 \omega_2^{\sharp} \co \DDer_R(A,A^{e}) \to (\Omega^1_{A/R})_{[-n]}
\]
by $\omega_2^{\sharp}(\phi)= \iota_{\phi}(\omega_2)$. 
\end{definition}

\begin{remark}
 When $n=0$ and $A$ is concentrated in degree $0$, this recovers the notion of bi-symplectic structures from \cite[Definition 4.2.5]{CrawleyBoeveyEtingofGinzburgNCGeomQuivers}, since the higher terms are then necessarily $0$, giving  $\omega=\omega_2$ with $d\omega_2=0$.
\end{remark}



\begin{definition}\label{PreSpdef}
 Define the space $\PreBiSp(A,n)= \Lim_{p\ge 2}\PreBiSp(A,n)/F^p $ of $n$-shifted pre-bisymplectic structures on $A/R$ to be the simplicial set given in degree $k$ by setting
 \[
(\PreBiSp(A,n)/F^p)_k:= \z^{n+2}((F^2\DR_{\cyc}(A/R)/F^p)\ten_{\Q} \Omega^{\bt}(\Delta^k)),
\]
where 
\[
\Omega^{\bt}(\Delta^k)=\Q[t_0, t_1, \ldots, t_k,\delta t_0, \delta t_1, \ldots, \delta t_k ]/(\sum t_i -1, \sum \delta t_i)
\]
is the commutative dg algebra of de Rham polynomial forms on the $k$-simplex, with the $t_i$ of degree $0$.
 

Let $\BiSp(A,n) \subset \PreBiSp(A,n)$ consist of the bisymplectic structures --- this is a union of path-components.
\end{definition}
Note that  $\PreBiSp(A,n)/F^p$  is canonically weakly  equivalent to the Dold--Kan denormalisation of the complex $\tau^{\le 0}(F^2\DR_{\cyc}(A)^{[n+2]}/F^p)$, where $\tau$ denotes good truncation, and similarly for the limit  $ \PreBiSp(A,n)$. However,   our definition in terms of de Rham polynomial forms will simplify the comparison with double Poisson structures.
 
In particular, the components of $\PreBiSp(A,n)$ are just elements in 
\[
 \H^{n+2}F^2\DR_{\cyc}(A/R), 
\]
while equivalence classes of $k$-morphisms in $\PreBiSp(A,n)$ are  given by elements in $\H^{n+2-k}F^2\DR_{\cyc}(A/R)$.

\begin{example}[Shifted cotangent spaces and their twists]\label{shiftedcotex}
The simplest examples of $n$-shifted pre-bisymplectic structures are given by shifted cotangent spaces of non-commutative affine spaces. Explicitly, if we start from a cofibrant  DGAA $B:=(R\<x_1, \ldots, x_r\>,\delta)$ for some differential $\delta$,  then there is an $n$-shifted cotangent space of $B$  given by $R\<x_1, \ldots, x_r, \xi_1, \ldots, \xi_r\>$ for $\xi_i$ of degree $n-\deg x_i$, with
\[
\delta \xi_j := \sum_i \pm \frac{\pd \delta x_i}{\pd x_j}'' \xi_i\frac{\pd \delta x_i}{\pd x_j}'  ,\quad \text{  where }\quad d\delta x_i= \sum_j \frac{\pd \delta x_i}{\pd x_j}'(dx_j)\frac{\pd \delta x_i}{\pd x_j}''.
\]
This then carries a $(-n)$-shifted symplectic structure explicitly given in co-ordinates by $\omega = \sum_i dx_i d\xi_i$. 

There are also twisted variants of this construction, where we start from $B$ as above, choose an element $f \in B_{\cyc}$ of degree $n-1$ with $\delta f =0$, and then modify $\delta \xi_j$ by  adding $(\frac{\pd f}{\pd x_j})_{\cyc}$, where  $df= \sum_j (\frac{\pd f}{\pd x_j})_{\cyc} dx_j \in \Omega^1_{A,\cyc}$.  We can think of this as the derived critical locus of $f$; if we forced all variables to commute, this would recover the usual derived critical locus of the image of $f$ in $B^{\comm}= (R[x_1, \ldots, x_r],\delta)$.

The simplest examples of this type take $\deg x_i=0$ for all $i$, forcing $\delta x_i=0$, and then  we can easily see that $\delta \omega =0$, because 
\[
 \delta \omega = \pm d\delta(\sum_i\xi_i dx_i )=  \pm d \sum_i (\frac{\pd f}{\pd x_i})_{\cyc} dx_i  \sim  \pm d (df)=0,
\]
where $\sim$ denotes cyclic equivalence;
this recovers the algebra $\fA(B,f)$ from \cite[\S 1.3]{ginzburgCYAlg} as its $0$th homology.  
 \end{example}
 
\begin{definition}\label{Lagdef}
Take a morphism $f \co A \to B$ of  cofibrant  $R$-DGAAs, with an  $n$-shifted pre-bisymplectic structure $\omega$ on $A/R$. We then define a bi-isotropic structure on $B$ relative to $\omega$ to be an element $(\omega, \lambda)$ of 
\[
 \z^{n+1}\cone(F^2\DR_{\cyc}(A/R)\to F^2\DR_{\cyc}(B/R)) 
\]
lifting $\omega$.

 This structure is called bi-Lagrangian if $\Omega^1_{A/R} $ and $\Omega^1_{B/R}$ are perfect as an $A^{e}$-module and a $B^{e}$-module respectively, if $\omega$ is bi-symplectic, and if contraction with the image $(\bar{\omega_2},\bar{\lambda}_2)$ of $(\omega,\lambda)$ in  $\z^{n-1}\cone(\Omega^2_{A/R,\cyc} \to \Omega^2_{B/R,\cyc} )$ 
 induces a quasi-isomorphism
\[
 (f \circ \omega_2^{\sharp}, \lambda_2^{\sharp}) \co \cone(\DDer(B,B^{e}) \to   \DDer_R(A,B^{e})) \to   (\Omega^1_{B/R})_{[-n]}.
\]  
\end{definition}

 \begin{definition}\label{Isodef}
  Given a morphism $A \to B$ of  cofibrant  $R$-DGAAs, define 
  the space $\BiIso(A,B;n)=\Lim_{p\ge 2}\BiIso(A,B;n)/F^p$ of  $n$-shifted bi-isotropic structures on the pair $(A,B)$ over $R$ to be the simplicial set given in degree $k$ by setting
\[
 (\Iso(A,B;n)/F^p)_k:= \z^{n+1}(\cone(F^2\DR_{\cyc}(A/R)/F^p  \to F^2\DR_{\cyc}(B/R)/F^p)\ten_{\Q} \Omega^{\bt}(\Delta^k)).
\]

Set $\BiLag(A,B;n) \subset \BiIso(A,B;n)$ to consist of the   bi-Lagrangians on bi-symplectic structures --- this is a union of path-components.
\end{definition}
Thus the components of $\BiIso(A,B;n)$ are just elements in 
\[
 \H^{n+1}\cone(F^2\DR_{\cyc}(A/R) \to F^2\DR_{\cyc}(B/R)), 
\]
and equivalence classes of $k$-morphisms in $\Iso(A,B;n)$ are  given by elements in $\H^{n+1-k}$ of the same complex. 

\begin{definition}\label{commdefhere}
If $\Q \subset R$, there is a natural model structure on   differential graded commutative $R$-algebras for which the inclusion functor to $dg\Alg(R)$ is right Quillen. The left adjoint functor $B \mapsto B^{\comm}$ is given by taking the quotient $B/([B,B])$ by the commutator ideal, and we denote its left-derived functor by $B \mapsto B^{\oL,\comm}$. 
\end{definition}

With the notation of \cite{poisson}, we then have:
\begin{lemma} \label{commSplemma}
 There is a natural map $\PreBiSp(A,n) \to \PreSp(A^{\oL,\comm},n)$ which sends the space $\BiSp(A,n)$ of $n$-shifted bisymplectic structures on $A$ to the space   $\Sp(A^{\oL,\comm},n)$ of $n$-shifted symplectic structures on its commutative quotient. 
 
 Similarly, there is a natural map $\BiIso(A,B;n) \to \Iso(A^{\oL,\comm},B^{\oL,\comm}, n)$ which sends the space $\BiLag(A,B;n)$ of $n$-shifted bi-Lagrangian structures on $B$ over  $A$ to the space   $\Lag(A^{\oL,\comm},B^{\oL,\comm};n)$ of $n$-shifted Lagrangian structures on their commutative quotients. 
\end{lemma}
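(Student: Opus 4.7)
The plan has two stages: first construct the natural map of spaces via functoriality of the Karoubi--de Rham complex, then verify that non-degeneracy is preserved by the commutative quotient.

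For the map of spaces, the quotient $A \to A^{\oL,\comm}$ is a morphism in $dg\Alg(R)$, so it induces a filtered map $\DR_{\cyc}(A/R) \to \DR_{\cyc}(A^{\oL,\comm}/R)$ of Karoubi--de Rham complexes. For any commutative DGCA $B$ over $R$ with $\Q\subset R$, antisymmetrisation in each degree gives a natural filtered quasi-isomorphism $\DR_{\cyc}(B/R) \simeq \DR(B/R)$ identifying the Karoubi--de Rham complex with the usual commutative de Rham complex (a classical fact, immediate over $\Q$ once one checks that commutators in the tensor algebra act as antisymmetrisers). Composing, passing to $F^2$, and applying Dold--Kan denormalisation produces the desired natural map $\PreBiSp(A,n) \to \PreSp(A^{\oL,\comm},n)$. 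The same construction applied to the relative cone defining bi-isotropic structures furnishes the map $\BiIso(A,B;n) \to \Iso(A^{\oL,\comm}, B^{\oL,\comm};n)$.

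For the second stage, write $B = A^{\oL,\comm}$. The standard base-change identity for cotangent complexes in its non-commutative form gives $\oL\Omega^1_{B/R} \simeq B \ten^{\oL}_A \oL\Omega^1_{A/R} \ten^{\oL}_A B$ as a $B$-bimodule, whence by duality
\[
\DDer_R(A,A^{e}) \ten^{\oL}_{A^{e}} B^{e} \simeq \Der_R(B,B),
\]
and the reduced contraction $\iota_\phi(\omega_2)$ is compatible with this identification after quotienting by commutators. Since $\oL\Omega^1_{A/R}$ is perfect over $A^{e}$, base change along $A^{e} \to B^{e}$ preserves quasi-isomorphisms between such modules, so $\omega_2^{\sharp}$ being a quasi-isomorphism forces the image $\bar\omega_2^{\sharp}$ to be one too. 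This shows that $\BiSp(A,n)$ maps into $\Sp(A^{\oL,\comm},n)$.

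The bi-Lagrangian statement follows by an entirely parallel argument applied to the defining cone
\[
\cone(\DDer(B,B^{e}) \to \DDer_R(A,B^{e})) \lra (\Omega^1_{B/R})_{[-n]},
\]
which base-changes to the commutative Lagrangian non-degeneracy diagram; cones of quasi-isomorphisms remain quasi-isomorphisms, so non-degeneracy passes through. The main obstacle I expect is careful bookkeeping of the commutator quotient and its interaction with the inner versus outer $A^{e}$-bimodule structures on the various tensor products; this is precisely the step where the assumptions $\Q\subset R$ (to implement the antisymmetrisation identification) and perfection of $\oL\Omega^1_{A/R}$ (to justify the base-change quasi-isomorphism) are genuinely used.
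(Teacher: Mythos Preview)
Your overall strategy matches the paper's: construct a filtered map from the cyclic de Rham complex to the commutative de Rham complex, then check that non-degeneracy base-changes correctly. However, both stages contain inaccuracies worth flagging.

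In Stage 1, the paper is more direct: it simply observes that abelianising the filtered DGAA $\DR(A/R)$ (i.e.\ killing the \emph{ideal} generated by commutators) lands in the commutative de Rham algebra of $A^{\comm}$, and this factors through $\DR_{\cyc}(A/R)$ because the commutator subspace lies in that ideal. Your route via $\DR_{\cyc}(A^{\oL,\comm}/R)$ works too, but the claimed filtered quasi-isomorphism $\DR_{\cyc}(B/R) \simeq \DR(B/R)$ for commutative $B$ is neither needed nor justified as stated; you only need the natural surjection to the commutative de Rham complex, and that is all the paper uses.

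In Stage 2 your base-change formula is wrong. The map $A \to A^{\comm}$ is not formally \'etale, so you cannot identify $\oL\Omega^1_{B/R}$ with $B\ten^{\oL}_A\oL\Omega^1_{A/R}\ten^{\oL}_A B$ in any useful sense. What is true (and what the paper uses) is that the \emph{commutative} cotangent module satisfies $\Omega^1_{A^{\comm}/R} \cong \Omega^1_A \ten_{A^{e}} A^{\comm}$, because $(-)^{\comm}$ is a left adjoint. Correspondingly the correct duality statement tensors along $A^{e} \to A^{\comm}$, not $A^{e} \to B^{e}$: one has $\DDer_R(A,A^{e})\ten^{\oL}_{A^{e}} A^{\comm} \simeq \Der_R(A^{\comm},A^{\comm})$, and applying $-\ten^{\oL}_{A^{e}}A^{\comm}$ to the quasi-isomorphism $\omega_2^{\sharp}$ then yields the commutative non-degeneracy. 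With these corrections your argument is the paper's argument.
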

\begin{proof}
 We may assume that $A$ is cofibrant, and then we just observe that there is a natural map from the filtered DGAA $\DR(A/R)$ to the commutative de Rham algebra of $A^{\comm}$, given by killing the commutator ideal. Since the latter contains the commutator, the map factors through  $\DR_{\cyc}(A/R)$, giving our required map $\PreBiSp(A,n) \to \PreSp(A^{\comm},n)$. In particular, the commutative cotangent module $\Omega^1_{A^{\comm}/R}$ is given by $(\Omega^1_A)\ten_{A^{e}}A^{\comm}$, so the non-degeneracy condition of Definition \ref{bisymplecticdef} induces the required quasi-isomorphism between $\Omega^1_{A^{\comm}/R}$ and its shifted $A^{\comm}$-linear dual. 
 
 The isotropic statements then follow in exactly the same way.
\end{proof}

\subsection{Shifted bisymplectic structures on derived Artin NC prestacks}\label{spartinsn} 
 
The following is \cite[Definition \ref{NCstacks-NCprestdef}]{NCstacks}:
\begin{definition}
  The model category $DG^+\Aff^{nc}(R)^{\wedge}$ of derived NC prestacks  is defined to be the category of simplicial set-valued functors on $dg_+\Alg(R)$, equipped with the left Bousfield localisation of the projective model structure at morphisms of the form $f^* \co \oR \Spec^{nc} B \to \oR\Spec^{nc} A$, for quasi-isomorphisms $A \to B$, where  $\oR\Spec^{nc} A= \map_{dg_+\Alg(R)}(A,-)$.
\end{definition}
As in \cite[\S 2.3.2]{hag1},
the homotopy category $\Ho(DG\Aff^{nc}(R)^{\wedge})$ is equivalent to the category of weak equivalence classes of weak equivalence-preserving simplicial functors on $dg_+\Alg(R)$, since fibrant objects are those prestacks which preserve weak equivalences and are objectwise fibrant.

For the purposes of this section, we will   restrict to the smaller class of derived Artin NC prestacks introduced in \cite{NCstacks}. 
By \cite[Definition \ref{NCstacks-geomdef2}]{NCstacks}, a derived  NC prestack $F$ is said to be to be  strongly quasi-compact (sqc) $N$-geometric   NC Artin 
if it arises as the geometric realisation of simplicial diagram $X_{\bt}$ of derived NC affines whose homotopy partial matching maps satisfy various submersiveness and surjectivity conditions. A filtered colimit of open morphisms of sqc $N$-geometric   NC derived Artin prestacks, for varying $N$, is then called \cite[Definition \ref{NCstacks-inftygeomdef2}]{NCstacks} an $\infty$-geometric  NC derived Artin prestack.

\begin{definition}\label{PreBiSpArtindef}
 Given an $\infty$-geometric  NC derived Artin prestack   $F \in DG^+\Aff^{nc}(R)^{\wedge}$, we define the space $\PreBiSp(F,n)$ of $n$-shifted pre-bisymplectic structures on $F$ to be the mapping space
\[
 \PreBiSp(F,n):= \map_{ DG^+\Aff^{nc}(R)^{\wedge}}(F, \PreBiSp(-,n)),
\]
 noting that the construction of Definition \ref{PreSpdef} is clearly functorial.
 
 Similarly, given a morphism $\eta \co G \to F$ of  $\infty$-geometric  NC derived Artin prestacks, we generalise Definition \ref{Isodef} to define the space  $\BiIso(F,G;n)$ of $n$-shifted bi-isotropic structures on $G$ over $F$ to be the homotopy fibre product
\[
 \BiIso(F,G;n):=\PreBiSp(F,n)\by^h_{\PreBiSp(G,n)}\{0\},
\]
so that  bi-isotropic structures on $G$ over a fixed $n$-shifted pre-symplectic structure $\omega$ on $F$ are given by the homotopy fibre $\{\eta^*\omega\}\by^h_{\PreBiSp(G,n)}\{0\}$ of $\BiIso(F,G;n)\to \PreBiSp(F,n)$ over $\omega$.
 \end{definition}

In other words, an $n$-shifted pre-bisymplectic structure on $F$ consists of compatible   $n$-shifted pre-bisymplectic structures on $A$ for all points $x \in F(A)$ and all $A$. Note in particular that this implies $ \PreBiSp(\oR\Spec^{nc} A,n) \simeq\PreBiSp(A,n)$. More generally, if $F$ is sqc, it has a simplicial resolution $F \simeq \ho\LLim X_{\bt}$ by derived NC affines, then calculating the relevant homotopy limits shows that  $\PreBiSp(F,n)$ can be described by the formulae of Definition \ref{PreSpdef}, replacing $F^p\DR_{\cyc}(A/R)$ with the product total complex $\Tot^{\Pi}(i \mapsto F^p\DR_{\cyc}(X_i/R))$.

Extending the definition of bisymplectic structures to derived  NC prestacks is not so straightforward, because the construction $A \mapsto \DDer_A(A,A^{e})$  has very limited functoriality, with knock-on effects on non-degeneracy conditions, which is why have restricted to Artin prestacks in order to make the following work. Although Definition \ref{PreBiSpArtindef} can naturally be extended to arbitrary derived  NC prestacks, beware that it can be too crude; we will introduce a subtler definition in \S \ref{hgsBiSpsn} which agrees with it in the Artin case.

\begin{definition}
 Given an $A^{e}$-module $M$ and a strictly positive integer  $p$, write $(M^{\ten p})_{\cyc}$ for the image of $M^{\ten p}$ in the cyclic quotient $T_A(M)/[T_AM,T_AM]$ of the tensor algebra.
 \end{definition}
Thus $\Omega^p_{A,\cyc}\cong((\Omega^1_A)^{\ten p})_{\cyc}$; as in the proof of \cite[Proposition 4.1.2]{vdBerghDoublePoisson}, $(M^{\ten p})_{\cyc}$ can be obtained by writing $p$ copies of $-\ten_AM\ten_A-$ in a circle, then taking the quotient by the obvious action of $C_p$.
 
The following is now an immediate consequence of the description of the cotangent complex $\bL_F$ in the proof of \cite[Lemma \ref{NCstacks-cotgood}]{NCstacks}: 
\begin{lemma}
For an $\infty$-geometric  NC derived Artin prestack $F$, the natural maps
\[
 \oR\Gamma(F, (\bL_F^{\ten p})_{\cyc}) \to  \oR\Gamma(F, \oL\Omega^p_{\sO_F,\cyc} ), 
\]
induced by the maps $x^*\bL_F \to \oL\Omega^1_A$ for $x \in F(A)$, are quasi-isomorphisms, where $\sO_F(\oR\Spec^{nc} B \to F):= B$.
 \end{lemma}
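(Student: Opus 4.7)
The plan is to deduce the claim from descent for $\oR\Gamma(F,-)$ along a simplicial atlas, combined with the case $p=1$ treated in the proof of \cite[Lemma \ref{cotgood}]{NCstacks}.

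First, since an $\infty$-geometric NC derived Artin prestack is by definition a filtered colimit of open morphisms between sqc $N$-geometric ones, and since filtered colimits of open immersions commute with $\oR\Gamma$ and preserve the termwise quasi-isomorphism property, I reduce to the sqc $N$-geometric case. Next, choose a simplicial resolution $F \simeq \ho\LLim_i X_i$ by derived NC affines $X_i = \oR\Spec^{nc} A_i$. Quasi-coherent descent then identifies both sides with product total complexes along the resolution: the right-hand side is $\Tot^{\Pi}(i \mapsto \oL\Omega^p_{A_i/R,\cyc})$, while the left-hand side is $\Tot^{\Pi}(i \mapsto ((x_i^*\bL_F)^{\ten p})_{\cyc})$, with tensors taken over the $A_i^{e}$-bimodule structure.

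The level-$i$ comparison is induced by the transitivity triangle $x_i^*\bL_F \to \oL\Omega^1_{A_i/R} \to \bL_{A_i/F}$, which is generally not a termwise quasi-isomorphism: for Artin prestacks the relative term $\bL_{A_i/F}$ is non-trivial (supported in degrees dual to the stacky directions of $F$, as is visible already for classifying stacks). Filtering the cofibre of the $p$-fold cyclic comparison by the number of factors replaced by $\bL_{A_i/F}$, I express it as an iterated extension of cyclic tensor powers in which at least one factor is $\bL_{A_i/F}$, so the task reduces to showing that each such mixed cyclic cosimplicial object is $\Tot^{\Pi}$-acyclic.

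The main obstacle is precisely this vanishing, which is the direct generalisation to mixed cyclic tensors of the $p=1$ argument in the proof of \cite[Lemma \ref{cotgood}]{NCstacks}. Along a geometric atlas of an NC Artin prestack, the cosimplicial object $i \mapsto \bL_{A_i/F} \ten_{A_i^{e}} M_i$ has acyclic product total complex for any cosimplicial $A_\bt^{e}$-module $M_\bt$ that is pulled back from a quasi-coherent sheaf on $F$; this follows from an extra-degeneracy argument exploiting the submersiveness of the coface maps in the atlas (the same bookkeeping as in the cited proof). Granted the single-factor vanishing, induction on the number of $\bL_{/F}$-factors in each summand of the filtration, using perfectness of $\bL_F$ to keep the successive quotients within the class of permissible cosimplicial coefficients, closes the argument.
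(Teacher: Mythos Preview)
Your strategy of passing to a simplicial atlas and filtering the cofibre is sound in outline, but the induction sketched in the final paragraph does not close. The graded pieces of your filtration are cyclic tensors in which each slot is either $x_i^*\bL_F$ or $\bL_{A_i/F}$, with at least one slot of the latter type. Your single-factor vanishing lemma --- acyclicity of $\Tot^{\Pi}(i \mapsto \bL_{A_i/F}\ten_{A_i^{e}} M_i)$ for $M_\bt$ pulled back from a sheaf on $F$ --- only handles the pieces with \emph{exactly one} $\bL_{A_i/F}$-factor, since then the remaining tensor power of $x_i^*\bL_F$ is genuinely pulled back. When two or more $\bL_{A_i/F}$-factors are present, peeling one off leaves a coefficient that still contains $\bL_{A_i/F}$, which is \emph{not} pulled back from $F$, so your hypothesis no longer applies. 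Perfectness of $\bL_F$ is of no help here, because the obstruction lives in the $\bL_{A_i/F}$-factors, not the $\bL_F$-factors; nor does an extra degeneracy on $\bL_{A_\bt/F}$ alone propagate through an arbitrary tensor with another cosimplicial module.

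The paper does not attempt a direct filtration argument of this kind. It simply invokes the description of $\bL_F$ established in the proof of \cite[Lemma~\ref{cotgood}]{NCstacks}: in the model constructed there, the pullbacks of $\bL_F$ along the atlas are identified with $\oL\Omega^1$ as bimodules compatibly with the cosimplicial structure, so the statement for $p$th cyclic tensor powers is immediate from that identification rather than from a comparison of obstruction terms. If you want to rescue your approach, you would need either to strengthen the vanishing to arbitrary cosimplicial $A_\bt^{e}$-modules $M_\bt$ (not just those descended from $F$), or to exhibit an extra degeneracy that is natural in the full cosimplicial bimodule structure so that it tensors through all factors simultaneously.
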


That equivalence  then gives us  a natural map $\pi_0\PreBiSp(F,n) \to \H^{n+2}\oR\Gamma(F, (\bL_F^{\ten p})_{\cyc})$ 
by sending $\omega$ to $\omega_2$, allowing us to generalise Definition \ref{bisymplecticdef} as follows:

\begin{definition}\label{bisymplecticdef2}
Given an $\infty$-geometric  NC derived Artin prestack $F$,    we say that an $n$-shifted pre-bisymplectic structure $\omega$ on $F$ is $n$-shifted bisymplectic if the cotangent complex $\bL_F$ is perfect as an $\sO_F^{\oL,e}$-module, and the induced map
\[
 \omega_2^{\sharp}\co \co \oR\HHom_{A^{\oL,e}}(\bL_{F,x} ,A^{\oL,e}) \to (\bL_{F,x} )_{[-n]}
\]
is a quasi-isomorphism for all $A$ and all $x \in F(A)$.

We then let $\BiSp(F,n) \subset \PreBiSp(F,n)$ consist of the $n$-shifted bisymplectic structures --- this is a union of path components.

Similarly, given a morphism $\eta \co G \to F$ of  $\infty$-geometric  NC derived Artin prestacks, we say that a bi-isotropic structure $\lambda$ on $G$ relative to an  $n$-shifted bisymplectic structure $\omega$ on $F$ is bi-Lagrangian if 
$\bL_G$ is perfect as an $\sO_G^{\oL,e}$-module, and the induced map
\[
 (\eta \circ \omega_2^{\sharp}, \lambda_2^{\sharp}) \co \cone( \oR\HHom_{A^{\oL,e}}(\bL_{G,x} ,A^{\oL,e})  \to \oR\HHom_{A^{\oL,e}}(\bL_{F,\eta(x)} ,A^{\oL,e})) \to   (\bL_{G,x})_{[-n]}.
\] 
is a quasi-isomorphism for all $A$ and all $x \in G(A)$.

We then let $\BiLag(F,G;n) \subset \BiIso(F,G;n)$ consist of the $n$-shifted bi-Lagrangian structures --- this is a union of path components.

\end{definition}

Note that if $F^{\comm}$ denotes the restriction of $F$ to CDGAs, then  Lemma \ref{commSplemma} combines with a comparison of commutative and non-commutative cotangent complexes to  imply that we have a natural map  $\BiSp(F,n) \to \Sp(F^{\comm, \sharp},n)$ to the space of symplectic structures from \cite{poisson}, where $(-)^{\sharp}$ denotes \'etale hypersheafification. The following proposition implies that we also have natural maps $\BiSp(F,n) \to \Sp( (\Pi_{\Mat_r}F)^{\comm ,\sharp},n)$ giving  symplectic structures on all of the representation spaces, for the Weil restriction functor $\Pi_{\Mat_r}$ of \cite[\S \ref{NCstacks-weilrestrn}]{NCstacks}.

\begin{proposition}\label{weilArtinprop}
 If  $S$ is a finite flat Frobenius $R$-algebra, then for any $\infty$-geometric  NC derived Artin prestack $F$, there is a natural map
\[
 \BiSp(F,n) \to \BiSp(\Pi_{S/R}F,n)
\]
from the space of $n$-shifted bisymplectic structures on $F$ to the space of $n$-shifted bisymplectic structures on $\Pi_{S/R}F$, where $\Pi_{S/R}F(B):=F(B\ten_RS)$.

For any morphism $G \to F$ of $\infty$-geometric  NC derived Artin prestacks, there is similarly a natural map
\[
 \BiLag(F,G;n) \to \BiLag(\Pi_{S/R}F,\Pi_{S/R}G;n)
\]
from the space of $n$-shifted bi-Lagrangian structures on $G$ over $F$ to the space of $n$-shifted bi-Lagrangian structures on $\Pi_{S/R}G$ over $\Pi_{S/R}F$.
 \end{proposition}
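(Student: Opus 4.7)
The approach is to produce the map on pre-bisymplectic spaces via a Frobenius trace on cyclic de Rham complexes, and then transfer non-degeneracy using the self-duality inherent in a Frobenius structure.

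By Definition \ref{PreBiSpArtindef}, producing the map $\PreBiSp(F,n)\to\PreBiSp(\Pi_{S/R}F,n)$ reduces, via $\Pi_{S/R}F(B)=F(B\ten_RS)$, to constructing naturally in $B\in dg_+\Alg(R)$ a filtered $R$-linear morphism of cochain chain complexes
\[
\tr_!\co F^2\DR_{\cyc}(B\ten_R S/R)\lra F^2\DR_{\cyc}(B/R).
\]
If $\tau\co S\to R$ denotes the Frobenius form, I would define this on pure tensors $a_i=b_i\ten s_i$ (with $b_i\in B$, $s_i\in S$) in $A:=B\ten_R S$ by
\[
\tr_!(a_0\,da_1\cdots da_p)\;=\;\tau(s_0 s_1\cdots s_p)\,b_0\,db_1\cdots db_p,
\]
extended $R$-linearly. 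The cyclic symmetry $\tau(st)=\tau(ts)$ of the Frobenius form makes this well defined on the cyclic quotient $\Omega^p_{A/R,\cyc}$, and a direct verification shows it commutes with $d$ and $\delta$ and preserves the Hodge filtration $F^{\bullet}$.

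To ensure non-degeneracy is preserved, I would identify the cotangent complex of the Weil restriction. Since a square-zero extension of $B$ by a $B$-bimodule $M$ gives, on tensoring with $S$, a square-zero extension of $A$ by $M\ten_R S$, for $x\in\Pi_{S/R}F(B)=F(A)$ there is a natural quasi-isomorphism
\[
\oR\HHom_{B^{\oL,e}}(\bL_{\Pi_{S/R}F,x},M)\;\simeq\;\oR\HHom_{A^{\oL,e}}(\bL_{F,x},M\ten_R S).
\]
Combined with the Frobenius self-duality $S\cong\Hom_R(S,R)$, this yields a canonical quasi-isomorphism
\[
\oR\HHom_{B^{\oL,e}}(\bL_{\Pi_{S/R}F,x},B^{\oL,e})\;\simeq\;\oR\HHom_{A^{\oL,e}}(\bL_{F,x},A^{\oL,e}),
\]
under which contraction with $\tr_!(\omega_2)$ corresponds to contraction with $\omega_2$. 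Consequently, non-degeneracy of $\omega$ on $F$ forces non-degeneracy of $\tr_!(\omega)$ on $\Pi_{S/R}F$, so the induced map sends $\BiSp(F,n)$ into $\BiSp(\Pi_{S/R}F,n)$.

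For the bi-Lagrangian statement, one applies the same $\tr_!$ to the cone complex in Definition \ref{Isodef} defining $\BiIso$, and the relative version of the cotangent-complex identification (for the morphism $G\to F$) transports the cone non-degeneracy of Definition \ref{bisymplecticdef2} to $\Pi_{S/R}G\to\Pi_{S/R}F$. The main technical obstacle is the verification that the Frobenius self-duality isomorphism identifies the two contraction maps: this requires tracking how the bimodule structure $A^{\oL,e}\cong B^{\oL,e}\ten_R S^{\oL,e}$ interacts with the double-derivation formalism of Definition \ref{contractiondef}, and how the Frobenius form compatibly dualises both the inner and the outer bimodule structures. Once this compatibility is in hand, the preservation of non-degeneracy is essentially formal.
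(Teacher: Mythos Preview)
Your construction of the trace map $\tr_!$ on cyclic de Rham complexes is correct and matches the paper's: the paper phrases it as the composite of the natural map $\Omega^p_{(B\ten_RS)/R}\to \Omega^p_{B/R}\ten_RS$ (induced by $d_B\ten\id_S$ and multiplication in $S$) with the Frobenius trace $S\to R$, noting that the trace kills commutators so the map descends to the cyclic quotient. Your explicit formula computes exactly this.

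The gap is in your non-degeneracy argument. The displayed equivalence
\[
\oR\HHom_{B^{\oL,e}}(\bL_{\Pi_{S/R}F,x},B^{\oL,e})\;\simeq\;\oR\HHom_{A^{\oL,e}}(\bL_{F,x},A^{\oL,e})
\]
is false in general. Your first identification (via square-zero extensions) gives the left-hand side as $\oR\HHom_{A^{\oL,e}}(\bL_{F,x},B^{\oL,e}\ten_R S)$, but $A^{\oL,e}\cong B^{\oL,e}\ten_R S^{e}$, and $S$ is not isomorphic to $S^{e}=S\ten_R S^{\op}$ as an $S^{e}$-module (already for $S=\Mat_n$ the ranks differ). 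Frobenius self-duality $S\cong S^*$ does not bridge this.

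The paper's fix is to keep track of both $S^e$-actions separately. From your adjunction one gets $\bL_{\Pi_{S/R}F,x}\simeq \bL_{F,x}\ten^{\oL}_{S^{e}}S^*$. The original $\omega_2^{\sharp}$ is a quasi-isomorphism $\bL_{F,x}\to \oR\HHom_{B^{\oL,e}\ten_R S^{e}}(\bL_{F,x}, B^{\oL,e}\ten_R S^{e})$; applying $(-)\ten^{\oL}_{S^e}S^*$ and rewriting the target gives
\[
\bL_{\Pi_{S/R}F,x}\lra \oR\HHom_{B^{\oL,e}}(\bL_{F,x}\ten^{\oL}_{S^{e}}S,\,B^{\oL,e}).
\]
Only now does the Frobenius isomorphism $S^*\cong S$ enter, identifying $\bL_{F,x}\ten^{\oL}_{S^{e}}S$ with $\bL_{\Pi_{S/R}F,x}$, so that the composite is the required quasi-isomorphism $\bL_{\Pi_{S/R}F,x}\to \oR\HHom_{B^{\oL,e}}(\bL_{\Pi_{S/R}F,x}, B^{\oL,e})$, and one checks this agrees with $\rho(\omega)_2^{\sharp}$. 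The bi-Lagrangian case then follows by the same bookkeeping applied to the cone.
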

\begin{proof}
We will prove the first statement; the proof of the second is very similar.

By \cite[Lemma \ref{NCstacks-stweillemma2}]{NCstacks}, we know that $\Pi_{S/R}F$ is $\infty$-geometric, so the space of bisymplectic structures thereon is well-defined. For any $B \in dg_+\Alg(R)$, there is a natural $(B\ten_RS)^{\oL,e}$-linear map
\[
 \Omega^1_{(B\ten_RS)/R}\to \Omega^1_{B/R}\ten_RS,
\]
corresponding to the derivation $d_B \ten \id_S$ on $B\ten_RS$. Combined with the multiplication on $S$, this gives morphisms $\Omega^p_{(B\ten_RS)/R}\to \Omega^p_{B/R}\ten_RS $ which combine with the trace $\tr \co S \to R$ to give  morphisms $\Omega^p_{(B\ten_RS)/R}\to \Omega^p_{B/R}$. Since the trace of a Frobenius algebra kills commutators, this descends to a filtered morphism $\DR_{\cyc}(B\ten_RS  /R)\to \DR_{\cyc}(B/R)$.

Applying this functorially to elements $x \in  \Pi_{S/R}F(B)$, this gives us a map $\rho \co \PreBiSp(F,n) \to \PreBiSp(\Pi_{S/R}F,n)$ of shifted pre-bisymplectic structures, and we need only show that it preserves the bisymplectic structures. Writing $S^*:=\Hom_R(S,R)$, We begin by observing that the cotangent complex of  $\Pi_{S/R}F$ is given at $x$ by 
\[
 \bL_{\Pi_{S/R}F,x} \simeq \bL_{F,x}\ten^{\oL}_{S^{e}}S^*. 
\]
since for any $B^{\oL,e}$-module $M$, we have
\begin{align*}
 \oR\HHom_{B^{\oL,e}}(\bL_{\Pi_{S/R}F,x},M) &\simeq \oR\HHom_{B^{\oL,e}\ten_R S^{e}}(\bL_{F,x},M\ten_RS)\\
 &\simeq \oR\HHom_{B^{\oL,e}\ten_R S^{e}}(\bL_{F,x},\HHom_R(S^*,M))\\
 &\simeq \oR\HHom_{B^{\oL,e}}(\bL_{F,x}\ten^{\oL}_{S^{e}}S^*,M).
\end{align*}

A shifted bisymplectic structure $\omega$  on $F$ then gives a quasi-isomorphism $\omega_2^{\sharp} \co \bL_{F,x} \to \oR\HHom_{B^{\oL,e}\ten_R S^{e}}(\bL_{F,x}, B^{\oL,e}\ten_R S^{e})$, and hence
\[
\bL_{\Pi_{S/R}F,x}\to \oR\HHom_{B^{\oL,e}\ten_R S^{e}}(\bL_{F,x}, B^{\oL,e}\ten_R S^*)\simeq \oR\HHom_{B^{\oL,e}}(\bL_{F,x}\ten^{\oL}_{S^{e}}S, B^{\oL,e}).
\]
The map $\rho(\omega)_2^{\sharp}$ then composes this with the isomorphism $S^* \to S$ induced by the trace ($S$ being Frobenius) 
to give the required  quasi-isomorphism
\[
 \bL_{\Pi_{S/R}F,x}\to \oR\HHom_{B^{\oL,e}\ten_R S^{e}}(\bL_{F,x}, B^{\oL,e}\ten_R S)\simeq \oR\HHom_{B^{\oL,e}}(\bL_{\Pi_{S/R}F,x}, B^{\oL,e}).\qedhere
\]
\end{proof}

\subsection{The \tps{$2$}{2}-shifted bisymplectic structure on \tps{$\Perf$}{Perf}\ }

As in \cite[\S \ref{NCstacks-dmodperfsn}]{NCstacks}, an example of an $\infty$-geometric derived Artin  NC prestack is given by the moduli prestack $\Perf\co dg_+\Alg(R) \to s\Set$ of perfect complexes, which sends an $R$-DGAA $B$ to the nerve of the $\infty$-category of perfect right $B$-modules in complexes.

\begin{definition}\label{HCdef}
For $B \in dg_+\Alg(R)$ flat over $R$, we denote the Hochschild differential by $\mathsf{b} \co \Omega^p_{B/R} \to \Omega^{p-1}_{B/R}$. As in \cite[2.1.2]{GinzburgSchedlerNewCyclic}, this can be written as $\mathsf{b}(\alpha da)= (-1)^p [\alpha,a]$, for $\alpha \in \Omega^{p-1}_{B/R} $ and $\alpha \in B$.  We also denote Connes' differential by $\mathsf{B} \co \Omega^p_{B/R} \to \Omega^{p+1}_{B/R}$. 

For $u$ of cohomological degree $-2$, 
  we then write  $\mathbf{HN}_{R,\bt}(B)$ for the complex 
\[
 (\prod_{p \in \Z} u^{-p}F^p\DR(B/R), \mathsf{B}\pm u^{-1}\mathsf{b}\pm \delta) = ((\bigoplus_{p \ge 0} u^{-p}\Omega^p_{B/R})[-p]\llb u \rrb, \mathsf{B}\pm u^{-1}\mathsf{b}\pm \delta), 
\]
where we set $F^p\DR:=\DR$ for $p < 0$.
Adapting \cite[Application 9.8.4]{W} to dg algebras, this is Connes' construction of the complex calculating negative cyclic homology $\HN_*(B)$ of $B$ over $R$, once we note that $( u^{-*}\Omega^*_{B/R}[-*], u^{-1}\mathsf{b}\pm \delta)$ is the normalised Hochschild complex $\bar{\CC}_{R,\bt}(B)$.
%
%
\end{definition}

Note that on taking the quotient by $u$, we have $\mathbf{HN}_{R,\bt}(B)/u\mathbf{HN}_{R,\bt}(B) \cong (\prod_{p \ge 0} u^{-p}\Omega^p_{B/R}, u^{-1}\mathsf{b}\pm \delta)$, which is the normalised Hochschild homology complex $\bar{\CC}_{R,\bt}(B)$, 
the non-negativity of the grading on   $B$ meaning that this infinite product is just a direct sum. The correct generalisation of Definition \ref{HCdef} to arbitrary   $B \in dg\Alg(R)$ involves a more subtle mixture of direct sums and products.

\begin{lemma}\label{goodwillielemma}
Up to coherent homotopy,  the Goodwillie--Jones Chern character gives  a natural map 
\[
\ch^-\co K(B)_{\Q} \to \mathbf{HN}_{R,\bt}(B)
\]
lifting the Dennis trace,
where $K(-)_{\Q}$ denotes the $\Q$-linearisation of the $K$-theory spectrum.
 \end{lemma}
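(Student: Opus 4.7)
The plan is to invoke the standard Goodwillie--Jones Chern character, extended from ordinary associative rings to DGAAs via the observation that both $K$-theory and negative cyclic homology are localising invariants of small stable $\infty$-categories, and so depend on $B$ only through $\Perf_B$.

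First I would describe the map objectwise. For a perfect right $B$-module $P$, Morita invariance gives $\HN_*(\oR\End_B(P)) \simeq \HN_*(B)$, and one associates to $P$ the Hochschild class represented by $\tr(\id_P)$: at the chain level this is the image of $\id_P$ under $\oR\End_B(P) \to \oR\End_B(P)/[-,-]$ composed with the Morita identification. The key point, going back to Goodwillie and Jones, is that this Hochschild class admits a canonical and essentially unique lift through the $u$-filtration to a class in $\H^0\mathbf{HN}_{R,\bt}(B)$, because the cyclic symmetry inherent in the trace construction automatically witnesses the required compatibilities at each stage of the tower $\cdots \to \mathbf{HN}_{R,\bt}(B)/u^{p+1} \to \mathbf{HN}_{R,\bt}(B)/u^{p} \to \cdots$.

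Next I would assemble this objectwise assignment into a natural map of spectra after $\Q$-linearisation. Both $K(-)_{\Q}$ and $\mathbf{HN}_{R,\bt}(-)$ are additive (indeed localising) $\Q$-linear invariants of small stable $\infty$-categories evaluated at $\Perf_B$, so the trace-of-identity construction extends by additivity from objects to a natural transformation of such invariants, yielding the desired map $\ch^{-} \co K(B)_{\Q} \to \mathbf{HN}_{R,\bt}(B)$, with naturality in $B$ flowing from functoriality of $\Perf$. That $\ch^{-}$ lifts the Dennis trace is tautological from the construction: composition with the canonical projection $\mathbf{HN}_{R,\bt}(B) \to \mathbf{HN}_{R,\bt}(B)/u \cong \mathbf{HC}_{R,\bt}(B)$ followed by further projection to the Hochschild-degree-$0$ piece sends $[P]$ to $\tr(\id_P)$, which is the Dennis trace by definition.

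The main obstacle is making the coherent-homotopy claim rigorous, i.e.~producing a genuine map of spectra rather than a map on $\pi_0$. I would handle this by working in the $\infty$-category of $\Q$-linear localising invariants, where the trace-of-identity transformation on $\Perf_B$ automatically lifts by the universal property of rationalised $K$-theory (equivalently, by the fact that both sides are functorially determined by the dg category $\Perf_B$). A more concrete alternative is to use an explicit simplicial model of Connes' cyclic complex at the chain level and verify directly that the trace-of-identity lifts assemble into a simplicial map; this is the essence of the classical Goodwillie--Jones construction, and no essentially new phenomena arise in passing from ordinary rings to DGAAs.
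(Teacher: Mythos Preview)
Your proposal is workable in principle, but takes a considerably more elaborate route than the paper. The paper's proof is a two-line reduction: via the Dold--Kan and Eilenberg--Zilber correspondences one passes from $dg_+\Alg(R)$ to simplicial $R$-algebras, and then simply cites Goodwillie's original construction of the Chern character $K(-)_{\Q} \to \mathbf{HN}$ for simplicial rings. No new argument is needed; the lemma is essentially a pointer to the literature after a standard model-categorical translation.

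Your approach instead works through the modern framework of localising invariants of stable $\infty$-categories (\`a la Blumberg--Gepner--Tabuada), invoking Morita invariance and a universal property to assemble the trace-of-identity into a map of spectra. This is a legitimate alternative, and indeed the paper later uses exactly this perspective (citing Tabuada) when generalising to dg categories in Theorem~\ref{Perfthm2}. However, a few of your steps are imprecise as stated: the phrase ``universal property of rationalised $K$-theory'' needs care (it is non-connective $K$-theory that is the universal localising invariant, and one must check what survives after rationalisation), and ``extends by additivity from objects to a natural transformation'' elides the actual content of the BGT-style argument. For this particular lemma, where $B$ is just a DGAA in non-negative degrees, the paper's direct reduction to Goodwillie's classical construction is both shorter and avoids these subtleties.
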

\begin{proof}
Via the Dold--Kan and Eilenberg--Zilber correspondences, we may replace $dg_+\Alg$ with the category of simplicial $R$-algebras, and the desired map is then the Goodwillie--Jones Chern character of \cite{goodwillieChern}.
\end{proof}

Combined with Lemma \ref{goodwillielemma}, the following gives a refinement of the Karoubi Chern character of \cite[\S\S1.17--1.24]{karoubiHomologieCyclique}: 
\begin{lemma}\label{KaroubiChernLemma}
Passing to the quotient $\DR_{\cyc}$ of $\DR$ gives a natural map
\[
\xi \co  \mathbf{HN}_{R,\bt}(B) \to \prod_{p \in \Z} F^p\DR_{\cyc}(B/R)[2p].
\]
\end{lemma}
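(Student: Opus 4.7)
The plan is to define $\xi$ componentwise via the canonical quotient $\Omega^p_{B/R} \twoheadrightarrow \Omega^p_{B/R,\cyc}$ in each de Rham degree, and then to verify that this quotient kills the Hochschild contribution to the total differential; everything else is bookkeeping with conventions.

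First I would note that the quotient map $\DR(B/R) \twoheadrightarrow \DR_{\cyc}(B/R)$ respects the Hodge filtration, because the commutator of an element of pure de Rham degree $i$ with one of pure de Rham degree $j$ lies in de Rham degree $i+j$, so $[F^i,F^j] \subset F^{i+j}$. Applying this componentwise and keeping track of the formal variable $u$ produces a morphism
\[
\xi^{\#}\co \prod_{p \in \Z} u^{-p} F^p \DR(B/R) \lra \prod_{p \in \Z} u^{-p} F^p \DR_{\cyc}(B/R)
\]
of underlying bigraded objects; identifying the multiplication by $u^{-p}$ with the cohomological shift $[2p]$ (since $u$ has cohomological degree $-2$) puts the target in the form stated in the lemma.

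The main step is chain-map compatibility with the differential $d \pm u^{-1}\mathrm{b} \pm \delta$. Both $d$ and $\delta$ act as derivations of the DGAA structure on $\DR(B/R)$, so they preserve the two-sided commutator ideal and descend to $\DR_{\cyc}(B/R)$, giving the expected components $d$ and $\delta$ of the target differential. The essential observation is that the Hochschild summand $u^{-1}\mathrm{b}$ descends to zero: by the formula $\mathrm{b}(\alpha \, da) = (-1)^p [\alpha, a]$ for $\alpha \in \Omega^{p-1}_{B/R}$ and $a \in B$, the operator $\mathrm{b}$ takes values in $[\DR(B/R),\DR(B/R)]$, which is precisely the subspace killed when we pass to $\DR_{\cyc}$. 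Hence $\xi^{\#}$ intertwines the source differential with the target differential $d \pm \delta$, giving the required chain map $\xi$.

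Naturality in $B$ is automatic, since both the de Rham algebra $\Omega^{\bt}_{-/R}$ and the quotient by commutators are functorial in DGAA morphisms, and the formal variable $u$ plays no role in this functoriality. The only point that goes beyond bookkeeping is the vanishing of $\mathrm{b}$ on the cyclic quotient, which is a direct consequence of the cited formula from \cite{GinzburgSchedlerNewCyclic}; I do not expect any obstruction in the argument.
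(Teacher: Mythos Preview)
Your proposal is correct and follows essentially the same route as the paper: the key (and only substantive) point is that $\mathrm{b}$ lands in commutators by the formula $\mathrm{b}(\alpha\,da)=(-1)^p[\alpha,a]$, hence vanishes on passing to $\DR_{\cyc}$, so the differential $d\pm u^{-1}\mathrm{b}\pm\delta$ becomes $d\pm\delta$. The paper's own proof is a one-sentence compression of exactly this argument.
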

\begin{proof}
Since $\mathsf{b}$ is defined as a commutator, it vanishes on $\DR_{\cyc}$. Moreover, $\mathsf{B}$ descends to a map $ \Omega^p_{\cyc}(B/R) \to \Omega^{p+1}_{\cyc}(B/R)$ on the cyclic quotient, where it equals $(p+1)d$.  Thus the differential $\mathsf{B}+u^{-1}\mathsf{b}\pm \delta$ becomes $(p+1)d \pm \delta$ on $\Omega^p_{\cyc}(B/R)$, giving us the required map
 $ \mathbf{HN}_{R,\bt}(B)\to \prod_{p \in \Z} u^{-p}F^p\DR_{\cyc}(B/R)$ if we divide $\Omega^p_{B/R}$ by $(p!)$.  
\end{proof}

\begin{proposition}\label{HNsympprop1}
 There is a natural map from the Dold--Kan denormalisation of the good truncation of $ \mathbf{HN}_{R,\bt}(A)_{[d]}$ to the space of $(2-d)$-shifted pre-bisymplectic structures on $A$, and hence from $\HN_d(A)$ to equivalence classes of $(2-d)$-shifted pre-bisymplectic structures on $\oR \Spec^{nc} A$. 
 
The pre-bisymplectic structure thus associated to  an element $\alpha \in \HN_d(A)$ is bisymplectic if and only if its image $\bar{\alpha} \in \HH_d(A) = \H_d(A\ten^{\oL}_{A^{\oL,e}}A)$ in Hochschild homology is non-degenerate in the sense that
the composite map
 \[
\DDer_R(A,A^{\oL,e})\xra{u} \oR\HHom_{A^{\oL,e}}(A,A^{\oL,e})[1] \xra{\ev_{\bar{\alpha}}}  A[1-d] \xra{u^*} \Omega^1_A[2-d], 
\]
is a quasi-isomorphism, where the  maps $u, u^*$ come from the resolution $\cone(\Omega^1_A \to A^{\oL,e})$ of $A$ as a bimodule, and the middle map is given by allowing $\oR\HHom_{A^{\oL,e}}(A,A^{\oL,e})$ to act on the first factor of  $\bar{\alpha} \in \H_d(A\ten^{\oL}_{A^{\oL,e}}A)$.
\end{proposition}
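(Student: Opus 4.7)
The plan is to construct the natural map by composing the Karoubi Chern character $\xi$ of Lemma \ref{KaroubiChernLemma} with projection onto the $p=2$ Hodge filtration piece, and then to identify the resulting contraction map $\omega_2^{\sharp}$ with the composite in the statement by way of the bimodule resolution of $A$.

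For the existence of the map, I would take $\xi$ and project to the $p=2$ factor of $\prod_{p\in\Z}F^p\DR_{\cyc}(A/R)[2p]$, obtaining a natural chain map $\mathbf{HN}_{R,\bt}(A)\to F^2\DR_{\cyc}(A/R)[4]$; after shifting by $[d]$ this becomes $\mathbf{HN}_{R,\bt}(A)_{[d]}\to F^2\DR_{\cyc}(A/R)[4-d]$. Passing to good truncations and Dold--Kan denormalisations then produces a map of simplicial sets. The target is canonically weakly equivalent to the space $\PreBiSp(A,2-d)$ of Definition \ref{PreSpdef} by the standard Sullivan comparison between polynomial de Rham forms on simplices and the Dold--Kan construction applied to $\tau^{\le 0}(F^2\DR_{\cyc}(A/R)^{[4-d]})$. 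On $\pi_0$ this recovers the map $\HN_d(A)\to\H^{4-d}F^2\DR_{\cyc}(A/R)$ to equivalence classes of $(2-d)$-shifted pre-bisymplectic structures.

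For the non-degeneracy characterisation, I would take a representative cocycle $\sum_{p\ge 0}u^{-p}\eta_p$ of $\alpha$, so that $\eta_0\in A$ represents $\bar\alpha$ and the image of $\eta_2$ in $\Omega^2_{A/R,\cyc}$ is $\omega_2$. The Maurer--Cartan identities for a cocycle in $\mathbf{HN}_{R,\bt}$, in particular $d\eta_1+u^{-1}\mathrm{b}\eta_2+\delta\eta_2=0$ and $d\eta_0+u^{-1}\mathrm{b}\eta_1+\delta\eta_1=0$, link $\eta_2$ to $\eta_0$ up to a Hochschild boundary. The two-term bimodule resolution $[\Omega^1_A\into A^{\oL,e}]\simeq A$ computes $\oR\HHom_{A^{\oL,e}}(A,A^{\oL,e})$ as $[A^{\oL,e}\to\DDer_R(A,A^{\oL,e})]$ with $\DDer$ in cohomological degree one, yielding the map $u$; the boundary $A\to\Omega^1_A[1]$ arising from the same resolution gives $u^*$. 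The pairing $\ev_{\bar\alpha}$ is then the standard one, letting an $A^{\oL,e}$-linear map $A\to A^{\oL,e}$ act on the first tensor factor of a representative of $\bar\alpha\in\H_d(A\ten^{\oL}_{A^{\oL,e}}A)$.

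The central technical step is the chain-level identification of $\omega_2^{\sharp}(\phi)=\iota_\phi(\omega_2)$ with $u^*(\ev_{\bar\alpha}(u(\phi)))$ for $\phi\in\DDer_R(A,A^{\oL,e})$. Unwinding the reduced contraction of Definition \ref{contractiondef} against a representative of $\eta_2$, and using the Hochschild cocycle identity above to rewrite this modulo a $\mathrm{b}$-boundary in terms of $\eta_0$, one recognises the result as the image of $\phi$ under pairing with $\bar\alpha$ followed by the boundary map $u^*$. The principal obstacle is the careful bookkeeping of signs and cyclic symmetrisation in the passage between $\DR$ and $\DR_{\cyc}$, since $\mathrm{b}$ vanishes on $\DR_{\cyc}$ whereas both the contraction formulas and the Hochschild pairing live most naturally on $\DR$; once the identification is established, Definition \ref{bisymplecticdef} directly yields the claimed equivalence between non-degeneracy of $\bar\alpha$ and the bisymplectic property of $\omega$.
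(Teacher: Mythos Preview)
Your construction of the map via projection of $\xi$ onto the $p=2$ factor is exactly what the paper does, and the passage to $\PreBiSp(A,2-d)$ is correct.

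For the non-degeneracy statement, your overall strategy is right and matches the paper's, but there is a slip in your bookkeeping. When you write a negative-cyclic cocycle as $\sum_{p\ge 0} u^{-p}\eta_p$ with $\eta_p\in\Omega^p$, the image $\bar\alpha$ in Hochschild homology is \emph{not} represented by $\eta_0\in A$ alone: the Hochschild complex is $\mathbf{HN}/u\mathbf{HN}\simeq(\bigoplus_p\Omega^p,\mathrm{b}\pm\delta)$, and $\bar\alpha$ is represented by the entire collection $(\eta_0,\eta_1,\eta_2,\ldots)$ as a $\mathrm{b}$-cycle. Relatedly, your displayed ``Maurer--Cartan identities'' mix the Hodge grading with the $u$-adic grading; as written, $d\eta_1$, $\mathrm{b}\eta_2$ and $\delta\eta_2$ do not live in the same graded piece, so those equations are not the correct cocycle relations.

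The paper's route avoids this entirely. Rather than trying to pass from $\eta_2$ to $\eta_0$ via cocycle identities, it simply observes that the composite $\mathbf{HN}_{R,\bt}(A)\to F^2\DR_{\cyc}\to\Omega^2_{\cyc}$ factors through $\mathbf{HN}/u\mathbf{HN}$ (since $u\mathbf{HN}$ lands in $F^3$ at the $p=2$ slot), so $\omega_2$ is determined by $\bar\alpha$. The identification of $\omega_2^\sharp$ with $u^*\circ\ev_{\bar\alpha}\circ u$ then follows directly from analysing the reduced contraction $\iota_\phi$ on the Hochschild representative, the key point being that $\mathrm{b}$-closedness of $\bar\alpha$ forces $\iota_\phi(\bar\alpha)=0$ whenever $\phi$ lies in the image of $A^{\oL,e}\to\DDer_R(A,A^{\oL,e})$ (the inner derivations). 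This is what makes the contraction descend to the two-term resolution and produces the maps $u$ and $u^*$. Your approach via explicit cocycle relations would reach the same conclusion if the indexing were corrected, but the paper's factorisation argument is cleaner and sidesteps the combinatorics.
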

\begin{proof}
 The map from the negative cyclic homology space to the space of shifted pre-bisymplectic structures is given by the map $\xi_2$ of Lemma \ref{KaroubiChernLemma}, which  projects $\xi$ onto the $p=2$ factor. 
 
 It follows by construction that the composite map
 \[
   \mathbf{HN}_{R,\bt}(A)\to u^{-p}F^p \DR_{\cyc}(A/R) \to u^{-p}\Omega^p_{\cyc}(A/R)
 \]
factors through the Hochschild homology complex $ \mathbf{HN}_{R,\bt}(A)/u \mathbf{HN}_{R,\bt}(A)$, so non-degeneracy of $\alpha$ depends only on its image $\bar{\alpha} \in \HH_d(A)$. The description of the composite above then follows from an analysis of the contraction map, with closure of $\alpha$ under the Hochschild differential ensuring that $ \iota_{\phi}(\bar{\alpha})=0$ for any $\phi$ in the image of the natural map $A^{\oL,e} \to \DDer_R(A,A^{\oL,e})$. 
 \end{proof}

 \begin{remark}\label{EHCrmk}
  By functoriality, Proposition \ref{HNsympprop1} gives a morphism $\oR\Gamma(F, \mathbf{HN}_{\bt}(\sO_F)[-d]) \to \PreBiSp(F,2-d)$ for any derived NC prestack $F$, with the non-degenerate condition then phrased by replacing $\oL\Omega^1_A$ with the cotangent complex $L_{F,x}$ of $F$, for each $x \in F(A)$. 
  
  For instance, the adjoint action of units $B^{\by}$ in $B$ on $\Hom_{dg_+\Alg(R)}(A,B)$ gives rise to a derived  NC prestack $\oR[\Spec^{nc} A/\bG_m] $ as the right-derived functor, and then for the canonical map $u \co [\oR\Spec^{nc} A \to \oR\Spec^{nc} A/\oR \bG_m] $, we have $\bL_{[\oR\Spec^{nc} A/\oR \bG_m],u} \simeq A_{[1]}$, regarded as the homotopy fibre of $\oL\Omega^1_A \to A^{\oL,e}$, so the non-degeneracy condition in this case would be a quasi-isomorphism between $\oR\HHom_{A^{\oL,e}}(A,A^{\oL,e})[d-2]$ and $A$. The relevant negative cyclic homology complex  $\oR\Gamma( [\oR\Spec^{nc} A/\oR \bG_m], \mathbf{HN}_{\bt}(\sO_{[\oR\Spec^{nc} A/\oR \bG_m]}))$ is an integrated form of the negative extended cyclic homology of \cite{GinzburgSchedlerNewCyclic}, with variables $x^{\pm}$ in place of a formal variable $t=\log x$.
 \end{remark}

 \begin{example}\label{exlocsys}
  Given a reduced simplicial set $(X,x)$, we can form the simplicial loop group $G(X,x)$ as in \cite{loopgp}, linearise to give a simplicial algebra $R.G(X,x)$, and then apply Dold--Kan normalisation and the Eilenberg--Zilber correspondence to give $NR.G(X,x) \in dg_+\Alg(R)$. Then $\oR\Spec^{nc} NR.G(X,x)$ is the derived moduli functor of framed rank $1$ local systems on $(X,x)$, sending $B \in dg_+\Alg(R)$ to the $\infty$-groupoid of locally constant $B$-modules $M$ over $X$ with framings $M_x \simeq B$. Similarly, $\oR[\Spec^{nc} NR.G(X,x)/\bG_m]$ is a derived moduli functor of unframed rank $1$ local systems, sending $B$ to the $\infty$-groupoid of those locally constant $B$-modules $M$ over $X$ which are locally isomorphic to $B$.
  
 Reasoning as in \cite[\S 5.1]{BravDyckerhoff}, a non-degenerate negative cyclic homology class of degree $d$ for the derived NC prestack $\oR[\Spec^{nc} NR.G(X,x)/\bG_m]$  then arises from a class $[X] \in \H_d(X)$ inducing a Poincar\'e duality isomorphism $\H^*(X,R) \cong \H_{d-*}(X,R)$, thus giving us a  $(2-d)$-shifted pre-bisymplectic structure on derived NC moduli of rank $1$ local systems.
   \end{example}

The following is a non-commutative analogue of \cite[Theorem 0.3]{PTVV}, though the bulk of the proof, establishing the non-degeneracy condition, differs substantially because the formula for the Chern character asserted in the proof of  \cite[Theorem 0.3]{PTVV} does not make sense in non-commutative cases.
\begin{theorem}\label{Perfthm1}
 There is a canonical $2$-shifted bisymplectic structure on the derived Artin NC prestack $\Perf$.
 \end{theorem}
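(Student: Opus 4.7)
The plan is to construct the bisymplectic structure from the Chern character of the universal perfect complex, then verify non-degeneracy via a Morita reduction. The universal perfect module on $\Perf$ defines, for each $B\in dg_+\Alg(R)$ and each point $x\in\Perf(B)$, a canonical $K_0$-class; by functoriality in $B$ and Lemma \ref{goodwillielemma}, this yields a natural class in $\H^0\oR\Gamma(\Perf, \mathbf{HN}_{\bt}(\sO_{\Perf}))$. Composing with the projection $\xi_2$ of Lemma \ref{KaroubiChernLemma} onto the $F^2\DR_{\cyc}[4]$-summand produces a class in $\H^4\oR\Gamma(\Perf, F^2\DR_{\cyc}(\sO_{\Perf}))$, and the Artin-prestack extension of Proposition \ref{HNsympprop1} recorded in Remark \ref{EHCrmk} then packages this into a canonical $2$-shifted pre-bisymplectic structure $\omega$ on $\Perf$.

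For non-degeneracy, fix $B\in dg_+\Alg(R)$ and a point $x\in\Perf(B)$ corresponding to a perfect right $B$-module $M$. First I would identify $\bL_{\Perf,x}$ as a $B^{\oL,e}$-module: a deformation-theoretic unpacking via square-zero bimodule extensions gives $\bL_{\Perf,x}\simeq (M\otimes^{\oL}_R M^{\vee})_{[1]}$, where $M^{\vee}:=\oR\HHom_B(M,B)$ carries its natural left $B$-action. This bimodule is perfect, so Definition \ref{bisymplecticdef2}'s perfectness hypothesis is automatic, and its $B^{\oL,e}$-dual is equivalent to $(M^{\vee}\otimes^{\oL}_R M)_{[-1]}$. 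The Hochschild class $\bar{\omega}_2\in \HH_0(\sO_{\Perf})$ extracted at $x$ by the Hochschild-degeneration arm of Proposition \ref{HNsympprop1} is, by construction of the Chern character as a refined Dennis trace, the image of the identity $[1_M]\in\HH_0(\oR\End_B(M))$ under the Morita trace $\HH_*(\oR\End_B(M))\to\HH_*(B)$.

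Under these identifications, the map $\omega_2^{\sharp}$ should be recognised as the tautological evaluation--coevaluation pairing induced by $[1_M]$, which is a quasi-isomorphism precisely because $M$ is perfect over $B$. The main obstacle is verifying this identification cleanly, since the recipe of Proposition \ref{HNsympprop1} passes through a cone resolution of $A$ and an indirect contraction map, so matching the result with the trace pairing requires unwinding the Karoubi--Connes construction carefully at the level of bimodules. I would handle this by first treating the diagonal case $M=B$, where the statement reduces to showing that $1\in\HH_0(B)$ induces the identity on $\bL_{\Perf,x}[-2]$ after the standard cancellations, then extending to general perfect $M$ via Morita invariance of Hochschild and cyclic homology together with naturality of the Chern character along the unit morphism $B\to\oR\End_B(M)$.
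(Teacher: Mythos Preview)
Your construction of the pre-bisymplectic structure via the Goodwillie--Jones Chern character composed with the projection $\xi_2$ matches the paper's exactly, as does your identification of $\bL_{\Perf,x}$ and your recognition of $\bar\omega_2$ as the Dennis trace of $M$ (the image of $\id_M$ under the trace map from $\HH_0(\oR\End_B(M))$).

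The non-degeneracy verification is where the approaches differ. The paper does not reduce to $M=B$; instead it introduces the non-commutative Atiyah class $\At_E \co \DDer_R(B,B^{\oL,e}) \to \oR\HHom_{B^{\oL,e}}(\bL_{\Perf,[E]},B^{\oL,e})$ and its adjoint $\At_E^*$, and shows that the affine-level map $\omega_2^{\sharp}$ factors as $\At_E^* \circ \sigma \circ \At_E$, where $\sigma$ is the tautological duality quasi-isomorphism $E\ten_R^{\oL}E^*[1] \simeq (E^*\ten_R^{\oL}E[-1])[2]$. Since $\At_E$ and $\At_E^*$ are precisely the structural maps between $\oL\Omega^1_B$ and $\bL_{\Perf,[E]}$, this factorisation gives non-degeneracy at the prestack level directly. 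The key identification $\ev_{\bar\omega_2} \simeq i_E^* \circ \sigma \circ i_E$ is obtained by recognising $\bar\omega_2$ as the Lefschetz map $\cL_{B/R}(\id_E)$ of \cite{BresslerNestTsygan}, which is exactly the formalisation of your ``Morita trace''.

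Your Morita-reduction plan has two weaknesses. First, the phrasing is imprecise: for a general perfect $M$ the unit $B \to \oR\End_B(M)$ is not a Morita equivalence ($M$ need not generate), so ``Morita invariance'' does not literally apply --- what you actually need is the transfer/Lefschetz map, which exists without any generation hypothesis. Second, and more substantively, reducing to $M=B$ does not simplify the step you correctly flag as the obstacle --- unwinding the cone resolution and contraction map from Proposition~\ref{HNsympprop1} --- because that unwinding is identical at the point $[B]$ and at a general $[M]$. The paper's Atiyah-class factorisation is precisely the clean uniform argument that handles all $E$ at once, and is what you should substitute for the final paragraph of your proposal.
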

\begin{proof}
We have a natural map $\Perf(B) \to K(B)$ from  perfect complexes to $K$-theory. Composing it with the Karoubi and Goodwillie--Jones Chern characters of 
 Lemma \ref{KaroubiChernLemma} and Proposition \ref{HNsympprop1} then gives us  a natural map
\[
 \xi_2 \circ \ch^-\co \Perf(B) \to \PreBiSp(B,2)
\]
 to the space of  $2$-shifted pre-bisymplectic structures. Since this is functorial, it defines an element $\omega$ of   $\PreBiSp(\Perf,2)$ in the sense of Definition \ref{PreBiSpArtindef}.

It therefore remains only to show that this pre-bisymplectic structure is bisymplectic. If we take  a point $[E] \in \Perf(B)$ corresponding to a perfect complex $E$ of right $B$-modules, then as in \cite[Remark \ref{NCstacks-Perfnicermk}]{NCstacks}, the  tangent complex is given by
\[
 \oR\HHom_{B^{\oL,e}}(\bL_{\Perf,[E]},B^{\oL,e})\simeq\oR\HHom_B(E, E\ten_RB)[1]\simeq E\ten_R^{\oL}E^*[1],
\]
 where $B$ acts on both terms on the right and $E^*=\oR\HHom_B(E,B)$. Thus, $E$ being perfect, $\bL_{\Perf,[E]}$ is the predual $E^*\ten_R^{\oL}E[-1]$,  so there is  a natural quasi-isomorphism  $\sigma \co \oR\HHom_{B^{\oL,e}}(\bL_{\Perf,[E]},B^{\oL,e})\to \bL_{\Perf,[E]}[2]$.

The map $[E] \co \oR \Spec^{nc} B \to \Perf$ induces a natural map 
\[
 \At_E \co \oR\HHom_{B^{\oL,e}}(\oL\Omega^1_B,B^{\oL,e}) \to \oR\HHom_{B^{\oL,e}}(\bL_{\Perf,[E]},B^{\oL,e})
\]
  on tangent complexes, which we think of as the non-commutative Atiyah class, and this has a corresponding  adjoint map $\At^*_E \co\bL_{\Perf,[E]} \to  \oL\Omega^1_B$. In order to show that $\omega$ is bisymplectic, it suffices to show that 
\[
 \omega_2^{\sharp}\co  \oR\HHom_{B^{\oL,e}}(\oL\Omega^1_B,B^{\oL,e}) \to \oL\Omega^1_B[2]
\]
is naturally homotopic to the composition $\At^*_E \circ \sigma \circ \At_E$, since $\sigma$ is a quasi-isomorphism.

We next observe that $\At_E$ factorises as the composite
\[
 \oR\HHom_{B^{\oL,e}}(\oL\Omega^1_B,B^{\oL,e}) \xra{u} \oR\HHom_{B^{\oL,e}}(B,B^{\oL,e})[1] \xra{i_E} 
 \oR\HHom_B(E, E\ten_RB)[1],
\]
for $u$ as in Proposition \ref{HNsympprop1} and $i_E$ given by $E\ten_B-$. Similarly, we have $\At_E^* \simeq u^*\circ i_E^*$, where $i_E^* \co E^*\ten_R^{\oL}E[-1] \to B[-1]$ is given by evaluating $E^*$ on $E$. Thus
\[
 \At^*_E \circ \sigma \circ \At_E \simeq u^*\circ (i_E^* \circ \sigma \circ i_E) \circ u.
\]

 Since the image  $\bar{\omega}_2$ of $\omega_2$ in Hochschild homology is just given by the Dennis trace of $E$, it follows from 
\cite[\S 4.3]{BresslerNestTsygan} that   $\bar{\omega}_2=\cL_{B/R}(\id_E)$ for  the \emph{Lefschetz map} $\cL_{B/R}$,  given by the composite
\[
 \oR\HHom_B(E, E) \simeq (E\ten^{\oL}_R E^*)\ten^{\oL}_{B^{\oL,e}} B \xra{i_E^* \ten \id} B \ten^{\oL}_{B^{\oL,e}} B.     
\]         
Thus $\ev_{\bar{\omega}} \co \oR\HHom_{B^{\oL,e}}(B,B^{\oL,e}) \to B$ is given by $i_E^* \circ \sigma \circ i_E$, so 
$ 
\omega_2^{\sharp} \simeq \At^*_E \circ \sigma \circ \At_E
$, 
as required.
\end{proof}

\subsection{Shifted bisymplectic structures on derived NC prestacks with good obstruction theory} \label{spcotsn}

Proving that a given derived NC prestack is Artin is far harder than in the commutative setting, because of the lack of descent, so we now show how to extend the formulation of bi-symplectic structures to any derived NC prestacks $F$  with well-behaved obstruction theory and cotangent complexes, so as to  include all natural derived NC moduli functors. This involves a small \'etale site of stacky derived NC affines over $F$, and is formulated in such a way as to be consistent with \S \ref{spartinsn}.

We begin by recalling some definitions from \cite[\S \ref{NCstacks-stackysn}]{NCstacks}

\begin{definition}
A stacky DGAA over a CDGA $R_{\bt}$ is  an associative  $R$-algebra $A^{\bt}_{\bt}$ in  chain cochain complexes. We write $DGdg\Alg(R)$ for the category of  stacky DGAAs over $R$, and $DG^+dg\Alg(R)$ (resp. $DG^+dg_+\Alg(R)$) for the full subcategory consisting of objects $A$ concentrated in non-negative cochain degrees (resp. non-negative bidegrees).
\end{definition}
When working with  cochain chain complexes $V^{\bt}_{\bt}$, recall that we  usually denote the chain differential by $\delta \co V^i_j \to V^i_{j-1}$, and the cochain differential by $\pd \co V^i_j \to V^{i+1}_j$.

\begin{example}\label{BGex1}
As in \cite[Example \ref{NCstacks-DstarBGex}]{NCstacks}, stacky DGAAs can arise as non-commutative analogues of Lie algebroids. For instance, for $B \in dg_+\Alg(R)$,  completing the prestack $\oR [\Spec^{nc} B/\bG_m] $ of Remark \ref{EHCrmk} along $\oR \Spec^{nc} B$ corresponds to looking at the stacky DGAA $O([\Spec^{nc} B/\g_m]) := (B\<s\>, \pd)$, where $s$ has cochain degree $1$, with cochain differential $\pd b = sb-bs$ for $b \in B$, and $\pd s = s^2$. 

In general, there is a similar bar construction $[\Spec^{nc} B/\g]$  whenever a finite  non-unital associative algebra $\g$ acts on $A$ via double derivations in a suitable way. In the example above, $\g_m$ is just the base ring $R$, whereas the infinitesimal form of $\bG_a$ in the non-commutative context is the $R$-module $R$ with zero multiplication.
\end{example}

We say that a morphism  $A \to B$ of stacky DGAAs is a levelwise quasi-isomorphism if the maps $A^i \to B^i$ are quasi-isomorphisms of chain complexes for all $i \in \Z$. 

There is a Dold--Kan denormalisation functor $D$ from non-negatively graded DGAAs to cosimplicial associative algebras; it has a left adjoint, which we denote by $D^*$.  For most practical purposes, the functor $D^*$ can be understood by remembering that it sends the tensor algebra on a cosimplicial space $V$ to the tensor algebra on the cosimplicial normalisation $N_cV$. 

\begin{definition}
 Given a chain cochain complex $V$, define the cochain complex $\hatTot V \subset \Tot^{\Pi}V$ as a subcomplex of the product total complex by
\[
(\hatTot V)^m := (\bigoplus_{i < 0} V^i_{i-m}) \oplus (\prod_{i\ge 0}   V^i_{i-m})
\]
with differential $\pd \pm \delta$. 
\end{definition}
The key property of the semi-infinite total complex $\hatTot$ is that it sends levelwise quasi-isomorphisms in the chain direction to quasi-isomorphisms; the same is not true in general of the sum and product total complexes $\Tot, \Tot^{\Pi}$, cf. \cite[\S 5.6]{W}.
The functor $\hatTot$ is referred to as Tate realisation in \cite{CPTVV}.

\begin{definition}\label{hatHomdef}
 Given a stacky DGAA $A$ and $A$-modules $M,N$ in chain cochain complexes, we define the chain cochain complex 
$\cHom_A(M,N)$  by 
\[
 \cHom_A(M,N)^i_j=  \Hom_{A^{\#}_{\#}}(M^{\#}_{\#},N^{\#[i]}_{\#[j]}),
\]
with differentials  $\pd f:= \pd_N \circ f \pm f \circ \pd_M$ and  $\delta f:= \delta_N \circ f \pm f \circ \delta_M$,
where $V^{\#}_{\#}$ denotes the bigraded vector space underlying a chain cochain complex $V$. 

We then define the  $\Hom$ complex $\hatHHom_A(M,N)$ by
\[
 \hatHHom_A(M,N):= \hatTot \cHom_A(M,N).
\]
\end{definition}
Note that $\hatTot$ is lax monoidal with respect to tensor products, which means in particular that   there is a multiplication $\hatHHom_A(M,N)\ten_R \hatHHom_A(N,P)\to \hatHHom_A(M,P)$  (the same is not true for $\Tot^{\Pi} \cHom_A(M,N)$ in general).

\subsubsection{Structures on stacky affines}\label{stackyDRsn}

Given a stacky $R$-DGAA $A$, there is an $A$-bimodule $\Omega^1_A:= \ker(A\ten_RA \to A)$ exactly as for DGAAs, except that we now have double complexes instead of chain complexes. There are then triple complexes $\Omega^{\bt}_A$ and $\Omega^{\bt}_{\cyc}$ defined exactly as in \S \ref{spaffsn}, but with an additional grading, and we now adapt the definition of bisymplectic structures similarly.

For the purposes of this section, we now fix a stacky DGAA $A \in DG^+dg\Alg(R)$ which is  cofibrant in the model structure of \cite[Lemma \ref{NCstacks-bidgaamodel}]{NCstacks}, though this can be relaxed  along similar lines  to  Remark \ref{weakerassumptionrmk}.


%

\begin{definition}\label{biDRdef}
Define the de Rham complex $\DR(A/R)$ and   cyclic de Rham complex $\DR_{\cyc}(A/R)$ to be the product total cochain complexes of the double cochain complexes 
\begin{align*}
& \Tot^{\Pi}  A \xra{d} \Tot^{\Pi} \Omega^1_{A/R} \xra{d} \Tot^{\Pi} \Omega^2_{A/R}\xra{d} \ldots,\\
 & \Tot^{\Pi}(A/[A,A]) \xra{d} \Tot^{\Pi}\Omega^1_{A/R,\cyc} \xra{d} \Tot^{\Pi}\Omega^2_{A/R,\cyc}\xra{d} \ldots,
\end{align*}
so the total differential is $d \pm \delta \pm \pd$.

We define the Hodge filtration $F$ on  $\DR(A/R)$ (resp. $\DR_{\cyc}(A/R)$) by setting    $F^p\DR(A/R) \subset \DR(A/R)$ (resp. $F^p\DR_{\cyc}(A/R) \subset \DR_{\cyc}(A/R)$) to consist of terms $\Omega^i_{A/R}$ (resp. $\Omega^i_{A/R,\cyc}$) with $i \ge p$.
\end{definition}

We use the notation $\sigma^{\le q}$ to denote the brutal cotruncation in the cochain direction
\[
 (\sigma^{\le q}M)^i := \begin{cases} 
                         M^i & i \ge q, \\ 0 & i<q.
                        \end{cases}
\]
\begin{definition}\label{bisymplecticdefstacky}
Define an $n$-shifted pre-bisymplectic structure $\omega$ on $A/R$ to be an element
\[
 \omega \in \z^{n+2}F^2\DR_{\cyc}(A/R).
\]

 If the chain complexes    $ (\Omega^1_{A/R}\ten_{A^{\oL,e}}(A^0)^{\oL,e})^i$ are acyclic for all $i >q$ and $\Tot \sigma^{\le q} (\Omega^1_{A/R}\ten_{A^{\oL,e}}(A^0)^{\oL,e})$ is quasi-isomorphic  to a  perfect  $(A^0)^{\oL,e}$-module, then we say that an $n$-shifted pre-bisymplectic structure $\omega$ is bisymplectic if contraction with $\omega_2 \in \z^n\Tot^{\Pi}\Omega^2_{A/R,\cyc}$
 induces a quasi-isomorphism
\[
\omega_2^{\sharp}\co \hatHHom_{A^{\oL,e}}(\Omega^1_A, (A^0)^{\oL,e})[-n]\to  \Tot^{\Pi} (\Omega_{A/R}^1\ten_{A^{\oL,e}}(A^0)^{\oL,e}) _{[-n]}. 
\]
\end{definition}

\begin{example}\label{BGex2}
 As in Example \ref{BGex1}, the non-commutative analogue of a Lie algebra is a finite flat  non-unital associative $R$-algebra $\g$. We then think of the bar construction of $\g$ as being functions $O(B\g)$ on the nerve of $\g$, so  $O(B\g)^n :=(\g^*)^{\ten n}$, with the obvious multiplication and with differential $\pd \co \g^* \to \g^*\ten \g^*$ dual to the multiplication.
 
Then $\DR(O(B\g))$ is a tensor algebra generated by copies of $\g^*$ in cochain degrees $1$ and $2$, so if $R=\H_0R$ then $\Tot^{\Pi} \Omega^p_{O(B\g)} $ is concentrated in degrees $\ge 2p$, meaning that a $2$-shifted pre-bisymplectic structure is just an element $\omega \in \Omega^2_{O(B\g),\cyc}$ of degree $2$ with $d\omega =\pd \omega =0$.  Expanding out the definitions, this is an element  $\omega \in \Symm^2\g^*$, with the property that $[v, \omega]=0 \in (\g^*)^{\ten 2}$ for all $v \in \g$, where
\[
 [v, a\ten b]:= (v\cdot a)\ten b - a\ten (b\cdot v) +(v\cdot b)\ten a - b\ten(a\cdot v),
\]
for the natural left and right actions of $\g$ on $\g^*$ induced by multiplication in $\g$.

In other words, a  pre-bisymplectic structure on $B\g$ is a symmetric pairing $\<-,-\>\co \g \ten \g \to R$ which has cyclic symmetry in the sense that $\<ab,c\>= \<b,ca\>$.

Looking at tangent and cotangent complexes, it then follows that  $2$-shifted pre-bisymplectic structure $\omega$ as above is bisymplectic if and only if the pairing is non-degenerate in the sense that it induces an isomorphism $\g \to \g^*$, i.e. if $\g$ is a non-unital Frobenius algebra.

\end{example}

\begin{definition}\label{PreSpdef2}
 Define the space $\PreBiSp(A,n)= \Lim_{p\ge 2}\PreBiSp(A,n)/F^p $ of $n$-shifted pre-bisymplectic structures on $A/R$ to be the simplicial set given in degree $k$ by setting
 \[
(\PreBiSp(A,n)/F^p)_k:= \z^{n+2}((F^2\DR_{\cyc}(A/R)/F^p)\ten_{\Q} \Omega^{\bt}(\Delta^k)),
\]
and let $\BiSp(A,n) \subset \PreBiSp(A,n)$ to consist of the bisymplectic structures --- this is a union of path-components.
\end{definition}

\begin{definition}\label{Lagdef2}
Take a morphism $f \co A \to B$ of  cofibrant stacky  $R$-DGAAs, with an  $n$-shifted pre-bisymplectic structure $\omega$ on $A/R$. We then define an bi-isotropic structure on $B$ relative to $\omega$ to be an element $(\omega, \lambda)$ of 
\[
 \z^{n+1}\cone(F^2\DR_{\cyc}(A/R)\to F^2\DR_{\cyc}(B/R)) 
\]
lifting $\omega$.

If the chain complexes    $ (\Omega^1_{A/R}\ten_{A^{\oL,e}}(A^0)^{\oL,e})^i$  and  $ (\Omega^1_{B/R}\ten_{B^{\oL,e}}(B^0)^{\oL,e})^i$  are acyclic for all $i >q$, with  $\Tot \sigma^{\le q} (\Omega^1_{A/R}\ten_{A^{\oL,e}}(A^0)^{\oL,e})$ and $\Tot \sigma^{\le q} (\Omega^1_{B/R}\ten_{B^{\oL,e}}(B^0)^{\oL,e})$ quasi-isomorphic  to  a perfect  $(A^0)^{\oL,e}$-module and a perfect $(B^0)^{\oL,e}$-module, respectively,  with $\omega$ bi-symplectic, then we say that $\lambda$ is bi-Lagrangian if contraction induces a quasi-isomorphism
\begin{align*}
 (f \circ \omega_2^{\sharp}, \lambda_2^{\sharp}) \co &\cone(\hatHHom_{B^{\oL,e}}(\Omega^1_B,(B^0)^{\oL,e}) 
 \to   \hatHHom_{A^{\oL,e}}(\Omega^1_A,(B^0)^{\oL,e}))\\ &\to   \Tot^{\Pi}(\Omega^1_{B/R}\ten_{B^{\oL,e}}(B^0)^{\oL,e})_{[-n]}.
\end{align*}

\end{definition}

 \begin{definition}\label{Isodef2}
  Given a morphism $A \to B$ of  cofibrant stacky  $R$-DGAAs, define 
  the space $\BiIso(A,B;n)=\Lim_{p\ge 2}\BiIso(A,B;n)/F^p$ of  $n$-shifted bi-isotropic structures on the pair $(A,B)$ over $R$ to be the simplicial set given in degree $k$ by setting
\[
 (\Iso(A,B;n)/F^p)_k:= \z^{n+1}(\cone(F^2\DR_{\cyc}(A/R)/F^p  \to F^2\DR_{\cyc}(B/R)/F^p)\ten_{\Q} \Omega^{\bt}(\Delta^k)).
\]

Set $\BiLag(A,B;n) \subset \BiIso(A,B;n)$ to consist of the   bi-Lagrangians on bi-symplectic structures --- this is a union of path-components.
\end{definition}

\subsubsection{Structures on derived NC prestacks}\label{hgsBiSpsn}

The following is \cite[Definition \ref{NCstacks-Dlowerdef}]{NCstacks}:
\begin{definition}
 Given a functor $F:dg_+\Alg(R) \to s\Set$, we define a functor $D_*F$ on  $DG^+dg_+\Alg(R)$ as the homotopy limit
\[
 D_*F(B):= \ho\Lim_{n \in \Delta} F(D^nB),
\]
for the cosimplicial denormalisation functor $D\co DG^+dg_+\Alg(R) \to dg_+\Alg(R)^{\Delta} $  (cf. \cite[Definition \ref{ddt1-nabla}]{ddt1}).
\end{definition}

For instance, \cite[Example \ref{NCstacks-DstarPerf}]{NCstacks} shows that  $D_*\Perf(B)$ is equivalent to the space of homotopy-Cartesian right $B$-modules $P$ in double complexes  for which $P^0$ is perfect over $B^0$, with equivalences defined levelwise in the chain direction. 

%
%
%


The following is \cite[Definition \ref{NCstacks-Frigdef}]{NCstacks}:
\begin{definition}\label{Frigdefhere}
 Given a stacky DGAA $B \in DG^+dg_+\Alg(R)$  for which the chain complexes
$
( \oL\Omega^1_B\ten^{\oL}_{B^{\oL,e}}(B^0)^{\oL,e})^i
$
are acyclic for all $i > q$, and  a  functor $F\co dg_+\Alg(R) \to s\Set$   which is homogeneous with a cotangent complex $L_F(B^0,x)$ (in the sense of \cite[Definition \ref{NCstacks-Fcotdef}]{NCstacks}) at a point $x \in F(B^0)$, we say that a point $y \in D_*F(B)$ lifting $x \in F(B^0)$ is \emph{rigid} if the induced morphism
 \[
  L_F(B^0,x)\to \Tot \sigma^{\le q} \oL\Omega^1_B\ten^{\oL}_{B^{\oL,e}}(B^0)^{\oL,e}
 \]
is a quasi-isomorphism of $B^0$-bimodules. We denote by $(D_*F)_{\rig}(B) \subset D_*F(B)$ the space of rigid points (a union of path components).
\end{definition}

The following is \cite[Definition \ref{NCstacks-hetdef}]{NCstacks}:
\begin{definition}
 A morphism  $A \to B$ in $DG^+dg\Alg(R)$ is said to be  homotopy \'etale when the maps 
\[
  (\oL\Omega_{A}^1\ten_{A^{\oL,e}}^{\oL}(B^{\oL,e})^0)^i \to (\oL\Omega_{B}^1\ten_{B^{\oL,e}}^{\oL}{B^{\oL,e}}^0)^i
\]
are quasi-isomorphisms for all $i \gg 0$, and 
\[
\Tot \sigma^{\le q} (\oL\Omega_{A}^1\ten_{A^{\oL,e}}^{\oL}(B^{\oL,e})^0) \to \{\Tot \sigma^{\le q}(\oL\Omega_{B}^1\ten_{B^{\oL,e}}^{\oL}{B^{\oL,e}}^0)
\]
is a quasi-isomorphism  for all $q \gg 0$, where $\sigma^{\le q}$ denotes the brutal cotruncation
\[
 (\sigma^{\le q}M)^i := \begin{cases} 
                         M^i & i \le q, \\ 0 & i>q.
                        \end{cases}
\]
\end{definition}

\begin{definition}
Define the $\infty$-category $\oL dg^+DG^+\Aff(R)$ by localising $DG^+dg_+\Alg(R)^{\op}$ at levelwise weak equivalences, and let $\oL dg^+DG^+\Aff(R)^{\et}$ be the full $2$-sub-$\infty$-category of $ \oL dg^+DG^+\Aff(R) $ with the same objects but only spaces of homotopy  \'etale morphisms (so $\map_{ \oL dg^+DG^+\Aff(R)^{\et}}(A,B) \subset \map_{ \oL dg^+DG^+\Aff(R)}(A,B)$ is a union of path components).
  \end{definition}


\begin{definition}\label{PreBiSphgsdef}
Given an derived NC prestack   $F \in DG^+\Aff^{nc}(R)^{\wedge}$ which is homogeneous and has bounded below cotangent complexes $L_F(B,x)$ (in the sense of \cite[Definition \ref{NCstacks-Fcotdef}]{NCstacks}) at all points $x \in F(B)$, we define 
 the spaces $\PreBiSp(F,n)$ and $\BiSp(F,n)$ of $n$-shifted pre-bisymplectic and bisymplectic structures on $F$ by
\begin{align*}
 \PreBiSp(F,n)&:= \map_{ \oL (dg^+DG^+\Aff(R))^{\wedge}}(D_*F, \PreBiSp(-,n)),\\
 \BiSp(F,n)&:= \map_{\oL (dg^+DG^+\Aff(R)^{\et})^{\wedge}}(D_*F_{\rig}, \BiSp(-,n)),
 \end{align*}
 noting that the construction of pre-bisymplectic structures from Definition \ref{PreSpdef2}  is clearly functorial, while non-degeneracy is preserved by homotopy \'etale morphisms.

Given a morphism $\eta \co G \to F$ of  homogeneous derived NC prestacks with bounded below cotangent complexes at all points, we then define
the spaces $\BiIso(F,G;n)$ and $\BiLag(F,G;n)$ of $n$-shifted bi-isotropic and bi-Lagrangian  structures on $G$ over $F$ by
\begin{align*}
 \BiIso(F,G;n)&:=  \PreBiSp(F,n)\by^h_{\PreBiSp(G,n)}\{0\},\\ 
 \BiLag(F,G;n)&:= \map_{\oL ((dg^+DG^+\Aff(R)^{[1]})^{\et})^{\wedge}}(D_*(G \to F)_{\rig}, \BiLag(-,-;n)),
 \end{align*}
where $dg^+DG^+\Aff(R)^{[1]}$ denotes the arrow category of $dg^+DG^+\Aff(R)^{[1]}$, with  $(dg^+DG^+\Aff(R)^{[1]})^{\et} $ its wide subcategory in which the only permitted morphisms from $(A \to B)$ to $(A' \to B')$ being those commutative diagrams in which the maps $A \to A'$ and $B \to B'$ are \'etale, $\BiLag(-,-;n)$ denotes the functor $(A \to B) \mapsto \BiLag(A,B;n)$, and $D_*(G \to F)_{\rig}$ denotes the functor $(A \to B) \mapsto D_*F_{\rig}(A) \by^h_{D_*F(B)}D_*G_{\rig}(B)$.
 \end{definition}

In other words, an $n$-shifted bisymplectic structure on $F$ consists of compatible   $n$-shifted bisymplectic structures on $A$ for all rigid points $x \in D_*F(A)$ and all $A$. Similarly, an $n$-shifted bi-Lagrangian structure of $G$ over $F$  consists of compatible   $n$-shifted bi-Lagrangian structures on arrows $\phi \co A \to B$  for all rigid points $x \in D_*F(A)$ and $y \in D_*G(B)$ with $\phi(x) \simeq \eta(y) \in D_*F(B)$.

\begin{remark}
At first sight, there might seem to be a discrepancy between the definitions of   $\PreBiSp(F,n)$ and  $\BiSp(F,n)$, since the latter is defined as a mapping space on a smaller category. However, \cite[Corollary \ref{NCstacks-etsitecor}]{NCstacks} ensures  that 
 \begin{align*}
  &\map_{ \oL (dg^+DG^+\Aff(R)^{\et})^{\wedge}}(D_*F_{\rig}, \PreBiSp(-,n))\\
&  \simeq \map_{\oL dg^+DG^+\Aff(R)^{\wedge}}(D_*F,\PreBiSp(-,n)), 
 \end{align*}
so the two spaces can be defined consistently, but we will wish to exploit the greater functoriality enjoyed by pre-bisymplectic structures.

 Since NC derived Artin prestacks have  bounded below cotangent complexes, we also  have to check that there is no conflict between Definitions \ref{PreBiSpArtindef} and \ref{PreBiSphgsdef}. 
That the characterisation above  coincides with $\PreBiSp(F,n)$ in the sense of  Definition \ref{PreBiSpArtindef} just follows by adapting \cite[Lemma 3.25]{poisson} to the non-commutative setting. The equivalence of the definitions of $\BiSp(F,n)$ then follows because non-degeneracy is preserved by homotopy \'etale morphisms.

The same reasoning also ensures consistency of the respective definitions for bi-isotropic and bi-Lagrangian structures.  
 \end{remark}

%


Working in this level of generality gives us the following broad generalisation of Proposition \ref{weilArtinprop}, without having to worry about whether the resulting derived NC prestacks are Artin:

\begin{proposition}\label{weilhgsprop}
 Take  a cofibrant $R$-DGAA $S \in dg_+\Alg(R)$ is perfect as an  $R$-module, and equipped with an $R$-linear chain  map $\tr \co S/[S,S] \to R_{[-d]}$ which is non-degenerate in the sense that $a \mapsto \tr(a\cdot-)$ is a quasi-isomorphism $S \to \HHom_R(S,R)[d]$.
 Then for any  homogeneous derived NC prestack $F$ with perfect cotangent complexes at all points, there is a natural map
\[
 \BiSp(F,n) \to \BiSp(\Pi_{S/R}F,n+d)
\]
from the space of $n$-shifted bisymplectic structures on $F$ to the space of $(n+d)$-shifted bisymplectic structures on the Weil restriction $\Pi_{S/R}F$, where $\Pi_{S/R}F(B):=F(B\ten_R^{\oL}S)$.

Moreover, for any morphism $G \to F$ of homogeneous derived NC prestacks with perfect cotangent complexes,
 there is similarly a natural map
\[
 \BiLag(F,G;n) \to \BiLag(\Pi_{S/R}F,\Pi_{S/R}G;n+d)
\]
from the space of $n$-shifted bi-Lagrangian structures on $G$ over $F$ to the space of $(n+d)$-shifted bi-Lagrangian structures on $\Pi_{S/R}G$ over $\Pi_{S/R}F$.
 \end{proposition}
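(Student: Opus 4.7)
The plan is to adapt the argument of Proposition \ref{weilArtinprop} to the stacky framework of \S\ref{hgsBiSpsn}, with the non-zero degree of the trace $\tr \co S/[S,S] \to R_{[-d]}$ accounting for the additional shift.

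At the affine level, for each stacky DGAA $A \in DG^+dg_+\Alg(R)$, I would first observe that the derivation $d_A \otimes \id_S$ on $A \otimes_R S$ gives an $(A \otimes_R S)^{\oL,e}$-linear map $\Omega^1_{(A \otimes_R S)/R} \to \Omega^1_{A/R} \otimes_R S$. Iterating via multiplication in $S$ then produces maps $\Omega^p_{(A \otimes_R S)/R} \to \Omega^p_{A/R} \otimes_R S$, and post-composing with $\tr$ (which factors through the cyclic quotient by hypothesis) yields a filtered morphism
\[
\tr_* \co \DR_{\cyc}((A \otimes_R S)/R) \to \DR_{\cyc}(A/R)_{[-d]}.
\]
Passage to $F^2$ and use of Definition \ref{PreSpdef2} then gives a natural map $\PreBiSp(A \otimes_R S, n) \to \PreBiSp(A, n+d)$, functorial in $A$ along homotopy \'etale morphisms, since these are stable under $-\otimes_R S$ by the construction above.

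To extend this to prestacks, I would exploit the natural equivalence $D_*(\Pi_{S/R}F)(A) \simeq D_*F(A \otimes_R^{\oL} S)$, together with the observation that $-\otimes_R S$ commutes with formation of $\oL\Omega^1$ in the sense above, hence preserves homotopy \'etaleness and interacts well with the rigidity of Definition \ref{Frigdefhere}. Composing a given natural transformation $\omega \co D_*F_{\rig} \Rightarrow \BiSp(-,n)$ with the pullback along this equivalence and with the trace morphism $\tr_*$ then produces the desired transformation $D_*(\Pi_{S/R}F)_{\rig} \Rightarrow \PreBiSp(-, n+d)$.

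The main obstacle is verifying preservation of non-degeneracy. As in Proposition \ref{weilArtinprop}, Hom--tensor adjunction applied to the perfect $R$-complex $S$ gives a natural quasi-isomorphism
\[
\bL_{\Pi_{S/R}F,\, y} \simeq \bL_{F,\, x} \otimes^{\oL}_{S^{e}} \HHom_R(S,R)
\]
whenever $x \in D_*F(A \otimes_R S)$ corresponds to $y \in D_*(\Pi_{S/R}F)(A)$. The non-degeneracy of $\tr$ supplies a quasi-isomorphism $\HHom_R(S,R) \simeq S_{[-d]}$ of $S^{e}$-modules, and combining these identifications with the quasi-isomorphism $\omega_2^{\sharp}$ at $x$ yields the quasi-isomorphism required of $(\tr_* \omega)_2^{\sharp}$, once the $d$-shift is absorbed into the target. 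The bi-Lagrangian statement is handled by running the same construction on the cocone structures of Definition \ref{Lagdef2}, using the analogous identification for the relative cotangent complex of $G \to F$.
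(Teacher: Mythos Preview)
Your proposal is correct and follows the same approach as the paper, which simply records the cotangent complex identification $\bL_{\Pi_{S/R}F,x} \simeq \bL_{F,x}\ten^{\oL}_{S^{\oL,e}}S^*$ and then says the rest follows exactly as in Proposition~\ref{weilArtinprop} with stacky DGAAs in place of DGAAs.

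One point to tighten: framing the construction through $D_*F_{\rig}$ requires that the equivalence $D_*(\Pi_{S/R}F)(A) \simeq D_*F(A\ten_R^{\oL}S)$ carry rigid points to rigid points, but this is not automatic, since rigidity of $y$ is measured against $\Omega^1_A$ while rigidity of the corresponding $x$ is measured against $\Omega^1_{A\ten_R S}$, which has an extra $\Omega^1_S$ contribution. The clean way around this, implicit in the paper's appeal to Proposition~\ref{weilArtinprop}, is to first work with the formulation $\PreBiSp(F,n)=\map(D_*F,\PreBiSp(-,n))$ over \emph{all} points (no rigidity hypothesis), construct $\rho(\omega)\in\PreBiSp(\Pi_{S/R}F,n+d)$ via the trace exactly as you describe, and then verify non-degeneracy directly at rigid points of $D_*(\Pi_{S/R}F)$ using the cotangent complex identification, which is precisely what your final paragraph does.
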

\begin{proof}
It is straightforward to check that $\Pi_{S/R} $ is homotopy-preserving and homogeneous in the sense of \cite[Definition \ref{NCstacks-Fcotdef}]{NCstacks}, and arguing as in the proof of Proposition \ref{weilArtinprop} then shows that it has cotangent complexes
\[
 \bL_{\Pi_{S/R}F,x} \simeq \bL_{F,x}\ten^{\oL}_{S^{\oL,e}}S^*, 
\]
where $S^*:=\HHom_R(S,R)$.

The rest of the proof then follows exactly as in Proposition \ref{weilArtinprop}, replacing DGAAs with stacky DGAAs.
\end{proof}

\begin{remark}\label{cyclicrmk1}
Since $S$ is assumed cofibrant in Proposition \ref{weilhgsprop}, it follows from \cite[Theorem 3.1.6]{lodayCyclic} that the complex $(S/[S,S])/R$ is quasi-isomorphic to the cone of  $ \mathbf{HC}_{R,\bt}(R) \to \mathbf{HC}_{R,\bt}(S)$ for the complex $\mathbf{HC}_{R,\bt}(S)$ calculating $R$-linear cyclic homology of $S$. 

Since $ \mathbf{HC}_{R,\bt}(R)$ is quasi-isomorphic to $R[u^{-1}]$ for $u$ of chain degree $2$, which has $R$-linear dual $R\llb u \rrb$, it follows that  the $R$-linear dual $(S/[S,S])^*$ is quasi-isomorphic to  cocone of $ \mathbf{HC}_{R,\bt}(S)^*  \to u R\llb u \rrb$, giving a long exact sequence
\[
\to  \HC_R^{-d-1}(S) \to \H_{d+1}(uR\llb u \rrb ) \to   \H_d((S/[S,S])^*)  \to  \HC_R^{-d}(S) \to \H_d( u R\llb u \rrb) \to , 
\]
thus describing 
homotopy classes  $\H_d((S/[S,S])^*)$  of chain maps $\tr \co S/[S,S] \to R_{[d]}$ in terms of cyclic cohomology $\HC_R^{i}(S)=\Ext^i_R(\mathbf{HC}_{R,\bt}(R),R)$.
\end{remark}

\begin{remark}\label{weilhgsprop2}
Proposition \ref{weilhgsprop} admits a generalisation to mapping stacks with far more general sources, going significantly beyond the generality of  \cite[Theorem 2.5]{PTVV} in the commutative setting. 

If $F$ is a homogeneous derived NC prestack, and  $Y \in dg^+DG^+\Aff(R))^{\wedge}$ a functor on stacky $R$-DGAAs, then we have a derived NC prestack 
 $\oR\Map(Y,D_*F)$ given by $\oR\Map(Y,D_*F)(B):=  \map_{dg^+DG^+\Aff(R)^{\wedge}}(Y \by \oR\Spec^{nc} B,D_*F)$; when $Y=D_*X$ for $X$ Artin, \cite[Proposition \ref{NCstacks-replaceprop}]{NCstacks} implies that this is just the mapping prestack $\oR\Map(X,F)$. Note that $\oR\Map(Y,D_*F)$ inherits homogeneity from $Y$ because it is defined as a homotopy limit.

 For any point $x \in \Map(Y,D_*F)(B)$, pulling back gives
  a  filtered morphism
\[
 \oR\Gamma(F, \DR_{\cyc}(\sO_F/R)) \to \oR\Gamma(Y,  \DR_{\cyc}(B\ten^{\oL}_R\sO_Y /R)),
\]
 which as in the proof of Proposition \ref{weilArtinprop} we can combine with the natural map.
 \[
\oR\Gamma(Y, \oL\Omega^p_{(B\ten^{\oL}_R\sO_Y)/R,\cyc})\to \oR\Gamma(\oL\Omega^p_{B/R,\cyc}\ten^{\oL}_R\oL(\sO_Y/[\sO_Y,\sO_Y])).
\]

The trace map we now need is  a $d$-cycle in  the  derived cosections complex $\hatTot \oL\Gamma(Y, \oL(\sO_Y/[\sO_Y,\sO_Y])^*)$ of the  copresheaf $\oL(\sO_Y/[\sO_Y,\sO_Y])^*$ over $Y$. For any $B^{\oL,e}\ten_R^{\oL}\sO_Y^{\oL,e}$-module $\sF$, this induces a pairing 
\begin{align*}
&\oR\hatHHom_{B^{\oL,e}\ten_R^{\oL} \sO_Y^{\oL,e}}(\sF, B^{\oL,e}\ten_R^{\oL}\sO_Y)\ten_R^{\oL} \hatTot\oR\Gamma(Y,\sF\ten_{\sO_Y^{\oL,e}}^{\oL}\sO_Y)\\
&\to \hatTot\oR\Gamma(Y, B^{\oL,e} \ten_R^{\oL} \sO_Y\ten_{\sO_Y^{\oL,e}}^{\oL}\sO_Y) \to B^{\oL,e}_{[d]}, 
\end{align*}
where the final map combines multiplication with the trace. Provided this is a perfect pairing whenever $\sF$ is perfect, the proof of Proposition \ref{weilArtinprop} adapts to give us a natural map 
\[
\BiSp(F,n) \to \BiSp(\oR\Map(Y,D_*F),n-d);
\]
in particular, duality ensures that $\Map(Y,D_*F)$ has perfect cotangent complexes
\[
\bL_{\Map(Y,D_*F),x} \simeq  \hatTot\oR\Gamma(Y, x^*\bL_F\ten_{\sO_Y^{\oL,e}}^{\oL}\sO_Y)[d].
 \]
\end{remark}


\subsection{Structures on derived Artin NC prestacks, revisited}
 
\subsubsection{Integration of formal bisymplectic structures}

\begin{lemma}\label{stacky0QIMlemma}
 For $C \in DG^+dg\Alg(R)$, let $P$ be a left $C$-module in double complexes with $P^{\#}$ cofibrant in the projective model structure on graded $C^{\#}$-modules, and for which $P^{<0}=0$, with  $\bar{P}^q:=(C^0\ten_CP)^q$ acyclic for $q >N$. Let $M,Q$ be right $C$-modules satisfying the analogous conditions Then the complex
 $\Tot^{\Pi}(P\ten_CM)$, 
  resp. $\hatHHom_C(Q,M)$, 
 is  acyclic whenever  $\Tot (\bar{M}^{\le N})$  or $\Tot (\bar{P}^{\le N})$, resp. $\Tot (\bar{Q}^{\le N})$,  is acyclic.
\end{lemma}
\begin{proof}
 Since all complexes are concentrated in non-negative cochain degrees, the filtration of $M$ by submodules $M C^{\ge i}\cong M\ten_CC^{\ge i}$ is complete, as are the induced filtrations on $\ten_C$ and $\cHom_C$. We thus have completely convergent spectral sequences 
 \begin{align*}
\H_j  \Tot^{\Pi}(\bar{P}\ten_{C^0}C^i\ten_{C^0}\bar{M})&\abuts \H_{j-i}\Tot^{\Pi}(P\ten_CM),\text{ and} \\
  \H_j\cHom_{C^0}( \bar{Q},\bar{M}\ten_{C^0}C^i )^r &\abuts \H_{j}\cHom_C(Q,M)^{r+i}.
 \end{align*}
 
 The hypotheses also give us a levelwise quasi-isomorphism $ \bar{P}\ten_{C^0}C^i\ten_{C^0}\bar{M} \to  \bar{P}^{\le N}\ten_{C^0}C^i\ten_{C^0}\bar{M}^{\le N} $, so acyclicity of  $ \Tot^{\Pi}(P\ten_CM)$ follows because $\Tot \bar{M}^{\le N}$ or  $\Tot \bar{P}^{\le N}$ is an acyclic cofibrant $C^0$-module.
 
Similarly, we have a zigzag of levelwise quasi-isomorphisms between $\cHom_{C^0}( \bar{Q},\bar{M}\ten_{C^0}C^i )$ and $\cHom_{C^0}( \bar{Q}^{\le N},\bar{M}^{\le N}\ten_{C^0}C^i )$. From the spectral sequence above, we thus have that $\cHom_C(Q,M)^r$ is acyclic for all $r<-N$ and
\[
 \H_j \Tot^{\Pi} \cHom_{C^0}( \bar{Q}^{\le N},\bar{M}^{\le N}\ten_{C^0}C^i ) \abuts \H_{j-i}\Tot^{\Pi}\cHom_C(Q,M)^{\ge -N},
\]
with $\Tot^{\Pi}\cHom_C(Q,M)^{\ge -N} \simeq \hatHHom_C(Q,M)$.
Acyclicity then follows because
\[
  \Tot^{\Pi} \cHom_{C^0}( \bar{Q}^{\le N},\bar{M}^{\le N}\ten_{C^0}C^i )\cong \HHom_{C^0}(\Tot \bar{Q}^{\le N},\Tot(\bar{M}^{\le N})\ten_{C^0}C^i). \qedhere
\]
\end{proof}

\begin{lemma}\label{calcTOmegaetlemma}
 If $f \co  A \to B$ is a homotopy \'etale cofibration between cofibrant objects in $DG^+dg\Alg(R)$, then the maps
 \begin{align*}
 \Tot^{\Pi}((\Omega^1_A)^{\ten p}\ten_{(A^e)^{\ten p}} \{B^{\ten p}\}) &\to  \Tot^{\Pi}((\Omega^1_B)^{\ten p}\ten_{(B^e)^{\ten p}} \{B^{\ten p}\}), \text{ and}\\
 \hatHHom_{(B^e)^{\ten p}}((\Omega^1_B)^{\ten p},  \{B^{\ten p}\})&\to  \hatHHom_{(A^e)^{\ten p}}((\Omega^1_A)^{\ten p},  \{B^{\ten p}\})
 \end{align*}
are quasi-isomorphisms. If moreover $f$ admits a retraction $\rho$ as an $A$-bimodule in double complexes, then the maps
 \begin{align*}
  \Tot^{\Pi}((\Omega^1_A)^{\ten p}\ten_{(A^e)^{\ten p}} \{A^{\ten p}\}) &\to  \Tot^{\Pi}((\Omega^1_A)^{\ten p}\ten_{(A^e)^{\ten p}} \{B^{\ten p}\}), \text{ and}\\
  \hatHHom_{(A^e)^{\ten p}}((\Omega^1_A)^{\ten p},  \{A^{\ten p}\})&\to\hatHHom_{(A^e)^{\ten p}}((\Omega^1_A)^{\ten p},  \{B^{\ten p}\})
 \end{align*}
are also quasi-isomorphisms.
\end{lemma}
 \begin{proof}
 The first pair of equivalences follow from the left-hand acyclicity cases of  Lemma \ref{stacky0QIMlemma}, taking $C= (B^e)^{\ten p}$, $P=Q= \coker( (\Omega^1_A)^{\ten p}\ten_{(A^e)^{\ten p}} (B^e)^{\ten p} \to (\Omega^1_B)^{\ten p})$,  and $M= \{ \tilde{B}^{\ten p}\}$ for the graded-cofibrant replacement $\tilde{B}:= \cone(\Omega^1_B \to B^e)$ of $B$ as a $B^e$-module.
 
 For the second pair of equivalences, note that we can replace $\{A^{\ten p}\}$ with $\{ \cocone(B \to B/A)^{\ten p}\}$. The retraction $\rho$ allows us to identify $B/A$ as an $A$-bilinear direct summand of $\Omega^1_{B/A}:=\ker(B\ten_AB \to B)$, where the inclusion map is $[b] \mapsto db$ and the retraction is the composite $\Omega^1_{B/A} \to B\ten_AB \xra{\rho\ten \id} B \to B/A$. The morphism $\{ \cocone(B \to B/A)^{\ten p}\} \to \{B^{\ten p}\}$ is thus a retract of $\{ \cocone(B \to \Omega^1_{B/A})^{\ten p}\} \to \{B^{\ten p}\}$, so it suffices to show that the latter map  gives rise to quasi-isomorphisms of the $\Hom$ and $\ten$ constructions above.
 
 Filtering by powers of $\Omega^1_{B/A}$, this reduces to showing that the  $(A^e)^{\ten p}$-modules $M_{q,r}:=\{\tilde{B}^{\ten q_1}\ten (\Omega^1_{B/A})^{\ten r_i} \ten \dots \ten\tilde{B}^{\ten q_m}\ten (\Omega^1_{B/A})^{\ten r_m}\}$ produce acyclic output when entered in place of  $\{A^{\ten p}\}$ in the expressions above whenever the $r_i$ are not all $0$. This follows from the right-hand acyclicity cases of  Lemma \ref{stacky0QIMlemma}, taking $C= (B^e)^{\ten p}$, $P=Q=(\Omega^1_A\ten_{A^e}B^e)^{\ten p}$ and  $M= M_{q,r}$. 
  \end{proof}


 \begin{corollary}\label{integratecorBiSp}
  For a sqc derived NC Artin prestack $F$ coming from a simplicial diagram $X_{\bt}$ of derived NC affines, the natural maps $\BiSp(F,n) \to \BiSp(D^*O(X),n)$ and $\PreBiSp(F,n) \to \PreBiSp(D^*O(X),n)$ are weak equivalences.
  \end{corollary}
\begin{proof}
The proof of \cite[Proposition \ref{NCstacks-replaceprop}]{NCstacks} resolves  the functor $D_*F$ by the simplicial stacky  derived NC affine $j \mapsto \oR \Spec^{nc}D^*O(X^{\Delta^j})$, so $\BiSp(F,n) \simeq \ho\Lim_{j \in \Delta} \BiSp(D^*O(X^{\Delta^j},n)$ and similarly for $\PreBiSp$. The simplicial operations are all homotopy \'etale, and the face maps  have sections given by  degeneracy maps, so all maps in the diagram are equivalences by the tensor cases of Lemma \ref{calcTOmegaetlemma}.
\end{proof}

In other words, every shifted bisymplectic structure on $D^*O(X)$ (an NC analogue of a derived Lie algebroid) integrates uniquely to one on $F$ (an algebraic NC analogue of a derived  Lie groupoid), in stark contrast with the commutative setting.

\subsubsection{Calabi--Yau structures via \tps{$\bG_m$}{Gm}-actions}\label{CYsn} 


We now explicitly describe shifted bisymplectic structures on the stacky DGAA $O([\Spec^{nc} A/\g_m])$ from Example \ref{BGex1}, which we can think of as the formal completion of the adjoint action of $\bG_m$ on  $\Spec^{nc} A$. Explicitly, for a stacky DGAA $B$, we have
\[
 \Hom(O([Y/\g_m]),B) \cong \{(y, b) \in Y(B^0)\by B^1 ~:~ \pd b = b^2 \in B^2, ~ \pd y = [b, y]\co A \to B^1\}.
\]

We begin by fixing some notation to simplify the calculations.

\begin{definition}\label{saddef}
 For $A \in DG^+dg\Alg(R)$, write $A\<s_{\ad}\> := O([\Spec^{nc} A/\g_m])$, the stacky DGAA $A\<s\>$ for $s \in A\<s\>^1_0$, with $\pd s = s^2$, $\delta s =0$,  and $\pd a = \pd_A a + [s,a]$ for all $a \in A$.
 
 Similarly,  for $s$ as above, write $A\<s_0\>$ for  the stacky DGAA $A\<s\>$ with $\pd_A a = 0$ for all $a \in A$. Then write $A\<s_l\>$ (resp. $A\<s_r\>$) for the $A\<s_{\ad}\>\!-\!A\<s_0\>$-bimodule (resp.  $A\<s_0\>\!-\!A\<s_{\ad}\>$-bimodule) with $\pd a =  \pd_Aa +sa$ (resp. $\pd a = \pd_Aa \pm as$) for all $a \in A$.
\end{definition}
Note that $A\<s_0\> \cong A\<s_r\>\ten_{A\<s_{\ad}}A\<s_l\>$.

Writing $\cS'(A)$ for the cocone $(\Omega^1_A \oplus (A^e)^{[-1]},\delta,\pd+ \alpha)$ of the inclusion map $\alpha \co \Omega^1_A \to A^e$ (corresponding to $\cS(A)^{[-1]}$ from \cite{yeungPreCYModReps})  and identifying it with the cocone of $[ds,-] \co \Omega^1_A \to A(ds)A$, the following then follows easily from the definitions.
\begin{lemma}\label{CYlemmaOmega}
There are canonical isomorphisms 
 \begin{align*}
  \Omega^1_{A\<s_{\ad}\>} &\cong A\<s_l\>\ten_A\cS'(A)\ten_AA\<s_r\>, \text{ and hence}\\
  \Omega^p_{A\<s_{\ad}\>} &\cong A\<s_l\>\ten_A(\cS'(A)\ten_AA\<s_0\>)^{\ten_A p} \ten_{A\<s_0\>}A\<s_r\>.
 \end{align*}
 \end{lemma}

Following \cite[\S 2.2]{yeungPreCYModReps}, write $F^p\sX^{\tot}(A)$ for the extended de Rham  complex given by  replacing $\Omega^1_A$ with $\cS'(A)$ in the definition of $\F^p\DR(A)_{\cyc}$ (\ref{biDRdef}). 
 \begin{lemma}\label{CYlemmaDR}
  For $p>0$, the complex $F^p\DR(A\<s_{\ad}\>)_{\cyc}$ is canonically quasi-isomorphic to  $F^p\sX^{\tot}(A)$. 
 \end{lemma}
\begin{proof}
 It follows from Lemma \ref{CYlemmaOmega} that $\Omega^j_{A\<s_{\ad}\>}\ten_{A\<s_{\ad}\>^e}A\<s_{\ad}\>$ is  isomorphic to the cyclic tensor product $\ldots \ten_A \cS'(A)\ten_AA\<s_0\>\ten_A\cS'(A)\ten_AA\<s_0\>\ten_A \ldots$ for $j>0$. Moreover, the $A$-bimodule  $A\<s_0\>$ admits a contracting cochain homotopy to $A$ (i.e. $h$ with $[\pd,h]=\id$ and $[\delta,h]=0$) because $\pd s^{2n+1}=s^{2n+2}$. 
 
 Thus $\Omega^j_{A\<s_{\ad}\>}\ten_{A\<s_{\ad}\>^e}A\<s_{\ad}\>$ admits a contracting cochain homotopy to  the cyclic tensor product $\ldots \ten_A \cS'(A)\ten_A\cS'(A)\ten_A \ldots$, and hence becomes quasi-isomorphic to it on applying $\Tot^{\Pi}$. Combining these inclusions for $j \ge p$ and taking $C_p$-coinvariants gives the quasi-isomorphism $F^p\sX(A)\into F^p\DR(A\<s_{\ad}\>)_{\cyc}$.  
 \end{proof}
Beware that the lemma does not give a subcomplex of $F^p\DR(A\<s_{\ad}\>)$ itself, because $A$ is a subcomplex of $A\<s_0\>$ but not of $A\<s_l\>$ and $A\<s_r\>$. In other words, on taking the cyclic quotient, the non-$\pd$-closed subalgebra of $\Omega^*_{A\<s_{\ad}\>}$ generated by $\Omega^*_A$ and $ds$ becomes a double subcomplex which is moreover $\Tot^{\Pi}$-quasi-isomorphic on $F^p$. Equivalently, $\sX^{\tot}(A)$ is the cyclic quotient of an analogue of $\DR(A)\<ds\>$ with curvature $[ds,-]$. 


 \medskip
 Extending \cite[Definition 2.12]{yeungPreCYModReps} or 
 \cite[Definition 2.1]{KontsevichTakedaVlassopoulosLegendre} to stacky algebras along the lines of Definition \ref{bisymplecticdefstacky}, a smooth (or left) $d$-Calabi--Yau (d-CY) structure on $A$ is an  element of $\z_d\Tot^{\Pi}\mathbf{HN}_{\bt}(A)$ (negative cyclic homology) whose image in Hochschild homology induces a quasi-isomorphism
 \[
  \hatHHom_{A^{\oL,e}}(A, (A^0)^{\oL,e})\to  \Tot^{\Pi} (A\ten_{A^{\oL,e}}(A^0)^{\oL,e})_{[d]}.
 \]
 As in earlier versions of \cite{yeungPreCYModReps}, we can then form a space of smooth $d$-CY structures as the union of the non-degenerate path components of $\tau_{\le 0}(\Tot^{\Pi}\mathbf{HN}_{\bt}(A)_{[d]})$  (similarly to Definition \ref{PreSpdef}). 

 For the prestack $[ \Spec^{nc} A/\bG_m] $ of Remark \ref{EHCrmk} given by the adjoint action of $\bG_m$, we now have:
\begin{proposition}\label{CYprop}
 For $A \in DG^+dg\Alg(R)$ cofibrant, the following are equivalent:
 \begin{enumerate}
  \item The space of smooth $d$-Calabi--Yau structures on $A$.
  \item The space $\BiSp([\Spec^{nc} A/\g_m],2-d)$ of $(2-d)$-shifted bisymplectic structures on the formal quotient of $\Spec^{nc} A$ by $\g_m$.
  \item The space $\BiSp(\oR[\Spec^{nc} A/\bG_m],2-d)$ of $(2-d)$-shifted bisymplectic structures on the derived quotient of $\Spec^{nc} A$ by $\bG_m$.
  \end{enumerate}
\end{proposition}
\begin{proof}
 Replacing a cone in the cochain direction with one in the chain direction does not affect $\Tot^{\Pi}$, so the proof of \cite[Theorem 4.11]{yeungExtnFeiginTsygan} gives a quasi-isomorphism $\Tot^{\Pi}\mathbf{HN}(A)_{[4]}\to F^2\sX^{\tot}(A)$, and hence from $d$-chains in negative cyclic homology to $(2-d)$-shifted bisymplectic structures. Since $\Omega^1_{A\<s_{\ad}\>}\ten_{A\<s_{\ad}\>^e}(A^0)^e \cong  \cS'(A)\ten_{A^e}A^0$, the non-degeneracy conditions also correspond, giving an equivalence between the first two spaces.
 
The second equivalence then follows because $ \BiSp(\oR[\Spec^{nc} A/\bG_m],2-d) \to \BiSp(\tilde{A}\<s_{\ad}\>,2-d)$ is a weak equivalence by Proposition \ref{integratecorBiSp} and \cite[Example \ref{NCstacks-DstarBGex}]{NCstacks}, for any cofibrant replacement  $\tilde{A}\<s_{\ad}\> $ of $A\<s_{\ad}\>$, but this in turn is equivalent to $\BiSp(A\<s_{\ad}\>,2-d)$ (cf. Remark \ref{weakerassumptionrmk}).
 \end{proof}

\begin{remark}[dg categories]\label{dgcatrmk}
 Although we have only considered the case where $A$ is a dg algebra, there are obvious generalisations of these calculations to dg categories $\cA$. These come from a natural prestack $[\Spec^{nc} A/\mathrm{Stab}_{\bG_m}X]$ when $\cA$ has a finite set $X$ of objects; formally adding a unit would give something closely related for dg categories with infinitely many objects (thus losing the implicit requirement $\sum e_x=1$ below). 
 
 We first form the DGAA $A:= \bigoplus_{x,y}\cA(x,y)$, then define $[\Spec^{nc} A/\mathrm{Stab}_{\bG_m}X](B) \subset [\Spec^{nc} A/\bG_{m}](B)$ to be the nerve of the groupoid  with the same objects $\Hom(A,B)$, but with a morphism $g \in (\z_0B)^{\by}$  from $\phi$ to $g\phi g^{-1}$ only if $g$ commutes with the orthogonal idempotents $e_x:= \phi(1_x)$. We can interpret $\phi$ as determining a   dg category $\cB$  with objects $X$ and morphisms $\{e_xB e_y\}_{x,y \in X}$, together with a dg functor $\cA \to \cB$, and then $g$ is a natural isomorphism between the respective dg functors.
 
The stacky DGAA $D^*O([\Spec^{nc} A/\mathrm{Stab}_{\bG_m}X])$ is $A\<s_{\ad}\>/([s,1_x])_{x \in X}$. 
Replacing $\ten_R$ with $\ten_{R^X}$ throughout the arguments above,
the space of smooth $d$-Calabi--Yau structures on $\cA$ is then equivalent to the spaces $\BiSp(A\<s_{\ad}\>/([s,1_x])_{x \in X} ,2-d)$ and $\BiSp(\oR[\Spec^{nc} A/\mathrm{Stab}_{\bG_m}X],2-d)$, with the same proof as in the one object case.
\end{remark}

 \begin{remark}
  Although the characterisation of CY structures as shifted bisymplectic structures on $[\Spec^{nc} A/\g_m]$ gives a fairly simple description, the characterisation as structures on  $\oR[\Spec^{nc} A/\bG_m]$ is much more powerful. In particular, combined with Proposition \ref{weilhgsprop} and Lemma \ref{commSplemma}, it immediately induces shifted symplectic structures on (commutative) derived moduli stacks of representations, recovering  \cite[Theorem 4.53]{yeungPreCYModReps}.
  
 \end{remark}

 \subsection{The \tps{$(2-d)$}{(2-d)}-shifted bisymplectic structures on \tps{$\Perf_{\cA}$}{Perf(A)} and \tps{$\Mor_{\cA}$}{Mor(A)}}

The reasoning of \cite[\S2]{dmsch} ensures that for any dg category $\cA$, the moduli functor sending $B$ to the nerve of the core of the dg category of $\cA\ten^{\oL}_RB$-modules is homogeneous, as is any open subfunctor. 
 
Combining Proposition \ref{weilhgsprop} with Theorem \ref{Perfthm1} gives a criterion for the derived NC prestack $\Perf_A \co B \mapsto \Perf(A\ten^{\oL}_RB)$ to carry a $(2-d)$-shifted bisymplectic structure, and Remark \ref{weilhgsprop2} could extend this to some more dg categories, but we now give general conditions on $\cA$ for $\Perf_{\cA}$ to carry a shifted bisymplectic structure, and then consider the analogous question for the derived NC prestack of Morita morphisms.
 
\subsubsection{Perfect complexes}
  
For an $R$-linear dg category, consider the derived NC prestack $\Perf_{\cA}\co   B \mapsto \Perf(\cA\ten^{\oL}_RB)$, the nerve of the $\infty$-category of perfect right $\cA\ten^{\oL}_RB$-modules in complexes --- see for instance \cite{kellerModelDGCat} for more detail on modules over dg categories.

As in \cite[Remark \ref{NCstacks-Perfnicermk}]{NCstacks}, the derived NC prestack $\Perf_{\cA}$ is homogeneous, with  a perfect cotangent complex whenever $\cA$ is a locally proper dg category over $R$.
As one would expect, existence of a $d$-Calabi--Yau structure on $\cA$ will suffice to put a $(2-d)$-shifted symplectic structure on $\Perf_{\cA}$; following \cite[\S 10.2]{KontsevichSoibelmanAinftyalgcats}, we can weaken this from the classical notion by taking a non-degenerate element in cyclic cohomology (i.e. cohomology of the $R$-linear dual of the complex computing cyclic homology of a dg category).

\begin{theorem}\label{Perfthm2}
If $\cA$ is a locally proper dg category over $R$, then there is a natural map from the cyclic cohomology group $\HC_R^{-d}(\cA)$ to equivalence classes $\pi_0\PreBiSp(\Perf_{\cA},2-d)$ of $(2-d)$-shifted pre-bisymplectic structures on the homogeneous  derived  NC prestack $\Perf_{\cA}$.

The shifted pre-bisymplectic structure coming from a class $\alpha \in \HC_R^{-d}(\cA)$ is bisymplectic if and only if its image $\bar{\alpha}$ under the natural map $\HC_R^{-d}(\cA) \to \HH_R^{-d}(\cA,\cA^*)=\Ext^{-d}_{\cA^{\oL,e}}(\cA(-,-),\cA^*(-,-))$ is non-degenerate, i.e.  a quasi-isomorphism
\[
\cA(-,-) \to \cA^*(-,-)[-d] 
\]
of $\cA^{\oL,e}$-modules, where $\cA^*(X,Y):=\oR\HHom_R(\cA(Y,X),R)$.
\end{theorem}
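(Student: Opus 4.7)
The plan is to mirror the construction and argument of Theorem \ref{Perfthm1}, with the Calabi--Yau class $\alpha \in \HC^{-d}_R(\cA)$ playing the role of the trace on the base ring. First I would extend the Goodwillie--Jones Chern character of Lemma \ref{goodwillielemma} to dg categories, giving for each $B \in dg_+\Alg(R)$ a natural map
\[
 \ch^-_{\cA} \co \Perf_{\cA}(B) = \Perf(\cA \ten^{\oL}_R B) \to \mathbf{HN}_{R,\bt}(\cA \ten^{\oL}_R B),
\]
then use $\alpha$, viewed as a morphism $\mathbf{HC}_{R,\bt}(\cA) \to R_{[-d]}$ in $\D(R)$, together with the Eilenberg--Zilber/shuffle map for mixed complexes, to construct a natural pairing
\[
 \nu_{\alpha} \co \mathbf{HN}_{R,\bt}(\cA \ten^{\oL}_R B) \to \mathbf{HN}_{R,\bt}(B)_{[-d]}
\]
that traces out the $\cA$-factor. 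Composing $\xi_2 \circ \nu_{\alpha} \circ \ch^-_{\cA}$, where $\xi_2$ is the projection of Lemma \ref{KaroubiChernLemma}, produces, functorially in $B$, a $(2-d)$-shifted pre-bisymplectic structure $\omega_{\alpha}$ on $\Perf_{\cA}$ in the sense of Definition \ref{PreBiSphgsdef}.

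For the non-degeneracy criterion I would adapt the Atiyah-class argument of Theorem \ref{Perfthm1}. Local properness of $\cA$ gives at a point $[E] \in \Perf_{\cA}(B)$ a tangent complex
\[
\oR\HHom_{B^{\oL,e}}(\bL_{\Perf_{\cA},[E]}, B^{\oL,e}) \simeq \oR\HHom_{\cA \ten B}(E, E \ten_R B)[1] \simeq  (E^* \ten^{\oL}_{\cA} E)[1],
\]
where $E^* := \oR\HHom_{\cA \ten B}(E, \cA \ten B)$, so $\bL_{\Perf_{\cA}, [E]}$ is the predual and the Atiyah class $\At_E$ factors as $u \circ i_E$ exactly as in the proof of Theorem \ref{Perfthm1}. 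Non-degeneracy of $\bar{\alpha}$ is equivalent to its providing a quasi-isomorphism $\sigma_{\alpha}\co\cA \xra{\sim} \cA^*[-d]$ of $\cA^{\oL,e}$-modules, which tensoring with $E^*$ and $E$ yields
\[
 \sigma_{\alpha}^{E} \co \oR\HHom_{B^{\oL,e}}(\bL_{\Perf_{\cA}, [E]}, B^{\oL,e}) \xra{\sim} \bL_{\Perf_{\cA}, [E]}[2-d],
\]
and I then need to show that $(\omega_{\alpha})_2^{\sharp}$ is homotopic to $\At_E^* \circ \sigma_{\alpha}^{E} \circ \At_E$.

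The main obstacle will be this last identification, which is a dg-categorical, $\cA$-relative version of the Bressler--Nest--Tsygan Lefschetz formula invoked in the proof of Theorem \ref{Perfthm1}. Concretely, I would show that the image of $\ch^-_{\cA}(E)$ in Hochschild homology $\mathbf{HH}_{R,\bt}(\cA \ten B)$ coincides with the Lefschetz class $\cL_{\cA \ten B/R}(\id_E)$; pairing with $\alpha$ under $\nu_{\alpha}$ then produces exactly the composite $i_E^* \circ \sigma_{\alpha} \circ i_E$ on tangent spaces, and the outer $u$, $u^*$ factors come from the Hochschild differential vanishing on closed cyclic classes, just as in the affine case. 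The Lefschetz identity itself can be established by reducing to representable modules $E = \cA(-,x) \ten B$, where the Chern character is explicit and both sides evaluate directly, and then extending by the exactness and Morita-invariance properties of $\ch^-_{\cA}$. Granted this identity, a diagram chase shows that $(\omega_{\alpha})_2^{\sharp}$ is a quasi-isomorphism precisely when $\sigma_{\alpha}$ is, i.e. when $\bar{\alpha}$ is non-degenerate; the converse implication follows by specialising to a set of compact generators of the perfect derived category of $\cA$, where the joint collection of Atiyah maps detects isomorphisms of $\cA^{\oL,e}$-modules.
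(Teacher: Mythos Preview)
Your proposal is correct and follows essentially the same route as the paper: construct the pre-bisymplectic structure by composing the dg-categorical Chern character $\Perf(\cA\ten^{\oL}_RB)\to \mathbf{HN}_{R,\bt}(\cA\ten^{\oL}_RB)$ with the pairing against $\alpha\in\HC_R^{-d}(\cA)$ to land in $\mathbf{HN}_{R,\bt}(B)_{[-d]}$, then apply $\xi_2$; for non-degeneracy, compute the (co)tangent complex at $[E]$ and identify $(\omega_\alpha)_2^\sharp$ with the map induced by $\bar\alpha\co\cA\to\cA^*[-d]$ via the Atiyah-class factorisation, exactly as in Theorem~\ref{Perfthm1}. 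The paper phrases the pairing as $\mathbf{HC}_R^{\bt}(\cA)\ten^{\oL}_R\mathbf{HN}_{R,\bt}(\cA\ten^{\oL}_RB)\to\mathbf{HN}_{R,\bt}(B)$ (using that $\mathbf{HC}_R^{\bt}(\cA)$ is the topological $R\llb u\rrb$-dual of $\mathbf{HN}_{R,\bt}(\cA)$) rather than via an Eilenberg--Zilber map, and it is terser in the non-degeneracy step, simply asserting that the argument of Theorem~\ref{Perfthm1} adapts; your more explicit plan to verify the relative Lefschetz identity and to handle the converse via compact generators fills in details the paper leaves to the reader.
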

\begin{proof}
By Proposition \ref{HNsympprop1}, for any $B \in dg_+\Alg(R)$, there is a natural 
map from the space of $d$-cycles in  $ \mathbf{HN}_{R,\bt}(B)$ to $\PreBiSp(B,2-d)$, the space of $(2-d)$-shifted pre-bisymplectic structures on $B$. 

Adapting Lemma \ref{goodwillielemma} to dg categories by using the Chern character of \cite{tabuadaUniversal}, we also have a Chern character
\[
\ch^-\co K(\cA\ten^{\oL}_RB)_{\Q} \to \mathbf{HN}_{R,\bt}(\cA\ten^{\oL}_RB)
\]
from $K$-theory to the complex computing negative cyclic homology of $\cA\ten^{\oL}_RB$.

Now, if $\mathbf{HC}_{R,\bt}(\cA)$ is the complex computing cyclic homology of $\cA$, and $\mathbf{HC}_{R}^{\bt}(\cA)$ its $R$-linear dual,
observe that there is a natural pairing of complexes
\[
 \mathbf{HC}_{R}^{\bt}(\cA)\ten^{\oL}_R \mathbf{HN}_{R,\bt}(\cA\ten^{\oL}_RB) \to \mathbf{HN}_{R,\bt}(B)
\]
--- this  follows because $\mathbf{HC}_{R}^{\bt}(\cA)$ is essentially the topological $R\llb u \rrb$-linear dual of $\mathbf{HN}_{R,\bt}(\cA)$, while $\mathbf{HN}_{R,\bt}(\cA\ten^{\oL}_RB)$ can be written as a topological $R\llb u \rrb$-linear tensor product of $\mathbf{HN}_{R,\bt}(\cA)$ and $\mathbf{HN}_{R,\bt}(B)$.

Combined with  the factor $\xi_2$ from Lemma \ref{KaroubiChernLemma}, this gives us a map 
\[
K(\cA\ten^{\oL}_RB)_{\Q}\ten_{\Q} \mathbf{HC}_{R}^{\bt}(\cA)\to F^2\DR_{\cyc}(B/R)[2],
\]
functorial in $B$. Combining this with the 
natural map from $\Perf$ to $K$
gives us our desired map from the space of $(-d)$-cocycles in $\mathbf{HC}_{R}^{\bt}(\cA)$ to  the space $\PreBiSp(\Perf_{\cA},2-d)$ from Definition \ref{PreBiSphgsdef}, and our first required statement follows by taking path components.

In order to establish non-degeneracy, we begin by describing the cotangent complex of  $\Perf_{\cA}$.
If we take  a point $[E] \in \Perf_{\cA}(B)$ corresponding to a perfect complex $E$ of right $\cA\ten^{\oL}_RB$-modules, then as in \cite[Remark \ref{NCstacks-Perfnicermk}]{NCstacks}, the  tangent complex $\oR\HHom_{B^{\oL,e}}(\bL_{\Perf_{\cA},[E]},B^{\oL,e})$ is given by 
\[
\oR\HHom_{\cA \ten_R^{\oL} B}(E, E\ten_R^{\oL}B)[1] \simeq E\ten_{\cA}^{\oL}E^*[1],
\]
where $B$ acts on both terms in the $\HHom$ on the right and $E^*:=\oR\HHom_{\cA \ten_R^{\oL}B}(E,\cA(-,-) \ten_R^{\oL} B)$. 

By writing the right-hand copy of $E$ as $E^{**}$, 
we can also rearrange this expression as
\begin{align*}
\oR\HHom_{\cA \ten_R^{\oL} B}(E, E\ten_R^{\oL}B)[1] &\simeq \oR\HHom_{\cA^{\oL,e} \ten_R^{\oL} B^{\oL,e}}(E\ten_R^{\oL}E^*, \cA(-,-) \ten_R^{\oL} B^{\oL,e})[1]\\
&\simeq \oR\HHom_{\cA^{\oL,e} \ten_R^{\oL} B^{\oL,e}}(E\ten_R^{\oL}\cA^*(-,-) \ten_R^{\oL}E^*,  B^{\oL,e})[1],\\
&\simeq  \oR\HHom_{B^{\oL,e}}(E\ten_{\cA}^{\oL}\cA^*(-,-) \ten_{\cA}^{\oL}E^*,B^{\oL,e})[1],
\end{align*}
(each $\cA(X,Y)$ being perfect over $R$), so
\[
 \bL_{\Perf_{\cA},[E]} \simeq E\ten_{\cA}^{\oL}\cA^*(-,-) \ten_{\cA}^{\oL}E^*[-1].
\]

The proof of Theorem \ref{Perfthm1} now adapts to show that the map
\[
 \omega_2^{\sharp}\co\oR\HHom_{B^{\oL,e}}(\bL_{\Perf_{\cA},[E]},B^{\oL,e}) \to \bL_{\Perf_{\cA},[E]}[2-d]
\]
induced by our pre-bisymplectic structure $\omega$ is just the map
\[
 E\ten_{\cA}^{\oL}E^*[1] \to E\ten_{\cA}^{\oL}\cA^*(-,-) \ten_{\cA}^{\oL}E^*[1-d]
\]
induced by the quasi-isomorphism
\[
\bar{\alpha} \co \cA(-,-) \to \cA^*(-,-)[-d] 
\]
of $\cA^{\oL,e}$-modules, so is a quasi-isomorphism.
\end{proof}

\begin{remark}
 To understand how the conditions of Theorem \ref{Perfthm2} can be satisfied, first observe that for a $d$-CY dg category $\cA$, the Serre functor gives us bifunctorial isomorphisms $\cA(X,Y) \to \cA(Y,X)^*[-d]$, and in particular $\id_X \in \cA(X,X)$ gives rise to an $R$-linear  trace $\cA(X,X) \to R[-d]$; bifunctoriality then ensures that this defines a $(-d)$-cocycle in cyclic cohomology (explicitly calculated using the quotient form of cyclic homology as in \cite[Lemma 9.6.10]{W}). Non-degeneracy follows from these maps being isomorphisms. The datum of a non-degenerate element of cyclic cohomology $\HC_R^{-d}(\cA)$ precisely corresponds to a homotopy class   of a $d$-dimensional right Calabi--Yau structures on $\cA$ in the terminology of \cite[Definition 3.2]{BravDyckerhoff}, giving many examples of derived NC moduli functors with shifted bisymplectic structures.
 
 
 In particular, the conditions are satisfied by cyclic $A_{\infty}$-algebras and categories, along the lines of \cite[\S 10.2 and in particular Theorem 10.2.2]{KontsevichSoibelmanAinftyalgcats}. Explicitly, we can apply the quotient form of cyclic homology to the $A_{\infty}$ Hochschild complex of \cite[Theorem 3.7]{GetzlerJonesAinftyCyclicBar}, and then obtain the data of our required trace by applying the  given non-degenerate pairing, with the data of a weakly unital system as one of the inputs. 
 \end{remark}

 \begin{remark}
Several precursors of Theorem \ref{Perfthm2} can be found, such as \cite{ChenEshmatovCYAlgShiftedNCSympStr}, which constructs a form of $(2-d)$-shifted bisymplectic structure on the Koszul dual of a Koszul  $d$-CY algebra concentrated in degree $0$. However, beware that their definitions for shifted bisymplectic and symplectic structures are  not homotopy-invariant, being  much more restrictive than ours or those of \cite{KhudaverdianVoronov,PTVV}.
  \end{remark}

  \begin{remark}
Via  \cite[Remark \ref{NCstacks-DstarPerf}]{NCstacks}, Theorem  \ref{Perfthm2} also generalises to problems such as moduli of projective modules with connections. Instead of a locally proper dg category, take a category $\cA$ enriched in double complexes (in the form of cochain chain complexes), with the chain complex $\bigoplus_i \cA(X,Y)^i$ perfect over $R$ for all objects $X,Y \in \cA$, and $\cA(X,Y)^i=0$ for $i<0$. Our associated derived NC prestack is $\Perf_{\cA}:=\ho\Lim_{i \in \Delta}\Perf_{D^i\cA}$ for the cosimplicial denormalisation functor $D$, and an element of $\Perf_{\cA}(B)$ is then an $\cA\ten^{\oL}_RB$-module $E$ in double complexes for which $E^0$ is perfect over $\cA^0\ten^{\oL}_RB$ and which is homotopy Cartesian in the sense that the natural maps $E^0\ten_{\cA^0}\cA^i(-,-)\to E^i$ are quasi-isomorphisms. 

We can then form a bigraded cyclic cohomology complex, and take the (direct sum) total complex to give a notion of cyclic cohomology of $\cA$, elements of which will give rise to shifted pre-bisymplectic structures on $\Perf_{\cA}$. The non-degeneracy condition giving rise to  shifted bisymplectic structures will  then be that the natural map
\[
\hatTot(\cA^0\ten^{\oL}_{\cA}\cA^0)\simeq  \hatTot  (\cA^0\ten^{\oL}_{\cA}\cA\ten^{\oL}_{\cA}\cA^0) \to \hatTot  (\cA^0\ten^{\oL}_{\cA}\cA^*\ten^{\oL}_{\cA}\cA^0)[-d]
\]
induced by the cyclic cohomology element is a quasi-isomorphism of $\cA^0$-bimodules.
 \end{remark}


We now turn our attention to bi-Lagrangian structures. Given a dg functor $\theta \co \cB \to  \cA$ between locally proper dg categories over $R$, we  consider the cone $\mathbf{HC}_{R,\bt}(\cB,\cA)$ of the natural map $\theta \co \mathbf{HC}_{R,\bt}(\cB) \to\mathbf{HC}_{R,\bt}(\cA) $ between the complexes calculating cyclic homology, and then look at the relative cyclic cohomology groups $\HC_R^i(\cB,\cA)$ defined as cohomology of the $R$-linear dual $\mathbf{HC}_{R}^{\bt}(\cB,\cA)$ of 
 $\mathbf{HC}_{R,\bt}(\cB,\cA)$.
 
Via the natural  morphism from Hochschild homology  to cyclic homology, there is a natural map
\[
  \mathbf{HC}_{R}^{\bt}(\cB,\cA) \to \cocone(\CC^{\bt}_R(\cA,\cA^*) \to \CC^{\bt}_R(\cB,\cB^*)).
\]
Rewriting the right-hand side as 
$\cocone(\oR\HHom_{\cA^{\oL,e}}(\cA,\cA^*)\to \oR\HHom_{\cB^{\oL,e}}(\cB,\cB^*))$,
we see that this in turn maps to
\begin{align*}
 &\cocone(\oR\HHom_{\cB^{\oL,e}}(\theta_*\cA,\cB^*)\to \oR\HHom_{\cB^{\oL,e}}(\cB,\cB^*))\\
 &\simeq \oR\HHom_{\cB^{\oL,e}}(\cone(\cB \to \theta_*\cA),\cB^*).
\end{align*}
 
The proof of Theorem \ref{Perfthm2} now readily adapts to give the following: 
\begin{theorem}\label{Perfthm2Lag}
If $\theta \co \cB \to  \cA$  is a dg functor between locally  proper dg categories over $R$, then there is a natural map from the cyclic cohomology group $\HC_R^{-d}(\cB,\cA)$ to equivalence classes $\pi_0\BiIso(\Perf_{\cA},\Perf_{\cB};2-d)$ of $(2-d)$-shifted bi-isotropic structures on the homogeneous  derived  NC prestack $\Perf_{\cB}$ over $\Perf_{\cA}$, lifting the $(2-d)$-shifted pre-bisymplectic structure on $\Perf_{\cA}$ given by applying Theorem \ref{Perfthm2} to the image of the natural map $ \HC_R^{-d}(\cB,\cA) \to\HC_R^{-d}(\cA)$.  

When the image of a class  $\alpha \in \HC_R^{-d}(\cB,\cA)$ in $\HH_R^{-d}(\cA,\cA^*)$ is non-degenerate (making $\Perf_{\cA}$ bisymplectic as in Theorem \ref{Perfthm2}), the resulting  shifted bi-isotropic structure on $\Perf_{\cB}$ over $\Perf_{\cA}$ is 
is bi-Lagrangian if and only if its image  under the natural map 
\[
 \HC_R^{-d}(\cB,\cA) \to \Ext^{-d}_{\cB^{\oL,e}}(\cone(\cB \to \theta_*\cA),\cB^*)
\]
is non-degenerate, i.e. a quasi-isomorphism
\[
\cone(\cB \to \theta_*\cA) \to \cB^*[-d] 
\]
of $\cB^{\oL,e}$-modules.
\end{theorem}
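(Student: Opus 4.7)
The strategy is to relativize the construction of Theorem \ref{Perfthm2} along the dg functor $\theta$, replacing the absolute Chern character and cyclic cohomology pairing with their relative analogues, and then translate the non-degeneracy criterion through the explicit cotangent complex descriptions obtained in the proof of Theorem \ref{Perfthm2}.

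For the first half of the statement, I would start by observing that for any $B \in dg_+\Alg(R)$, the pair $(\theta_*, \id)$ gives a natural commutative square of Chern characters
\[
\xymatrix{
\Perf_{\cB}(B) \ar[r]^-{\theta_*} \ar[d]_{\ch^-} & \Perf_{\cA}(B) \ar[d]^{\ch^-} \\
\mathbf{HN}_{R,\bt}(\cB\ten^{\oL}_R B) \ar[r] & \mathbf{HN}_{R,\bt}(\cA\ten^{\oL}_R B),
}
\]
so that a homotopy commutative square $(E', E, \phi \co \theta_* E' \simeq E)$ produces a cocycle in the cocone of the lower horizontal map. The pairing used in Theorem \ref{Perfthm2} extends to a relative pairing
\[
\mathbf{HC}_{R}^{\bt}(\cB,\cA)\ten^{\oL}_R \cocone(\mathbf{HN}_{R,\bt}(\cB\ten^{\oL}_R B) \to \mathbf{HN}_{R,\bt}(\cA\ten^{\oL}_R B)) \to \mathbf{HN}_{R,\bt}(B),
\]
because $\mathbf{HC}_{R}^{\bt}(\cB,\cA)$ is the $R\llb u\rrb$-linear dual of $\cone(\mathbf{HN}_{R,\bt}(\cB) \to \mathbf{HN}_{R,\bt}(\cA))$, which pairs with the cocone of the $B$-twisted maps landing in $\mathbf{HN}_{R,\bt}(B)$. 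Combining this pairing with $\xi_2$ from Lemma \ref{KaroubiChernLemma} and applying it functorially to rigid stacky points gives a natural map from $\HC_R^{-d}(\cB,\cA)$ to $\pi_0\BiIso(\Perf_{\cA},\Perf_{\cB};2-d)$. By construction, the image in $\pi_0\PreBiSp(\Perf_{\cA},2-d)$ of a class $\alpha$ agrees with the pre-bisymplectic structure produced by Theorem \ref{Perfthm2} from $\theta_*(\alpha) \in \HC_R^{-d}(\cA)$.

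For the bi-Lagrangian criterion, I would combine the cotangent complex formula $\bL_{\Perf_{\cA},[E]}\simeq E\ten^{\oL}_{\cA}\cA^*(-,-)\ten^{\oL}_{\cA}E^*[-1]$ established in Theorem \ref{Perfthm2} with its analogue for $\Perf_{\cB}$. At a point $(E',E,\phi)$, identifying $E$ with $\theta_*E'$, the cone appearing in Definition \ref{Lagdef2} becomes quasi-isomorphic to
\[
E'\ten^{\oL}_{\cB} \cone\bigl(\cB \to \theta_*\cA\bigr)\ten^{\oL}_{\cB} (E')^*[-1],
\]
and the same analysis as in Theorem \ref{Perfthm1} identifies the contraction map $(\eta\circ \omega_2^{\sharp},\lambda_2^{\sharp})$ with the morphism induced by the image of $\alpha$ under
\[
\HC_R^{-d}(\cB,\cA) \to \Ext^{-d}_{\cB^{\oL,e}}\bigl(\cone(\cB\to \theta_*\cA),\cB^*\bigr).
\]
Non-degeneracy of this contraction is therefore equivalent to this image being a quasi-isomorphism $\cone(\cB\to \theta_*\cA) \to \cB^*[-d]$, giving the stated criterion.

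The main obstacle is the second step: tracking the Lefschetz-type identification from the end of the proof of Theorem \ref{Perfthm1} through the relative setting to confirm that the pairing induced by $\alpha$ on cotangent complexes really is the one obtained by contraction with $\lambda_2$, rather than some twist of it. This amounts to verifying that the square
\[
\xymatrix@R=12pt{
\oR\HHom_{B^{\oL,e}}(\bL_{\Perf_{\cB},[E']},B^{\oL,e}) \ar[r]\ar[d] & E'\ten^{\oL}_{\cB} (E')^*[1] \ar[d]\\
\oR\HHom_{B^{\oL,e}}(\bL_{\Perf_{\cA},\theta_*[E']},B^{\oL,e}) \ar[r] & \theta_*E'\ten^{\oL}_{\cA}(\theta_*E')^*[1]
}
\]
of non-commutative Atiyah classes commutes coherently, allowing the Lefschetz argument of Theorem \ref{Perfthm1} to be run inside the cone.
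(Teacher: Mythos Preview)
Your approach is essentially the paper's own: relativise the construction of Theorem~\ref{Perfthm2} along $\theta$ and then verify non-degeneracy via the explicit cotangent descriptions, with the Lefschetz-type identification of Theorem~\ref{Perfthm1} carried along. The paper states no more than ``the proof of Theorem~\ref{Perfthm2} now readily adapts'', so your level of detail is already greater than what is given there.

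One point of framing is slightly off. You do not need to pass through ``a cocycle in the cocone of the lower horizontal map'': an element $(E',E,\phi)$ with $\theta_! E'\simeq E$ does \emph{not} land in $\cocone(\mathbf{HN}(\cB\ten^{\oL}_RB)\to \mathbf{HN}(\cA\ten^{\oL}_RB))$, since $\ch^-(E')$ has no nullhomotopy in $\mathbf{HN}(\cA\ten^{\oL}_RB)$. The cleaner statement is that the map of Theorem~\ref{Perfthm2} is contravariantly functorial in the dg category, giving a commutative square
\[
\xymatrix{
\mathbf{HC}_R^{\bt}(\cA) \ar[r]\ar[d]_{\theta^*} & \PreBiSp(\Perf_{\cA},2-d)\ar[d]^{(\theta_!)^*}\\
\mathbf{HC}_R^{\bt}(\cB) \ar[r] & \PreBiSp(\Perf_{\cB},2-d),
}
\]
and taking cocones of the vertical arrows gives the required map $\HC_R^{-d}(\cB,\cA)\to \pi_0\BiIso(\Perf_{\cA},\Perf_{\cB};2-d)$ directly. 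Concretely, a class $\alpha=(\alpha_{\cA},\beta)$ with $d\beta=\theta^*\alpha_{\cA}$ gives $\omega_E=\xi_2\langle\alpha_{\cA},\ch^-(E)\rangle$ for $E\in\Perf_{\cA}(B)$, while $\lambda_{E'}=\xi_2\langle\beta,\ch^-(E')\rangle$ for $E'\in\Perf_{\cB}(B)$ is the nullhomotopy of $\omega_{\theta_!E'}=\xi_2\langle\theta^*\alpha_{\cA},\ch^-(E')\rangle$. No relative Chern character or cocone pairing is needed.

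Your second half is correct: the identification of the cone of tangent complexes with $E'\ten^{\oL}_{\cB}\cone(\cB\to\theta_*\cA)\ten^{\oL}_{\cB}(E')^*[1]$ follows exactly as you say, and the coherence of the Atiyah-class square you flag at the end is precisely the compatibility used implicitly when the paper says ``readily adapts''.
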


\begin{remark}
 Note that a class in $\HC_R^{-d}(\cB,\cA)$ with the non-degeneracy properties above is precisely the datum of
 a homotopy class of $(d+1)$-dimensional right Calabi-Yau structures on the morphism $\theta \co \cB \to  \cA$  in the terminology of \cite[Definition 4.7]{BravDyckerhoff}, giving many examples of derived NC moduli functors with shifted bisymplectic structures.
 
 Using the data of a weak unit applied to a cyclic pairing, such classes arise from the pre-Calabi--Yau algebras $B$ of  \cite{IyuduKontsevichPreCYNCPoisson}, taking $\theta$ to be the inclusion $A_{\infty}$-morphism $B \to B\oplus B^*[-d]$  associated to such structures. 
\end{remark}

\subsubsection{Derived Morita morphisms}

We now consider the derived NC prestack $\Mor_{\cA}$, which sends $B \in dg_+\Alg(R)$ to 
 the space of derived Morita morphisms from $\cA$ to $B$, i.e. the space of dg functors from $\cA$ to $\per_{dg}(B)$; by \cite[Theorem 1.1]{toenMorita}, 
  this is equivalent to the nerve $\Mor(\cA,B)$ of the core of the simplicial category  associated to the dg category $\mathrm{mor}_{dg}(\cA,B)$ of those $\cA-B$-bimodules which are perfect over $B$. 
  When $\cA$ is a smooth and proper dg category over $R$, observe that we have a natural equivalence $\Mor_{\cA} \simeq \Perf_{\cA^{\op}}$. 

We have already seen part of this prestack: when $A \in dg_+\Alg(R)$, the derived NC prestack $\Mor_A$ contains an open (i.e. homotopy \'etale monomorphic) sub-prestack $[\oR\Spec^{nc}A/\bG_m]$ as in Remark \ref{EHCrmk}, which parametrises only those $\cA-B$-bimodules which are $B$-linearly isomorphic to $B$.
  
The reasoning of \cite[\S2]{dmsch} shows that  the derived NC prestack $\Mor_{\cA}$ is homogeneous, with  a perfect cotangent complex whenever $\cA$ is a smooth dg category over $R$.

Explicitly, if we take  a point $[E] \in \Mor_{\cA}(B)$ corresponding to an $\cA-B$-bimodule  $E$ which is perfect over $B$, then  the  tangent complex $\oR\HHom_{B^{\oL,e}}(\bL_{\Mor_{\cA},[E]},B^{\oL,e})$ is given by
 \begin{align*}
  \oR\HHom_{\cA^{\op} \ten_R^{\oL} B}(E, E\ten_R^{\oL}B)[1] &\simeq \oR\HHom_{\cA^{\oL,e}}(\cA(-,-), \oR\HHom_{B}(E, E \ten_R^{\oL}B))[1]\\
  &\simeq \oR\HHom_{\cA^{\oL,e}}(\cA(-,-), E'\ten_R^{\oL}E)[1]\\
   &\simeq\oR\HHom_{\cA^{\oL,e}\ten_R^{\oL} B^{\oL,e}}(\cA(-,-)\ten_R^{\oL}(E'\ten_R^{\oL}E),B^{\oL,e} )[1]\\
   &\simeq\oR\HHom_{B^{\oL,e}}(E'\ten_{\cA}^{\oL} \cA(-,-) \ten_{\cA}^{\oL}E,B^{\oL,e} )[1]\\
&\simeq\oR\HHom_{B^{\oL,e}}(E'\ten_{\cA}^{\oL} E,B^{\oL,e} )[1]
 \end{align*}
where $B$ acts on both terms in the $\HHom$ on the right and we set $E':= \oR\HHom_{B}(E,  B)$. 
  
Thus $\bL_{\Mor_{\cA},[E]}$ is quasi-isomorphic to the $B^{\oL,e}$-linear predual $E'\ten_{\cA}^{\oL} E[-1]$. Writing this as $ \cA(-,-)\ten_{\cA^{\oL,e}}^{\oL}(E'\ten_R^{\oL}E)[-1]$ shows that the $B^{\oL,e}$-module $ \bL_{\Mor_{\cA},[E]}$ is perfect whenever $\cA$ is smooth.     

Meanwhile, since $\cA(-,-)$ is assumed to be a perfect $\cA^{\oL,e}$-module, we can write $\cA(-,-)^! := \oR\HHom_{\cA^{\oL,e}}(\cA(-,-),\cA(-,-)\ten^{\oL}_R\cA(-,-))$ and then
\begin{align*}
 \oR\HHom_{\cA^{\op} \ten_R^{\oL} B}(E, E\ten_R^{\oL}B)[1] &\simeq\oR\HHom_{\cA^{\oL,e}}(\cA(-,-), E'\ten_R^{\oL}E)[1]\\
 &\simeq E'\ten_{\cA}^{\oL} \cA(-,-)^! \ten_{\cA}^{\oL}E[1].
\end{align*}

\begin{theorem}\label{Morthm}
If $\cA$ is a smooth dg category over $R$, then there is a natural map from the negative cyclic homology group $\HN_{R,d}(\cA)$ to equivalence classes $\pi_0\PreBiSp(\Mor_{\cA},2-d)$ of $(2-d)$-shifted pre-bisymplectic structures on the homogeneous  derived  NC prestack $\Mor_{\cA}$.

The shifted pre-bisymplectic structure coming from a class $\alpha \in \HN_{R,d}(\cA)$ is bisymplectic if and only if its image $\bar{\alpha}$ under the natural map $\HN_{R,d}(\cA) \to \HH_{R,d}(\cA)= \Tor_{d}^{\cA^{\oL,e}}(\cA(-,-),\cA(-,-))$ is non-degenerate, in the sense that it induces a  quasi-isomorphism
\[
\ev_{\bar{\alpha}}\co \cA(-,-)^!\to \cA(-,-)[-d] 
\]
given by allowing $ \cA^!= \oR\HHom_{cA^{\oL,e}}(\cA,\cA\ten^{\oL}_R\cA)$ to act on the first factor of  $\bar{\alpha} \in \H_d(\cA\ten^{\oL}_{\cA^{\oL,e}}\cA)$.
\end{theorem}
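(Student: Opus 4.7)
The plan is to mirror the proof of Theorem \ref{Perfthm2}, with two duality swaps forced by the fact that $\cA$ is now smooth rather than proper: we use negative cyclic homology $\mathbf{HN}_{R,\bt}(\cA)$ itself instead of cyclic cohomology, and functoriality of $\mathbf{HN}$ under dg functors in place of the trace pairing. A point $[E] \in \Mor_{\cA}(B)$ is by definition a dg functor $\theta \co \cA \to \per_{dg}(B)$, and functoriality of negative cyclic homology under dg functors combined with Morita invariance $\mathbf{HN}_{R,\bt}(\per_{dg}(B)) \simeq \mathbf{HN}_{R,\bt}(B)$ gives a natural chain map
\[
\theta_* \co \mathbf{HN}_{R,\bt}(\cA) \to \mathbf{HN}_{R,\bt}(B),
\]
depending functorially on $B$ and $[E]$. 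Composing with $\xi_2$ from Lemma \ref{KaroubiChernLemma} and then applying Proposition \ref{HNsympprop1} sends $d$-cycles in $\mathbf{HN}_{R,\bt}(\cA)$ to elements of $\PreBiSp(B,2-d)$ functorially in $[E]$, so by Definition \ref{PreBiSphgsdef} this assembles into the required map $\HN_{R,d}(\cA) \to \pi_0\PreBiSp(\Mor_{\cA},2-d)$.

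For the non-degeneracy criterion, I would combine the formulas
\[
\bL_{\Mor_{\cA},[E]} \simeq E'\ten^{\oL}_{\cA} E[-1], \quad \oR\HHom_{B^{\oL,e}}(\bL_{\Mor_{\cA},[E]},B^{\oL,e}) \simeq E'\ten^{\oL}_{\cA}\cA^!\ten^{\oL}_{\cA} E[1]
\]
derived just before the theorem with an Atiyah-class calculation analogous to the one in the proof of Theorem \ref{Perfthm1}. Specifically, I would factor the non-commutative Atiyah class $\At_E$ induced by $[E]\co \oR\Spec^{nc} B \to \Mor_{\cA}$ through the map $u$ of Proposition \ref{HNsympprop1} as $\At_E = i_E \circ u$, and dually $\At_E^* = u^* \circ i_E^*$, where $i_E$ and $i_E^*$ encode the action of $E$ and its $B$-linear dual $E'$. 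Since the Hochschild image of $\theta_*\alpha$ is $\theta_*\bar\alpha$, and Morita invariance of $\HH$ identifies $\theta_*$ with $E'\ten_{\cA}^{\oL}(-)\ten_{\cA}^{\oL}E$ on $\HH(\cA) \to \HH(B)$, one deduces that $\omega_2^{\sharp}$ is naturally equivalent to $E'\ten^{\oL}_{\cA}\ev_{\bar\alpha}\ten^{\oL}_{\cA} E$.

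The theorem then follows: if $\ev_{\bar\alpha}\co \cA^! \to \cA[-d]$ is a quasi-isomorphism of $\cA^{\oL,e}$-modules then $E'\ten^{\oL}_{\cA}\ev_{\bar\alpha}\ten^{\oL}_{\cA} E$ is automatically a quasi-isomorphism at every $[E]$; conversely, the $\cA^{\oL,e}$-module map $\ev_{\bar\alpha}$ can be recovered from the tangent maps of $\Mor_{\cA}$ by testing on representable bimodules over a suitable base change, so non-degeneracy at every $[E]$ forces $\ev_{\bar\alpha}$ itself to be a quasi-isomorphism. I expect the main obstacle to be the chain-level identification of $\omega_2^{\sharp}$ with $E'\ten^{\oL}_{\cA}\ev_{\bar\alpha}\ten^{\oL}_{\cA} E$, as this is the dual of the Lefschetz computation in Theorem \ref{Perfthm1} and requires careful tracking of the compatibility of Morita invariance of $\mathbf{HN}$ with the Atiyah factorization in the smooth setting.
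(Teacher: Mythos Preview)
Your proposal is correct and follows essentially the same route as the paper's proof: Morita functoriality of $\mathbf{HN}$ to produce the map, then the Atiyah-class factorisation $\At_E = i_E\circ u$, $\At_E^* = u^*\circ i_E^*$ to identify the induced $\omega_2^{\sharp}$ on $\bL_{\Mor_{\cA},[E]}$ with $E'\ten^{\oL}_{\cA}\ev_{\bar\alpha}\ten^{\oL}_{\cA} E$ (what the paper calls $\sigma_{\bar\alpha}$). The paper makes the chain-level identification you flag as the main obstacle by writing the Morita transfer $\HH_{R,d}(\cA)\to\HH_{R,d}(B)$ explicitly as the composite through $(E'\ten^{\oL}_{\cA}E)\ten^{\oL}_{B^{\oL,e}}B$ and reading off $\ev_{\bar\omega_2}=i_E^*\circ\sigma_{\bar\alpha}\circ i_E$; your sketch via Morita invariance of $\HH$ is the same computation. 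One minor point: the paper only spells out the ``if'' direction of the non-degeneracy criterion, whereas you add the converse via testing on representable bimodules --- that extra step is needed for the ``only if'' as stated, and your argument for it is the natural one.
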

\begin{proof}
Since negative cyclic homology is functorial with respect to derived Morita morphisms, an element $[E] \in \Mor_{\cA}(B)$ defines a map $\mathbf{HN}_{R,\bt}(\cA)\to \mathbf{HN}_{R,\bt}(B)$. Combined with the map of Proposition \ref{HNsympprop1}, this gives us a map from $d$-cycles in $\mathbf{HN}_{R,\bt}(\cA)$ to $\PreBiSp(B,2-d)$, functorially in $(B,E)$, and hence a map to the space $\PreBiSp(\cM_{\cA},2-d)$ from Definition \ref{PreBiSphgsdef}, and our first required statement follows by taking path components.

It remains to show that whenever $\alpha \in \HN_{R,d}(\cA)$ is non-degenerate, the resulting pre-bisymplectic structure $\omega \in\PreBiSp(\cM_{\cA},2-d)$ is bisymplectic.

Given the quasi-isomorphisms $\oR\HHom_{B^{\oL,e}}(\bL_{\Mor_{\cA},[E]},B^{\oL,e})  \simeq E'\ten_{\cA}^{\oL} \cA(-,-)^! \ten_{\cA}^{\oL}E[1] $ and $\bL_{\Mor_{\cA},[E]}  \simeq E'\ten_{\cA}^{\oL} \cA(-,-) \ten_{\cA}^{\oL}E[-1] $ above, the map $ \ev_{\bar{\alpha}}$ induces a natural quasi-isomorphism
$\sigma_{\bar{\alpha}} \co \oR\HHom_{B^{\oL,e}}(\bL_{\Mor_{\cA},[E]},B^{\oL,e})\to \bL_{\Mor_{\cA},[E]}[2-d]$ by tensoring with $E'$ and $E$.

The argument now proceeds exactly as in the proof of Theorem \ref{Perfthm1}; it suffices to show that 
\[
 \omega_2^{\sharp}\co  \oR\HHom_{B^{\oL,e}}(\oL\Omega^1_B,B^{\oL,e}) \to \oL\Omega^1_B[2-d]
\]
is naturally homotopic to the composition $\At^*_E \circ \sigma_{\alpha} \circ \At_E$, for the non-commutative Atiyah class $\At_E \co \oR\HHom_{B^{\oL,e}}(\oL\Omega^1_B,B^{\oL,e}) \to \oR\HHom_{B^{\oL,e}}(\bL_{\Mor_{\cA},[E]},B^{\oL,e})$, and its adjoint  $\At^*_E \co\bL_{\Mor_{\cA},[E]} \to  \oL\Omega^1_B$. Again, $\At_E$ factorises as the composite
\[
 \oR\HHom_{B^{\oL,e}}(\oL\Omega^1_B,B^{\oL,e}) \xra{u} \oR\HHom_{B^{\oL,e}}(B,B^{\oL,e})[1] \xra{i_E} \oR\HHom_{\cA^{\op} \ten_R^{\oL}B}(E, E\ten_R^{\oL}B)[1] 
\]
for $u$ as in Proposition \ref{HNsympprop1} and $i_E$ given by $E\ten^{\oL}_B-$. Similarly, we have $\At_E^* \simeq u^*\circ i_E^*$, where $i_E^* \co E'\ten_{\cA}^{\oL}E[-1] \to B[-1]$ is given by evaluating $E'$ on $E$. Thus
\[
 \At^*_E \circ \sigma_{\alpha} \circ \At_E \simeq u^*\circ (i_E^* \circ \sigma_{\alpha} \circ i_E) \circ u.
\]

Now, the image  $\bar{\omega}_2$ of $\omega_2$ in $\HH_{R,d}B$ is comes from the element $\bar{\alpha} \in \HH_{R,d}(\cA)$
via the transformation $   \HH_{R,d}(\cA) \to \HH_{R,d}B $ associated to the Morita morphism $E$. 
This transformation is given by the composite
\begin{align*}
 \cA(-,-)\ten^{\oL}_{\cA^{\oL,e}}\cA(-,-) &\to \cA(-,-)\ten^{\oL}_{\cA^{\oL,e}}\oR\End_B(E(-),E(-))\\
 &\simeq   \cA(-,-)\ten^{\oL}_{\cA^{\oL,e}}(B\ten^{\oL}_{B^{\oL,e}}(E\ten^{\oL}_RE'))\\
&\simeq (E'\ten^{\oL}_{\cA}E) \ten^{\oL}_{B^{\oL,e}}B\\
&\xra{i_E^* \ten \id} B\ten^{\oL}_{B^{\oL,e}}B.
\end{align*}

Thus the evaluation map $\ev_{\bar{\omega}_2} \co \oR\HHom_{B^{\oL,e}}(B,B^{\oL,e}) \to B$ 
 is given by $i_E^* \circ \sigma_{\alpha}\circ i_E$, so 
$ 
\omega_2^{\sharp} \simeq \At^*_E \co \sigma_{\alpha} \circ \At_E
$, 
as required.
\end{proof}

\begin{remark}\label{locsysrmk}
 Note that the datum of a non-degenerate element of negative cyclic homology $\HN_{R,d}(\cA)$ precisely corresponds to   a 
 a homotopy class of $(d+1)$-dimensional left Calabi--Yau structures in the terminology of \cite[Definition 3.5]{BravDyckerhoff}. Examples for $\Mor_{\cA}$ satisfying the conditions thus include  dg categories of $\infty$-local systems in perfect complexes on a $d$-dimensional oriented manifold.
 
 On restricting to commutative input, Theorem \ref{Morthm} recovers the $(2-d)$-shifted symplectic structure on $\Mor_{\cA}^{\comm}$ constructed in \cite[Theorem 5.5(1)]{BravDyckerhoffII}, via the map $\BiSp(\Mor_{\cA},2-d) \to \Sp(\Mor_{\cA}^{\comm},2-d)$ coming from Lemma \ref{commSplemma}.
%
\end{remark}

\begin{remark}
Where Theorem \ref{Perfthm2} obtained shifted bisymplectic structures from classes in cyclic cohomology, Theorem \ref{Morthm} produces them from classes in \emph{negative} cyclic homology. One interpretation for this is that  cyclic cohomology of $\cA$ parametrises $S^1$-equivariant maps from Hochschild homology of $\cA$ to that of our base ring $R$, while negative cyclic homology parametrises the $S^1$-equivariant maps going the other way.

These are the two extremes of the bifunctorial theory \cite{quillenBivariantHC} which associates to a pair $\cA,\cC$ of dg categories over $R$ the complex $(\HHom_R(\CC(\cA), \CC(\cC))\llb u \rrb, b+ud)$ of $S^1$-equivariant maps from Hochschild homology of $\cA$ to Hochschild homology of $\cC$. 
For $\cA$ locally proper and $\C$ smooth, we can regard a cohomology class in this complex as non-degenerate if it induces a quasi-isomorphism $\C(-,-)^!\ten^{\oL}_R\cA(-,-) \to \C(-,-)\ten^{\oL}_R\cA(-,-)^*[-d]$, and then there is a common generalisation of Theorem \ref{Perfthm2} and  Theorem \ref{Morthm}, with essentially the same proof, putting a $(2-d)$-shifted bisymplectic structure on the derived NC prestack which sends a DGAA $B$ to the space of derived Morita morphisms from $\C$ to $\cA\ten^{\oL}_RB$, or equivalently the space of $\C^{\op}\ten^{\oL}_R\cA\ten^{\oL}_RB$-modules which are perfect over $\cA\ten^{\oL}_RB $.
\end{remark}

Given a dg functor $\phi \co \cA \to  \cB$ between smooth dg categories over $R$, we now consider the cocone $\mathbf{HN}_{R,\bt}(\cA,\cB)$ of the natural map $\phi \co \mathbf{HN}_{R,\bt}(\cA) \to\mathbf{HN}_{R,\bt}(\cB) $ between the complexes calculating negative cyclic homology, and then look at the relative negative cyclic homology groups $\HN_{R,i}(\cA,\cB):=\H_i\mathbf{HN}_{R,\bt}(\cA,\cB)$.
 
Via the natural  morphism from negative cyclic homology to  Hochschild homology, there is a natural map
\[
\mathbf{HN}_{R,\bt}(\cA,\cB) \to \cocone(\CC_{R,\bt}(\cA) \to \CC_{R,\bt}(\cB)).
\]
Rewriting the right-hand side as $\cocone(\cA\ten^{\oL}_{\cA^{\oL,e}}\cA\to \cB\ten^{\oL}_{\cB^{\oL,e}}\cB)$, this in turn maps to $ \cocone(\cA\ten^{\oL}_{\cA^{\oL,e},\phi}\cB\to \cB\ten^{\oL}_{\cB^{\oL,e}}\cB)$, so 
an element of this complex defines a map from $\oR\HHom_{\cB^{\oL,e}}(\cB, \cB^{\oL,e})$ to $\cocone( \cA\ten_{\cA^{\oL,e},\phi}\cB^{\oL,e} \to \cB)$ by evaluation on the second factor.

The proof of Theorem \ref{Morthm} now readily adapts to give the following:
\begin{theorem}\label{MorthmLag}
 If $\phi \co \cA \to  \cB$ is a dg functor between smooth dg categories over $R$, then there is a natural map from the negative cyclic homology group $\HN_{R,d}(\cA,\cB)$ to equivalence classes $\pi_0\BiIso(\Mor_{\cA},\Mor_{\cB};2-d)$ of $(2-d)$-shifted bi-isotropic structure on the homogeneous  derived  NC prestack $\Mor_{\cB}$ over $\Mor_{\cA}$, lifting the $(2-d)$-shifted pre-bisymplectic structure on $\Mor_{\cA}$ given by applying Theorem \ref{Perfthm2} to the image of the natural map $ \HN_{R,d}(\cA,\cB) \to\HN_{R,d}(\cA)$.  

When the image of a class  $\alpha \in \HN_{R,d}(\cA,\cB)$ in $\HH_{R,d}(\cA,\cA^*)$ is non-degenerate (making $\Mor_{\cA}$ bisymplectic as in Theorem \ref{Morthm}), the resulting  shifted bi-isotropic structure on $\Mor_{\cB}$ over $\Mor_{\cA}$ is 
is bi-Lagrangian if and only if its image  under the natural map 
\[
 \HN_{R,d}(\cA,\cB) \to\Ext^{-d}_{\cB^{\oL,e}}(\oR\HHom_{\cB^{\oL,e}}(\cB, \cB^{\oL,e}), \cocone( \cA\ten_{\cA^{\oL,e},\phi}\cB^{\oL,e} \to \cB))
 \]
is non-degenerate, i.e. a quasi-isomorphism
\[
\cB^! \to  \cocone( \cA\ten_{\cA^{\oL,e},\phi}\cB^{\oL,e} \to \cB)[-d] 
\]
of $\cB^{\oL,e}$-modules.
\end{theorem}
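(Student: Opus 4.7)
The plan is to mimic the proof of Theorem \ref{Morthm} in the relative setting, exploiting the functoriality of negative cyclic homology under derived Morita morphisms. Given $[E] \in \Mor_{\cB}(B)$ and its restriction $\phi^*[E] \in \Mor_{\cA}(B)$, the factorisation $\cA \xra{\phi} \cB \xra{E} \per_{dg}(B)$ yields a commutative triangle of transfer maps
\[
\mathbf{HN}_{R,\bt}(\cA) \to \mathbf{HN}_{R,\bt}(\cB) \to \mathbf{HN}_{R,\bt}(B),
\]
so a $d$-cycle in $\mathbf{HN}_{R,\bt}(\cA,\cB) = \cocone(\mathbf{HN}_{R,\bt}(\cA) \to \mathbf{HN}_{R,\bt}(\cB))$ is sent to a $d$-cycle in $\mathbf{HN}_{R,\bt}(B)$ equipped with a canonical nullhomotopy coming from this triangle. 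Composing fibrewise with the Karoubi--Chern map $\xi_2 \circ \ch^-$ of Lemma \ref{KaroubiChernLemma} and Proposition \ref{HNsympprop1}, and using functoriality in $(B,E)$, one obtains the required element of $\pi_0 \BiIso(\Mor_{\cA}, \Mor_{\cB}; 2-d)$ lifting the pre-bisymplectic structure of Theorem \ref{Morthm} on $\Mor_{\cA}$.

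To establish the non-degeneracy criterion I would first reuse the cotangent calculations from Theorem \ref{Morthm}: at $[E]$ and $\phi^*[E]$ one has $\oR\HHom_{B^{\oL,e}}(\bL_{\Mor_{\cB},[E]}, B^{\oL,e}) \simeq \oR\HHom_{\cB^{\oL,e}}(\cB, E'\ten_R^{\oL} E)[1]$ and similarly for $\cA$, where $E' = \oR\HHom_B(E,B)$. Using the adjunction $\oR\HHom_{\cA^{\oL,e}}(\cA, M) \simeq \oR\HHom_{\cB^{\oL,e}}(\cA \ten^{\oL}_{\cA^{\oL,e},\phi} \cB^{\oL,e}, M)$ for $\cB$-bimodules $M$ regarded as $\cA$-bimodules through $\phi$, the cone appearing in Definition \ref{bisymplecticdef2} identifies with
\[
\oR\HHom_{\cB^{\oL,e}}\!\bigl(\cocone(\cA \ten^{\oL}_{\cA^{\oL,e},\phi} \cB^{\oL,e} \to \cB),\ E'\ten_R^{\oL} E\bigr)[1].
\]

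The pairing $(\phi^* \omega_2^{\sharp}, \lambda_2^{\sharp})$ will then factor through a relative version of the Atiyah-class composition of Theorem \ref{Morthm}: the identification $\At_E = i_E \circ u$ has a compatible counterpart at $\phi^*[E]$, so contraction with the bi-isotropic pair $(\omega, \lambda)$ is computed by evaluation on the image of $\bar{\alpha}$ under the natural map
\[
\HN_{R,d}(\cA,\cB) \to \Ext^{-d}_{\cB^{\oL,e}}\bigl(\cB^!,\ \cocone(\cA \ten^{\oL}_{\cA^{\oL,e},\phi} \cB^{\oL,e} \to \cB)\bigr)
\]
constructed before the theorem statement. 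Unwinding the adjunctions then shows that $(\phi^* \omega_2^{\sharp}, \lambda_2^{\sharp})$ is a quasi-isomorphism at every point if and only if the displayed map $\cB^! \to \cocone(\cA \ten^{\oL}_{\cA^{\oL,e},\phi} \cB^{\oL,e} \to \cB)[-d]$ is a quasi-isomorphism of $\cB^{\oL,e}$-modules, yielding the claimed equivalence. The main obstacle I anticipate will be tracking the boundary term coming from the nullhomotopy $\lambda$ through the Atiyah-class diagram and verifying that its contribution to the pairing exactly matches the cocone differential on $\mathbf{HN}_{R,\bt}(\cA,\cB)$, so that the whole construction descends coherently to evaluation on the cocone class.
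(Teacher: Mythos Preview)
Your proposal is correct and takes essentially the same approach as the paper, which simply states that the proof of Theorem \ref{Morthm} readily adapts; you have spelled out precisely how that adaptation goes, using the cocone of negative cyclic complexes to produce the required nullhomotopy and then rerunning the Atiyah-class/Lefschetz argument in the relative setting.
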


\begin{remark}
An element of the negative cyclic homology group $\HN_{R,d}(\cA,\cB)$
 with the non-degeneracy properties above is precisely the datum of
 a homotopy class of $n$-dimensional left Calabi-Yau structures on the morphism  $\phi \co \cA \to  \cB$ in the terminology of \cite[Definition 4.11]{BravDyckerhoff}. 
 
 On restricting to commutative input, Theorem \ref{MorthmLag} recovers the $(2-d)$-shifted Lagrangian structure on $\Mor_{\cB}^{\comm}\to \Mor_{\cA}^{\comm}$ constructed in \cite[Theorem 5.5(2)]{BravDyckerhoffII}, via the  map $\BiLag(\Mor_{\cA},\Mor_{\cB}; 2-d) \to \Lag(\Mor_{\cA}^{\comm},\Mor_{\cB}^{\comm};2-d)$ from Lemma \ref{commSplemma}.
\end{remark}


\section{Shifted double Poisson structures}\label{poisssn}

\subsection{Shifted double Poisson structures on derived NC affines}\label{affpoisssn}

\subsubsection{Double \tps{$P_n$}{Pn}-algebras}

We now generalise the definitions of  \cite[\S 2.3]{vdBerghDoublePoisson} from algebras to DGAAs, incorporating shifts in the brackets.

\begin{definition}\label{nbracketdef}
Adapting \cite[Definition 2.2.1]{vdBerghDoublePoisson}, we say  that an $n$-shifted 
 $k$-bracket on an  $R$-DGAA $A$ is an  $R$-linear map
\[
\{\{-,\ldots, -\}\} \co  (A_{[-n-1]})^{\ten_R k} \to (A^{\ten_R k})_{[-n-1]} 
\]
which is a derivation $A \to  A^{\ten k}$ in its last argument for the outer bimodule structure
on $A^{\ten k}$ 
and which is cyclically anti-symmetric in the sense that
\[
\tau_{(1\ldots k)} \circ \{\{-,\ldots, -\}\}=  \mp   \{\{-,\ldots, -\}\} \circ \tau_{(1\ldots k)},
\]
where $\mp$ is the relevant Koszul sign, which is $(-1)^{k+1}$ when $n=0$ and  all elements of $A$ are of even degree, and $\tau_{(1\ldots k)}$ is the cyclic permutation.


\end{definition}

\begin{definition}
 Define a double $P_n$-algebra over $R$ to be an $R$-DGAA $A$ equipped with a $(n-1)$-shifted $2$-bracket $\{\{-,-\}\}$ satisfying the double Jacobi identity of \cite[\S 2.3]{vdBerghDoublePoisson}   
\end{definition}
Thus a double $P_1$-algebra concentrated in degree $0$ is just a double Poisson algebra in the sense of \cite[Definition 2.3.2]{vdBerghDoublePoisson}.

\begin{definition}\label{lodaydef}
 Given a double $P_n$-algebra $A$, define the $(n-1)$-shifted Loday bracket $\{-,-\}\co A \ten A \to A_{[n-1]}$ by composing the $2$-bracket $\{\{-,-\}\}$ with the multiplication map $A \ten A \to A$.
\end{definition}

The proof of \cite[Corollary 2.4.6]{vdBerghDoublePoisson} generalises to give the following:
\begin{lemma}\label{quotientDGLAlemma}
If $A$ is a double $P_n$-algebra, then the bracket $\{-,-\}$ makes the shifted quotient $(A/[A,A])_{[1-n]} $ by the commutator into a DG Lie algebra (DGLA). 
\end{lemma}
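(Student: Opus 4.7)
The proof will proceed by generalising the arguments of \cite[\S 2.4]{vdBerghDoublePoisson} to the graded differential setting, working carefully to keep track of Koszul signs which depend on the shift $n$. Write $\{\{a,b\}\} = \sum \{\{a,b\}\}' \otimes \{\{a,b\}\}''$, so that the Loday bracket of Definition \ref{lodaydef} is $\{a,b\} = \sum \{\{a,b\}\}' \cdot \{\{a,b\}\}''$.

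First I would verify that the Loday bracket descends to a well-defined operation on the shifted quotient $(A/[A,A])_{[1-n]}$. For descent in the second argument, apply the derivation property of $\{\{-,-\}\}$ (in the outer bimodule structure on $A \otimes A$) to compute $\{a,bc\}$ and $\{a,cb\}$; multiplying out and comparing, one finds that $\{a, bc - (-1)^{|b||c|}cb\}$ is a sum of graded commutators in $A$, hence vanishes in $A/[A,A]$. Descent in the first argument is subtler and uses cyclic antisymmetry: applying $\tau_{(1\,2)}$ to $\{\{a,b\}\}$ produces $\mp\{\{b,a\}\}$ with explicit Koszul sign, and after composing with multiplication (which is cyclically invariant modulo $[A,A]$), one obtains $\{a,b\} \equiv \mp \{b,a\} \bmod [A,A]$, where the sign matches the graded-antisymmetry required for a Lie bracket on $(A/[A,A])_{[1-n]}$. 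This simultaneously establishes antisymmetry of the induced bracket.

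Next I would derive the graded Jacobi identity on the quotient from the double Jacobi identity. Van den Bergh's double Jacobi identity asserts vanishing of a certain cyclic sum of triple brackets valued in $A^{\otimes 3}$; multiplying out all three tensor factors yields the scalar Jacobiator $\{a,\{b,c\}\} \pm \{b,\{c,a\}\} \pm \{c,\{a,b\}\}$ together with correction terms of the form $\{\{a,b\}\}' \cdot \{\{\{\{a,b\}\}'',c\}\}'' \cdots$ in mixed positions. These correction terms are cyclic permutations of each other in the three tensor factors of $A^{\otimes 3}$, so after composing with multiplication and reducing modulo commutators they cancel in pairs via the cyclic invariance of $\mu \circ \tau$ on $A^{\otimes 3}/[A, A^{\otimes 2}]$. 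This is exactly the graded analogue of the argument in \cite[Lemma 2.4.4, Corollary 2.4.6]{vdBerghDoublePoisson}, with all commutators understood in the graded sense.

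Finally, for the DG part, I would note that $\delta_A$ is a derivation of the associative product, hence induces the standard differential on $A/[A,A]$ and its shift; and since $\{\{-,-\}\}$ is a chain map by definition (it is $R$-linear of the stated degree), $\delta$ is a derivation of the 2-bracket, and composing with $\mu$ and passing to the quotient shows $\delta$ is a derivation of $\{-,-\}$, giving the DGLA axiom. The main obstacle I anticipate is the bookkeeping of Koszul signs in the Jacobi computation: one must verify that the sign conventions in the cyclic antisymmetry of Definition \ref{nbracketdef} (which differ in parity depending on whether $n$ is even or odd) are precisely the ones that make the induced bracket graded-antisymmetric of the correct parity on $(A/[A,A])_{[1-n]}$, and that the correction terms really do cancel cyclically after all signs are accounted for. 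Once these signs are fixed, the rest of the argument is a direct translation of the ungraded case.
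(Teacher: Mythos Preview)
Your proposal is correct and follows essentially the same approach as the paper, which simply states that the proof of \cite[Corollary 2.4.6]{vdBerghDoublePoisson} generalises; you have faithfully outlined what that generalisation entails, citing the same intermediate results (Lemma 2.4.4 and Corollary 2.4.6) and correctly identifying the Koszul-sign bookkeeping as the only new ingredient in the shifted dg setting.
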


\subsubsection{Non-commutative polyvector fields}\label{polsn}

\begin{definition}\label{poldef}
Define the filtered cochain complex of $n$-shifted non-commutative multiderivations (or polyvectors) on an $R$-DGAA  $A$ by
\[
 F^i\widehat{\Pol}^{nc}(A,n):= \prod_{p \ge i}\HHom_{(A^{e})^{\ten p}}(((\Omega^1_{A/R})_{[-n-1]})^{\ten p},[A^{\ten (p+1)}]), 
\]
for $i \ge 0$,
where we follow \cite[Proposition 4.2.1]{vdBerghDoublePoisson} in writing $[A^{\ten (p+1)}]$ for the $A^{\ten p}$-bimodule given by $A^{\ten (p+1)}$ with the $i$th copy of $A$ acting on the $i$th copy of $A$ on the right and on the $(i+1)$th copy of $A$ on the left.
We then write  $\widehat{\Pol}^{nc}(A,n):= F^0\widehat{\Pol}^{nc}(A,n)$. 

There is an associative product on $\widehat{\Pol}^{nc}(A,n)$ given by setting  the product  of a $p$-derivation $\phi$ and a $q$-derivation $\psi$ to be the $(p+q)$-derivation: 
\[
(\phi \cdot \psi)(\alpha_1 \ten \ldots \ten \alpha_p \ten \beta_1 \ten \ldots \ten \beta_q) = \phi(\alpha_1 \ten \ldots \ten \alpha_p) \ten \psi(\beta_1 \ten \ldots \ten \beta_q) \in [A^{\ten (p+1)}]\ten_A[A^{\ten q+1}],
\]
where we regard $[A^{\ten (p+1)}]$ and $[A^{\ten q+1}]$ as $A$-bimodules via the outer action.
This respects the filtration in the sense that $(F^i\widehat{\Pol}^{nc}(A,n))\cdot (F^j\widehat{\Pol}^{nc}(A,n))\subset F^{i+j}\widehat{\Pol}^{nc}(A,n)$.
\end{definition}

\begin{definition}\label{polcycdef}
We  define the filtered cochain complex of $n$-shifted cyclic multiderivations  on an $R$-DGAA $A$ by
\[
 F^i\widehat{\Pol}^{nc}_{\cyc}(A,n):= \prod_{p \ge i} (\HHom_{(A^{e})^{\ten p}}(((\Omega_A^1)_{[-n-1]})^{\ten p}, \{A^{\ten p}\}))^{C_p},
\]
for $i \ge 1$, where (following the proof of \cite[Proposition 4.1.2]{vdBerghDoublePoisson}) $\{A^{\ten p}\}:= [A^{\ten (p+1)}]\ten_{A^{e}}A$ is $A^{\ten p}$ given the $A^{\ten p}$-bimodule structure for which the $i$th copy of $A$ acts on the right on the $i$th copy of $A$, and acts on the left on the $(i+1 \mod p)$th copy of $A$.
We then set $F^0\widehat{\Pol}^{nc}_{\cyc}(A,n):= A/[A,A] \oplus F^1\widehat{\Pol}^{nc}_{\cyc}(A,n)$.
\end{definition}
In particular note that  a cyclic $p$-derivation corresponds to a  $p$-bracket in the sense of Definition \ref{nbracketdef}. 

\begin{definition}\label{cyclictracedef} 
Adapting \cite[Proposition 4.1.1]{vdBerghDoublePoisson},   the trace isomorphism between cyclic invariants and coinvariants gives a natural filtered map
\[
\tr \co \widehat{\Pol}^{nc}(A,n)/[\widehat{\Pol}^{nc}(A,n),\widehat{\Pol}^{nc}(A,n)]\to  \widehat{\Pol}^{nc}_{\cyc}(A,n) 
\]
from the quotient by the commutator of the associative multiplication. This is a filtered quasi-isomorphism whenever $\Omega^1_A$ is perfect and cofibrant as an $A^{e}$-module, and in particular whenever $A$ is finitely presented and cofibrant.
\end{definition}


\begin{proposition}\label{bracketprop}
There is a natural  bracket $\{-,-\}$ making  $\widehat{\Pol}^{nc}_{\cyc}(A,n)^{[n+1]}$ into a differential graded Lie algebra (DGLA) over $R$, satisfying $\{F^i,F^j\} \subset F^{i+j-1}$. 

There  is also an
$R$-bilinear map 
\[
 \{-,-\}^{\smile} \co \widehat{\Pol}^{nc}_{\cyc}(A,n) \by \widehat{\Pol}^{nc}(A,n) \to \widehat{\Pol}^{nc}(A,n)^{[-n-1]} 
\]
which lifts $\{-,-\}$ in the sense that $\tr(\{\pi, \alpha\}^{\smile})=\{\pi, \tr\alpha\}$. This also satisfies   $\{F^i,F^j\}^{\smile} \subset F^{i+j-1}$, and is a derivation in  its second argument. Given   a  $p$-bracket $\pi$ and an element $a \in A$, the $(p-1)$-derivation $\{\pi, a\}^{\smile}$ is given by   $\pm\pi(-,-, \ldots, -,a)$.
\end{proposition}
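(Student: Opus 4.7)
The plan is to adapt the construction of the double Schouten--Nijenhuis bracket from \cite[\S 3 and Proposition 4.2.3]{vdBerghDoublePoisson} to $n$-shifted polyvectors on a DGAA, tracking Koszul signs throughout. Since $\widehat{\Pol}^{nc}(A,n)$ is generated as an associative algebra by $A$ and $\DDer_R(A,A^{e})$, it suffices to define $\{-,-\}^{\smile}$ on the generators and then extend by a biderivation rule with respect to the product of Definition \ref{poldef}.

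First I would define $\{-,-\}^{\smile}$ on double derivations: for $\phi,\psi\in \HHom_{A^{e}}((\Omega^1_A)_{[-n-1]},[A^{\ten 2}])$ there are two natural compositions obtained by plugging one of the two outputs of $\phi$ into the unique input of $\psi$ (with the displaced tensor factors rearranged in $[A^{\ten 3}]$) as in \cite[\S 3.2]{vdBerghDoublePoisson}, and $\{\phi,\psi\}^{\smile}$ is their graded commutator with signs dictated by the shift. For $a\in A$ and a $p$-derivation $\pi$, I take $\{\pi,a\}^{\smile}:=\pm\pi(-,\ldots,-,a)$, as asserted in the proposition. These choices determine $\{-,-\}^{\smile}$ uniquely on all of $\widehat{\Pol}^{nc}(A,n)$ through the biderivation rule
\[
\{\pi,\alpha\cdot\beta\}^{\smile}=\{\pi,\alpha\}^{\smile}\cdot\beta\pm\alpha\cdot\{\pi,\beta\}^{\smile},
\]
and analogously in the first argument; the filtration bound $\{F^{i},F^{j}\}^{\smile}\subset F^{i+j-1}$ is then immediate from arity counting.

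To obtain the bracket $\{-,-\}$ on the cyclic quotient, I would show that $\{-,-\}^{\smile}$ descends through the trace $\tr$ of Definition \ref{cyclictracedef}: because $\tr$ kills commutators of the associative product and $\{-,-\}^{\smile}$ is a biderivation, the value of $\tr\{\pi,\alpha\}^{\smile}$ depends only on $\tr\pi$ and $\tr\alpha$, yielding both the asserted map $\widehat{\Pol}^{nc}_{\cyc}(A,n)\by \widehat{\Pol}^{nc}(A,n)\to \widehat{\Pol}^{nc}(A,n)^{[-n-1]}$ and, after composing with a second $\tr$, the bracket $\{-,-\}$ on $\widehat{\Pol}^{nc}_{\cyc}(A,n)^{[n+1]}$; the compatibility formula $\tr\{\pi,\alpha\}^{\smile}=\{\pi,\tr\alpha\}$ is then automatic. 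Graded antisymmetry of the cyclic bracket follows from $C_p$-invariance in Definition \ref{polcycdef} together with the fact that the two compositions defining $\{\phi,\psi\}^{\smile}$ are interchanged by a cyclic rotation. Compatibility with the structural differential $\delta$ is obtained by observing that $\delta_A$ is a cyclic $1$-cocycle acting as a Hamiltonian, so $\delta=\{\delta_A,-\}^{\smile}$ up to the cyclic quotient, and the graded Jacobi identity then upgrades the graded Lie structure to a DGLA.

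The main obstacle is the graded Jacobi identity for $\{-,-\}$ on $\widehat{\Pol}^{nc}_{\cyc}(A,n)^{[n+1]}$. The strategy is to reduce to the arity--one case (double derivations), where it is an explicit calculation analogous to \cite[Proposition 3.2.1]{vdBerghDoublePoisson} but with shifted degrees and Koszul signs; the extension to all arities is automatic because $\{-,\alpha\}^{\smile}$ is a graded derivation of the associative product, so the Jacobiator is itself a graded triderivation and vanishes once it vanishes on generators. The only subtle bookkeeping concerns propagating the $(n+1)$-shift consistently through the cyclic permutations in Definition \ref{polcycdef}, but the relevant signs are forced by the biderivation rule together with specialisation to the case $n=0$ recovering \cite[Theorem 3.2.2]{vdBerghDoublePoisson}.
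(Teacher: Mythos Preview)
Your approach has a genuine gap: you treat $\{-,-\}^{\smile}$ as a derivation in \emph{both} arguments and extend from generators via a biderivation rule, but $\{-,-\}^{\smile}$ is only a derivation in its second argument (as the proposition itself asserts). The Loday-type bracket coming from a double bracket fails the Leibniz rule in its first argument; instead one has a twisted identity involving the double bracket (cf.\ \cite[Lemma 2.4.1 and Proposition 2.6.5]{vdBerghDoublePoisson}). Consequently both your descent argument (``$\tr$ kills commutators and $\{-,-\}^{\smile}$ is a biderivation, so $\tr\{\pi,\alpha\}^{\smile}$ depends only on $\tr\pi$'') and your Jacobi argument (``the Jacobiator is a triderivation, so check on generators'') break down: neither step is valid for an operation which is a derivation only on one side.

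The missing ingredient is the \emph{double} bracket $\{\{-,-\}\}$ on $\widehat{\Pol}^{nc}(A,n)$, taking values in the completed tensor square $\widehat{\Pol}^{nc}(A,n)\,\hat{\ten}\,\widehat{\Pol}^{nc}(A,n)$. This is the object which genuinely extends from generators by a biderivation rule, and your formula on $\phi,\psi$ (compositions landing in $[A^{\ten 3}]$) is really a description of a piece of $\{\{\phi,\psi\}\}$, not of $\{\phi,\psi\}^{\smile}$. The paper follows exactly this route: it equips $\widehat{\Pol}^{nc}(A,n)$ with a completed double $P_{n+2}$-algebra structure by adapting \cite[Proposition 1.6, \S 3.2]{vdBerghDoublePoisson}, then obtains $\{-,-\}^{\smile}$ as the associated Loday bracket (compose $\{\{-,-\}\}$ with multiplication) and invokes the general machinery \cite[Lemma 2.4.1 and Corollary 2.4.6]{vdBerghDoublePoisson} to get the descent to the cyclic quotient and the DGLA structure there. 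Your treatment of $\delta$ is also off: $\delta_A$ is a derivation $A\to A$, not a double derivation, so it is not an element of $\widehat{\Pol}^{nc}_{\cyc}$ and cannot serve as a Hamiltonian; compatibility with $\delta$ follows instead because $\delta$ is a derivation of both the product and the double bracket.
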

\begin{proof}
Since the complex $\DDer_R(A,A^{e})$ of double derivations is isomorphic to $\HHom_{A^e}(\Omega^1_A,A^e)$, 
the associative multiplication defines a natural map from the completed tensor algebra of   $\DDer_R(A,A^{e})^{[n+1]}$ to the DGAA $\widehat{\Pol}^{nc}(A,n)$, and this is a quasi-isomorphism whenever $\DDer_R(A,A^{e})$ is perfect and cofibrant as an $A^e$-module. Adapting \cite[Proposition 1.6, \S 3.2]{vdBerghDoublePoisson}, this gives rise to a form of  completed double $P_{n+2}$-algebra structure on $\widehat{\Pol}^{nc}(A,n)$, with the double bracket $\{\{-,-\}\}$ taking values in the completion of   $\widehat{\Pol}^{nc}(A,n)\ten \widehat{\Pol}^{nc}(A,n)$ with respect to the filtration $F$. 
This double bracket combines with the associative multiplication to give a Loday bracket on $\widehat{\Pol}^{nc}(A,n)_{[-n-1]}$ which induces the maps $\{-,-\}^{\smile}
$ and $\{-,-\}$ by the same reasoning as  \cite[Lemma 2.4.1 and Corollary 2.4.6]{vdBerghDoublePoisson}. The filtration properties and expression for $\{\pi, a\}^{\smile}$ follow immediately from the explicit description in \cite[\S 3.2]{vdBerghDoublePoisson}.
%
%
\end{proof}

\subsubsection{Double Poisson structures}

We now extend the definition of double Poisson structures to incorporate higher homotopical information.

\begin{definition}\label{mcPLdef}
 Given a   DGLA $(L, \{-,-\})$, define the the Maurer--Cartan set by 
\[
\mc(L):= \{\omega \in  L^{1}\ \,|\, \delta\omega + \half\{\omega,\omega\}=0 \in  \bigoplus_n L^{2}\}.
\]

Following \cite{hinstack}, define the Maurer--Cartan space $\mmc(L)$ (a simplicial set) of a nilpotent  DGLA $L$ by
\[
 \mmc(L)_k:= \mc(L\ten_{\Q} \Omega^{\bt}(\Delta^k)),
\]
for the differential graded commutative algebras $ \Omega^{\bt}(\Delta^n)$ of de Rham polynomial forms on the $k$-simplex, as in Definition \ref{PreSpdef}.
\end{definition}

\begin{definition}
Given an inverse system $L=\{L_{\alpha}\}_{\alpha}$ of nilpotent DGLAs, define
\[
 \mc(L):= \Lim_{\alpha} \mc(L_{\alpha}) \quad  \mmc(L):= \Lim_{\alpha} \mmc(L_{\alpha}).
\]
Note that  $\mc(L)= \mc(\Lim_{\alpha}L_{\alpha})$, but $\mmc(L)\ne \mmc(\Lim_{\alpha}L_{\alpha}) $. 
\end{definition}

\begin{definition}\label{poissdef}
Define an $R$-linear  $n$-shifted double Poisson structure on $A$ to be an element of
\[
 \mc(F^2 \widehat{\Pol}^{nc}_{\cyc}(A,n)^{[n+1]}), 
\]
 and the space $\cD\cP(A,n)$ of $R$-linear  $n$-shifted double Poisson structures on $A$ to be given by the simplicial 
set
\[
 \cD\cP(A,n):= \mmc( \{F^2 \widehat{\Pol}^{nc}_{\cyc}(A,n)^{[-n-1]}/F^{i+2}\}_i).
\]

Also write $\cD\cP(A,n)/F^{i+2}:= \mmc(F^2 \widehat{\Pol}^{nc}_{\cyc}(A,n)^{[-n-1]}/F^{i+2})$, so $\cD\cP(A,n)= \Lim_i \cD\cP(A,n)/F^{i+2}$.
\end{definition}

\begin{remark}\label{DPinterpretrmk}
Observe that elements of $\cD\cP_0(A,n)= \mc(F^2 \widehat{\Pol}^{nc}_{\cyc}(A,n)^{[n+1]})$ correspond 
to infinite sums $\pi = \sum_{i \ge 2}\pi_i$ with $\pi_i$ a cyclic shifted $i$-bracket, 
 satisfying $\delta(\pi_i) + \half \sum_{j+k=i+1} \{\pi_j,\pi_k\}=0$. 
This is a higher generalisation of the double Jacobi identity, so $\pi$ defines a form of double $P_{n+1}$-infinity algebra structure on $A$, generalising  the double Poisson infinity-algebras of \cite[Definition 4.1]{schedlerPoissonYangBaxter} 
when $n=0$, the difference being that our operations are cyclic rather than symmetric. 

In the notation of 
\cite[Proposition 5.7 and Corollary 5.10]{lerayProtoperads2Koszul}, an $n$-shifted double Poisson structure in our sense is an algebra for the dg protoperad $\cA s\boxtimes_c^{\mathrm{Val}} s^n\mathrm{Ind}(\cD\cL ie)_{\infty}$, where $s$ denotes suspension. 

In particular, if $\pi_i=0$ for all $i \ge 3$, then $\pi_2$ gives $A$ the structure of a double $P_{n+1}$-algebra. In general, the $n$-shifted double Poisson structure $\{\pi_i\}_i$ always gives rise to an   $L_{\infty}$-algebra structure on the quotient $(A/[A,A])_{[-n]}$ by the commutator.
\end{remark}

\begin{remark}\label{poissoncommrmk}
 Observe that for the abelianisation functor $A \mapsto A^{\comm}$ from DGAAs to CDGAs of Definition \ref{commdefhere}, we have a natural map  $F^i\widehat{\Pol}^{nc}_{\cyc}(A,n) \to F^i\widehat{\Pol}(A^{\comm},n)$, for the complex $\widehat{\Pol}$ of polyvectors from \cite{poisson}. This map is compatible with the Lie bracket, so gives a natural map $\cD\cP(A,n) \to  \cP(A^{\comm},n)$ from the space of $n$-shifted double Poisson structures on $A$ to the space of $n$-shifted  Poisson structures on $A^{\comm}$.

Moreover, we will see in Remark \ref{CostelloRozenblyumRmk} that for any Frobenius algebra $S$, there is a natural map $\cD\cP(A,n) \to  \cD\cP(\Pi_{S/R}A,n)$, giving $n$-shifted double Poisson structures on the Weil restriction of scalars $\Pi_{S/R}A$ of \cite[\S \ref{NCstacks-weilrestrn}]{NCstacks}; in particular, this applies when $S$ is the matrix algebra $\Mat_k$. As above, this in turn maps to the space $ \cP((\Pi_{S/R}A)^{\comm},n)$, which when $S=\Mat_k$ gives us shifted Poisson structures on the affine scheme representing framed rank $k$ representations of $A$.
\end{remark}

\begin{remark}[Quantisation]
 It is natural to ask whether there is any notion of quantisation for shifted double Poisson structures, such that a quantisation on $A$ gives rise to a quantisation of the induced shifted Poisson structure on the commutative quotient $A^{\comm}$. However, it is not at all clear what form such quantisations could take. For instance, in the $n=0$ case we would have to induce a non-commutative associative deformation of $A^{\comm}$, and in the $n=-1$ case,  a BV algebra deformation.

An answer of sorts might be provided by  wheelgebras as in \cite{GinzburgSchedlerDiffOps}, where  to any DGAA there is associated a commutative algebra $\cF(A)$ in the monoidal category of wheelspaces, giving rise to a notion of non-commutative differential operators, and hence of BV algebras. In \cite[Remark 3.5.19]{GinzburgSchedlerDiffOps}, it is observed that every double Poisson algebra gives rise to the structure of a wheeled Poisson algebra on $\cF(A)$, i.e. a Poisson algebra in the category of wheelspaces, though not all wheeled Poisson structures arise in this way. The same will automatically be true for double $P_k$-algebras, and it seems reasonable to expect that the construction will extend to shifted Poisson structures, with strong homotopy  double $P_k$-algebras $A$  giving rise to strong homotopy $P_k$-algebras $\cF(A)$ in wheelspaces.  

For $n>0$, this would automatically give quantisations of $n$-shifted double Poisson structures, in the form of $E_{n+1}$-algebra (or, strictly speaking, $BD_{n+1}$-algebra) deformations of $\cF(A)$, by the operadic formality equivalence $E_{n+1}\simeq P_{n+1}$, or more precisely by the filtered version $BD_{n+1}\simeq P_{n+1}\brh$. For the other cases with established quantisations ($n=0,-1,-2$), there would be significantly more work involved in adapting commutative arguments to the wheelgebra setting. In all cases, there is also the drawback that the space of wheeled Poisson structures is much larger than the space of double Poisson structures.
\end{remark}

Now, given a $k$-shifted cyclic $2$-bracket 
 $\pi_2 \in  \z^0(\HHom_{(A^{\oL,e})^{\ten 2}}((\Omega_A^1)^{\ten 2}, \{A^{\ten 2}\}_{[k]}))^{C_2}$ as in Definition \ref{nbracketdef}, there is an associated morphism
\[
 \pi_2^{\flat} \co \Omega^1_{A/R} \to \DDer_R(A,A^{\oL,e})_{[k]}
\]
given by $\pi_2^{\flat}(\omega)(f) := \pi_2(\omega \ten df)$. The cyclic symmetry of $\pi_2$ then ensures that this descends to a map
\[
 \pi_2^{\flat} \co \Omega^1_{A/R,\cyc} \to \DDer_R(A,A)_{[k]}
\]
by inner multiplication.

\begin{definition} \label{nondegdef}
For an $R$-DGAA $A$ with $A^e \simeq A^{\oL,e}$ (e.g. $A$ cofibrant) for which   $\Omega^1_{A/R} $ is perfect as an $A^{e}$-module,  we say that an  $n$-shifted double Poisson structure $\pi = \sum_{i \ge 2}\pi_i $ is non-degenerate if $\pi_{2} \co ((\Omega^1_{A/R})_{[-n-1]})^{\ten 2} \to (A^{\ten 2})_{[-n-2]}$ 
induces a quasi-isomorphism 
\[
\pi_{2}^{\flat}\co  (\Omega^1_{A/R})_{[-n]} \to \DDer_R(A,A^{e}).
\]

We then define $\cD\cP(A,n)^{\nondeg}\subset \cD\cP(A,n)$ to consist of the non-degenerate elements --- this is a union of path-components.
\end{definition}

\subsubsection{The canonical tangent vector \tps{$\sigma$}{sigma}}

The space $\cD\cP(A,n) $ admits an action of $\bG_m(R_0)$, which is inherited from the scalar multiplication on $\Pol(A,n)$ in which $\HHom_{(A^{e})^{\ten p}}((\Omega_A^1)^{\ten p}, \{A^{\ten p}\})$
 is given weight $p-1$. Differentiating this action gives us a global tangent vector on $\cD\cP(A,n) $, as follows. 

Take $\eps$ to be a variable of degree $0$, with $\eps^2=0$. 
Observe that the DGLA structure on $\widehat{\Pol}^{nc}_{\cyc}(A,n)^{[-n-1]}$ automatically makes  $\widehat{\Pol}^{nc}_{\cyc}(A,n)^{[-n-1]}\ten_{\Q} \Q[\eps]$ a DGLA, and also that 
\[
\Pol_{\cyc}(A,n)\ten_{\Q} \Q[\eps] \cong (\Pol(A,n)\ten_{\Q} \Q[\eps])/[ \Pol(A,n)\ten_{\Q} \Q[\eps], \Pol(A,n)\ten_{\Q} \Q[\eps]]. 
\]

\begin{definition}\label{Tpoissdef} 
Define the tangent spaces
\begin{eqnarray*}
 T\cD\cP(A,n)&:=& \mmc( \{F^2 \widehat{\Pol}^{nc}_{\cyc}(A,n)^{[-n-1]}\ten_{\Q} \Q[\eps]/F^{i+2}\}_i)\\
T\cD\cP(A,n)/F^{i+2}&:=& \mmc( F^2 \widehat{\Pol}^{nc}_{\cyc}(A,n)^{[-n-1]}\ten_{\Q} \Q[\eps]/F^{i+2}).
\end{eqnarray*}
 \end{definition}
These are simplicial sets over $\cD\cP(A,n)$ (resp. $\cD\cP(A,n)/F^{i+2}$), fibred in simplicial abelian groups. 

\begin{definition}\label{cPdef} 
Given $\pi \in  \cD\cP_0(A,n)$, observe that the Maurer--Cartan condition implies that $\delta+\{\pi,-\}^{\smile}$ defines a square-zero derivation on $\widehat{\Pol}^{nc}(A,n) $, 
and denote the resulting complex by 
\[
\widehat{\Pol}^{nc}_{\pi}(A,n).
\]
The product and double bracket on non-commutative polyvectors make this a form of completed double  $P_{n+2}$-algebra, and it inherits the filtration $F$. Write $\widehat{\Pol}^{nc}_{\pi,\cyc}(A,n) := (\widehat{\Pol}^{nc}_{\cyc}(A,n),\delta+\{\pi,-\})$ for the quotient of $\Pol_{\pi}(A,n)$ by the commutator.

Given $\pi \in  \cD\cP_0(A,n)/F^p$, we define $\widehat{\Pol}^{nc}_{\pi}(A,n)/F^p$ similarly. This is a DGAA, and $ F^1\Pol_{\pi}(A,n)/F^p$ is a form of completed double $P_{n+2}$-algebra, because $F^i\cdot F^j \subset F^{i+j}$ and $\{\{F^i,F^j\}\} \subset F^{i+j-1}$.
\end{definition}

The following is a standard consequence of the map
\[
 \widehat{\Pol}^{nc}_{\cyc}(A,n)^{[-n-1]}\ten_{\Q} \Q[\eps]\to \widehat{\Pol}^{nc}_{\cyc}(A,n)^{[-n-1]}
\]
being a square-zero extension of DGLAs:
\begin{lemma}
 The  fibre $T_{\pi}\cD\cP(A,n)$ of $T\cD\cP(A,n)$  over $\pi \in \cD\cP(A,n)$ is canonically homotopy equivalent to the Dold--Kan denormalisation of the good truncation   $\tau^{\le 0} (F^2\widehat{\Pol}^{nc}_{\pi,\cyc}(A,n)^{[n+2]})$. 
In particular, its
 homotopy groups  are given by
\[
 \pi_iT_{\pi}\cD\cP(A,n)= \H^{n+2-i}(F^2 \widehat{\Pol}^{nc}_{\cyc}(A,n), \delta +\{\pi,-\}).
\]
The corresponding statements for $T_{\pi}\cD\cP(A,n)/F^{i+2}$ also hold.
\end{lemma}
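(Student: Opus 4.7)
The plan is to identify $T_\pi\cD\cP(A,n)$ with the Maurer-Cartan space of an abelian DGLA obtained by twisting, and then to invoke the standard Dold-Kan presentation of Maurer-Cartan spaces of nilpotent abelian DGLAs.

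First I would unpack the square-zero structure. Writing $L := F^2\widehat{\Pol}^{nc}_{\cyc}(A,n)^{[n+1]}$, or more properly each truncation $L/F^{i+2}$, the projection $L \ten_{\Q} \Q[\eps] \onto L$ sending $\eps \mapsto 0$ is a square-zero extension of DGLAs: its kernel $\eps L$ is an abelian ideal, isomorphic as a DG $L$-module to $L$ itself. Tensoring further with $\Omega^{\bt}(\Delta^k)$ preserves this structure, and the Maurer-Cartan equation for a lift $\tilde\pi = \pi + \eps \pi'$ of a simplex $\pi \in \mmc(L)_k$ reduces, using $\eps^2 = 0$, to the linear cocycle condition
\[
\bigl(\delta + \{\pi,-\}\bigr)\pi' \;=\; 0,
\]
where $\pi' \in (L \ten \Omega^{\bt}(\Delta^k))^1$. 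Consequently the fibre $T_\pi\cD\cP(A,n)/F^{i+2}$ is precisely $\mmc$ of the abelian DGLA $L_\pi/F^{i+2}$, where $L_\pi := (L,\, \delta + \{\pi,-\})$ denotes the complex with differential twisted by $\pi$; its bracket plays no role inside the square-zero ideal.

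Next I would invoke the standard equivalence, for any nilpotent abelian DGLA $M$, between the simplicial set $\mmc(M)$, defined level-wise by $\mmc(M)_k = \z^1(M \ten_{\Q} \Omega^{\bt}(\Delta^k))$, and the Dold-Kan denormalisation of the good truncation $\tau^{\le 0}(M^{[1]})$. A natural quasi-isomorphism is furnished by fibre integration on the simplex together with the simplicial de Rham theorem. Applied to $M = L_\pi/F^{i+2}$, combined with the shift identity $L_\pi^{[1]} = F^2\widehat{\Pol}^{nc}_{\pi,\cyc}(A,n)^{[n+2]}$, this yields
\[
T_\pi\cD\cP(A,n)/F^{i+2} \;\simeq\; \mathrm{DK}\bigl(\tau^{\le 0}(F^2\widehat{\Pol}^{nc}_{\pi,\cyc}(A,n)^{[n+2]}/F^{i+2})\bigr),
\]
and the identification $\pi_i = \H^{n+2-i}$ is then the usual Dold-Kan dictionary $\pi_i(\mathrm{DK}\,\tau^{\le 0}C) = \H^{-i}(C)$. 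The global statement for $T_\pi\cD\cP(A,n)$ follows by passing to the inverse limit over $i$, which commutes with $\mathrm{DK}$ and with $\tau^{\le 0}$ termwise.

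The argument is more bookkeeping than genuine obstruction; the only point requiring any care is verifying that the comparison between the de Rham and Dold-Kan realisations of an abelian DGLA is a homotopy equivalence at each nilpotent stage $L/F^{i+2}$, so that one can pass cleanly to the inverse limit at the end.
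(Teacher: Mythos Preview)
Your proposal is correct and follows precisely the approach the paper indicates: the paper does not give an explicit proof, merely stating that the lemma is ``a standard consequence of the map $\widehat{\Pol}^{nc}_{\cyc}(A,n)^{[-n-1]}\ten_{\Q} \Q[\eps]\to \widehat{\Pol}^{nc}_{\cyc}(A,n)^{[-n-1]}$ being a square-zero extension of DGLAs.'' Your write-up unpacks exactly this standard argument---reducing the fibre to the Maurer--Cartan space of the abelian twisted DGLA $L_\pi$, then invoking the Dold--Kan comparison for abelian DGLAs---so there is nothing to add.
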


Now observe that the map
\begin{align*}
 \id +\sigma\eps \co F^2 \widehat{\Pol}^{nc}_{\cyc}(A,n)^{[-n-1]} &\to F^2 \widehat{\Pol}^{nc}_{\cyc}(A,n)^{[-n-1]}\ten_{\Q} \Q[\eps]\\
 \sum_{i \ge 2} \alpha_i  &\mapsto \sum_{i \ge 2} (\alpha_i+ (i-1)\alpha_i\eps)
\end{align*}
is a morphism of filtered DGLAs, for $\alpha_i \co (\Omega_A^1)^{\ten p} \to \{A^{\ten p}\}$. This can be seen either by direct calculation or by observing that $\sigma$ arises by differentiating the $\bG_m$-action on $\Pol^{nc}$.

\begin{definition}\label{sigmadef}
 Define the canonical tangent vector $(\id,\sigma) \co \cD\cP(A,n) \to  T\cD\cP(A,n)$ on the space of $n$-shifted Poisson structures by applying $\mmc$ to the morphism $\id +\sigma\eps$ of DGLAs.
\end{definition}

Explicitly, this sends $\pi= \sum \pi_i $ to $\sigma(\pi)=\sum_{i \ge 2} (i-1)\pi_i \in T_{\pi}\cD\cP(A,n)$, which thus has the property that $\delta \sigma(\pi) +\{\pi, \sigma(\pi)\}=0$.

The map $(\id,\sigma)$ preserves the cofiltration in the sense that it comes from a system of  maps $(\id,\sigma) \co  \cD\cP(A,n)/F^{i+2} \to  T\cD\cP(A,n)/F^{i+2} $.

\subsection{Comparison with bisymplectic structures on derived  NC affines}\label{equivaffinesn}

\subsubsection{Compatible pairs}\label{compsn}

We will now develop the notion of compatibility between a pre-bisymplectic structure and a double Poisson structure, analogous to \cite[\S \ref{poisson-compsn}]{poisson}.  

\begin{definition}\label{mudef}
Given  $\pi \in (F^2\widehat{\Pol}^{nc}_{\cyc}(A,n)/F^{p})^{n+2}$, define 
\[
 \mu(-,\pi) \co \DR(A/R)/F^{p} \to \widehat{\Pol}^{nc}(A,n)/F^{p}
\]
to be the  morphism of graded algebras given on generators $a,df$ for $a,f \in A$  by 
\begin{align*}
\mu(a,\pi)&:= a,\\
 \mu(df, \pi)&:=\{\pi,f\}^{\smile}, 
\end{align*}
for the bilinear map $\{-,-\}^{\smile} \co \widehat{\Pol}^{nc}_{\cyc}(A,n) \by \widehat{\Pol}^{nc}(A,n) \to \widehat{\Pol}^{nc}(A,n)_{[n+1]}$ of Proposition \ref{bracketprop} lifting the Lie bracket on $\widehat{\Pol}^{nc}_{\cyc}(A,n)$.

Given $b \in \widehat{\Pol}^{nc}(A,n)/F^{p}$, we then  define 
\[
 \nu(-, \pi, b) \co \DR(A/R)/F^{p} \to \widehat{\Pol}^{nc}(A,n)/F^{p}
\]
by setting $\mu(\omega,\pi +b\eps)= \mu(\omega,\pi)+ \nu(-, \pi, b)\eps$ for $\eps^2=0$. More explicitly, $\nu(-, \pi, b)$ is  the $A$-linear derivation with respect to the ring homomorphism $ \mu(-,\pi) $ given on generators 
 $a,df$ for $a,f \in A$  by 
\begin{align*}
\nu(a,\pi,b) &:=0\\
 \nu(df, \pi, b)&:= \{b,f\}^{\smile}. 
\end{align*}
\end{definition}
To see that these are well-defined in the sense that they descend to the quotients by $F^{p}$, observe that because $\pi \in F^2$, contraction has the property that
\[
 \mu(\Omega^1, \pi) \subset  F^1, \quad \nu(\Omega^1, \pi, b) \subset F^1,
\]
it follows that $\mu(F^p, \pi)\subset F^p$ and $\nu(F^p,\pi,b)   \subset F^p$ by multiplicativity. 

Moreover, note that since $\mu$ is a ring homomorphism and $\nu$ a derivation, we may quotient by commutators and apply the trace map of Definition \ref{cyclictracedef}
  to obtain filtered maps
\begin{align*}
 \tr \mu(-,\pi) \co \DR_{\cyc}(A/R)/F^{p} &\to \widehat{\Pol}^{nc}_{\cyc}(A,n)/F^{p}\\ 
\tr \nu(-,\pi,b) \co \DR_{\cyc}(A/R)/F^{p} &\to \widehat{\Pol}^{nc}_{\cyc}(A,n)/F^{p}.
\end{align*}


\begin{lemma}\label{keylemma} 
 For $\omega \in \DR(A)/F^p$ and $\pi \in F^2\widehat{\Pol}^{nc}_{\cyc}(A,n)^{n+2}/F^p$, we have
\begin{align*}
\{\pi,\mu(\omega, \pi)\}^{\smile} = \mu(d\omega, \pi) + \half \nu(\omega, \pi, \{\pi,\pi\}),\\
\delta_{\pi}\mu(\omega, \pi) = \mu(D\omega, \pi) + \nu(\omega, \pi,\kappa(\pi)),
\end{align*}
where $\delta_{\pi}= \{\delta + \pi,-\}^{\smile}$ lifts the differential on  $T_{\pi}\cD\cP(A,n)/F^{p}$, with  $D= d \pm \delta$  the total differential on $(F^2\DR(A)/F^p) $, and $ \kappa(\pi)=\{\delta, \pi\} + \half \{\pi,\pi\}$. 
\end{lemma}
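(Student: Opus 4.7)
The idea is to verify both identities by reducing to generators of the de Rham algebra $\DR(A/R)$, exploiting the fact that every map appearing is a derivation with respect to the ring homomorphism $\mu(-,\pi)$. More precisely, I note first that the map $\mu(-,\pi)\co\DR(A/R) \to \widehat{\Pol}^{nc}(A,n)$ is multiplicative, while $\{\pi,-\}^{\smile}$, $\delta$, and hence $\delta_{\pi}=\delta+\{\pi,-\}^{\smile}$ are derivations on $\widehat{\Pol}^{nc}(A,n)$ by Proposition \ref{bracketprop}. Likewise, $D=d\pm\delta$ is a derivation on $\DR(A/R)$, and $\nu(-,\pi,b)$ is a $\mu(-,\pi)$-derivation by construction. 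Since $\DR(A/R)$ is generated as a graded algebra by $A$ together with $dA$, it therefore suffices to check each identity on generators of those two types.

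For the first identity, on $a\in A$ both sides reduce immediately to $\{\pi,a\}^{\smile}$, since $\mu(a,\pi)=a$, $\mu(da,\pi)=\{\pi,a\}^{\smile}$, and $\nu(a,\pi,\{\pi,\pi\})=0$. On $df$ for $f\in A$, the left-hand side becomes $\{\pi,\{\pi,f\}^{\smile}\}^{\smile}$, and the right-hand side becomes $\half\{\{\pi,\pi\},f\}^{\smile}$ (since $d\circ d=0$). The whole problem therefore reduces to the derived bracket identity
\[
\{\pi,\{\pi,f\}^{\smile}\}^{\smile}=\tfrac{1}{2}\{\{\pi,\pi\},f\}^{\smile},
\]
which is the MC-consequence of graded Jacobi for the Loday-type bracket $\{-,-\}^{\smile}$ applied to the odd element $\pi\in\widehat{\Pol}^{nc}_{\cyc}(A,n)^{[n+1]}$; this Jacobi identity is part of the completed double $P_{n+2}$ structure supplied by Proposition \ref{bracketprop}. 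This is the step I expect to require the most care: because $\{-,-\}^{\smile}$ is only a true Lie bracket after passing to the cyclic quotient via $\tr$, the Jacobi-type identity has to be read off from the double bracket calculus of \cite{vdBerghDoublePoisson} (the analogue of \cite[Lemma 2.4.1 and Corollary 2.4.6]{vdBerghDoublePoisson} in our shifted completed setting), and the signs must be tracked through the degree shift by $n+1$.

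For the second identity, I would deduce it from the first rather than redo the generator check from scratch. Expanding $\delta_{\pi}=\delta+\{\pi,-\}^{\smile}$ and applying the first identity gives
\[
\delta_{\pi}\mu(\omega,\pi) = \delta\mu(\omega,\pi) + \mu(d\omega,\pi) + \tfrac{1}{2}\nu(\omega,\pi,\{\pi,\pi\}),
\]
so it remains to prove the auxiliary compatibility
\[
\delta\mu(\omega,\pi) = \mu(\delta\omega,\pi) + \nu(\omega,\pi,\delta\pi),
\]
after which linearity of $\nu$ in its third argument yields $\nu(\omega,\pi,\delta\pi)+\tfrac{1}{2}\nu(\omega,\pi,\{\pi,\pi\}) = \nu(\omega,\pi,\kappa(\pi))$ and $\mu(\delta\omega,\pi)+\mu(d\omega,\pi)=\mu(D\omega,\pi)$. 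The auxiliary identity is again proved by the derivation-on-generators argument: on $a\in A$ both sides equal $\delta a$, and on $df$ the Leibniz rule for $\delta$ applied to $\{\pi,f\}^{\smile}$ produces $\{\delta\pi,f\}^{\smile}\pm\{\pi,\delta f\}^{\smile}$, which matches $\nu(df,\pi,\delta\pi)+\mu(\delta(df),\pi)$ once one uses the standard anticommutation $\delta d=-d\delta$ on $\DR(A/R)$. With that in hand, assembling the three pieces gives the second identity as stated.
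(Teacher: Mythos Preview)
Your proposal is correct and follows essentially the same route as the paper's proof, which refers back to \cite[Lemma \ref{poisson-keylemma}]{poisson} and isolates precisely the identity $\{\pi,\{\pi,f\}^{\smile}\}^{\smile}=\half\{\{\pi,\pi\},f\}^{\smile}$ (citing \cite[Corollary 2.4.4]{vdBerghDoublePoisson}) as the one point requiring care in the non-commutative setting. Your reduction to generators via the derivation property, the verification on $a$ and $df$, and the derivation of the second identity from the first via the auxiliary $\delta$-compatibility are exactly the steps of the original argument.
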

\begin{proof}
This follows with exactly the same proof as \cite[Lemma \ref{poisson-keylemma}]{poisson}, once we observe that $\{-,-\}^{\smile}$ being a Loday bracket implies that $\{\pi, \{\pi,f\}^{\smile}\}^{\smile}=\half \{\{\pi,\pi\},f\}^{\smile}$ (cf. \cite[Corollary 2.4.4]{vdBerghDoublePoisson}).
\end{proof}

In particular, this implies that when $\pi$ is Poisson, $\mu(-,\pi)$ defines a map from non-commutative de Rham cohomology to non-commutative Poisson cohomology, and on the associated cyclic cohomology theories.

The proof of \cite[Lemma \ref{poisson-mulemma}]{poisson} now adapts to give the following. 
\begin{lemma}\label{mulemma}
There  are  maps
\[
 (\pr_2 + \tr \mu\eps) \co  \PreBiSp(A,n)/F^{p} \by \cD\cP(A,n)/F^{p} \to T\cD\cP(A,n)/F^{p}
\]
 over $\cD\cP(A,n)/F^{p}$ for all $p$, compatible with each other.
In particular, we have
\[
 (\pr_2 + \mu\eps)\co  \PreBiSp(A,n) \by \cD\cP(A,n) \to T\cD\cP(A,n).
\]
\end{lemma}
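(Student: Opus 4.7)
The plan is to define the map at the simplex level by sending a pair $(\omega, \pi)$ of $k$-simplices of $\PreBiSp(A,n)/F^p$ and $\cD\cP(A,n)/F^p$ to the element $\pi + \tr\mu(\omega, \pi)\eps$, where $\mu$ is extended $\Omega^{\bullet}(\Delta^k)$-linearly (making use of the commutative DGA structure on de Rham polynomial forms on $\Delta^k$). Compatibility with face and degeneracy maps of $\Delta^k$ is then immediate from naturality, and the projection to $\cD\cP(A,n)/F^p$ is $\pi$ by construction.

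The core point is to verify the Maurer--Cartan equation for $\pi + \tr\mu(\omega, \pi)\eps$ in the DGLA $(F^2\widehat{\Pol}^{nc}_{\cyc}(A,n)^{[-n-1]}/F^{p+2})\ten_\Q \Q[\eps]\ten_\Q \Omega^{\bullet}(\Delta^k)$. Since $\eps^2 = 0$, this decomposes into the MC equation for $\pi$ (which holds by hypothesis) together with the linearised condition
\[
(\delta + \{\pi, -\})\tr\mu(\omega, \pi) = 0.
\]
To verify the linearised condition, I would apply $\tr$ to the identity $\delta_\pi \mu(\omega, \pi) = \mu(D\omega, \pi) + \nu(\omega, \pi, \kappa(\pi))$ from Lemma \ref{keylemma}, using the relation $\tr\{\pi, \alpha\}^{\smile} = \{\pi, \tr\alpha\}$ from Proposition \ref{bracketprop} to conclude
\[
(\delta + \{\pi, -\})\tr\mu(\omega, \pi) = \tr\mu(D\omega, \pi) + \tr\nu(\omega, \pi, \kappa(\pi)).
\]
Both terms on the right vanish: the first because $\omega$ is a cocycle in the total cyclic de Rham complex, so $D\omega = 0$; the second because $\pi$ satisfies the MC equation, so $\kappa(\pi) = \delta\pi + \tfrac12\{\pi, \pi\} = 0$.

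Filtration preservation is automatic: since $\pi \in F^2$, the formula $\mu(df, \pi) = \{\pi, f\}^{\smile}$ together with the property $\{F^i, F^j\}^{\smile} \subset F^{i+j-1}$ gives $\mu(\Omega^1_{A/R}, \pi) \subset F^1\widehat{\Pol}^{nc}$, and multiplicativity of $\mu$ then yields $\mu(F^q\DR, \pi) \subset F^q\widehat{\Pol}^{nc}$ for all $q$. Compatibility of the maps as $p$ varies follows from naturality of $\mu$ with respect to the Hodge filtration. The main technical obstacle is bookkeeping: ensuring that the simplicial de Rham differential on $\Omega^{\bullet}(\Delta^k)$, the total differential $D$ on $\DR(A/R)$, and the twisted differential $\delta_\pi$ on $\widehat{\Pol}^{nc}$ all fit together with the correct Koszul signs, so that Lemma \ref{keylemma} (which is stated in the affine setting) extends verbatim after tensoring with $\Omega^{\bullet}(\Delta^k)$; this should be a routine but sign-sensitive extension.
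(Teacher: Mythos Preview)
Your proposal is correct and is precisely the argument the paper has in mind: the paper itself gives no proof beyond the sentence ``The proof of \cite[Lemma \ref{poisson-mulemma}]{poisson} now adapts to give the following'', and your sketch is exactly that adaptation, using Lemma~\ref{keylemma} together with the closedness of $\omega$ and the Maurer--Cartan condition on $\pi$ to verify the linearised MC equation for $\pi + \tr\mu(\omega,\pi)\eps$. The only quibble is an indexing slip: in the paper's conventions $\cD\cP(A,n)/F^{p}$ is $\mmc(F^2\widehat{\Pol}^{nc}_{\cyc}(A,n)^{[-n-1]}/F^{p})$, not $/F^{p+2}$, but this does not affect the argument.
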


\begin{example}\label{compatex1} 
When $\omega =a (dx)b(dy)c$, note that  $\mu(\omega, \pi)= \pm a\{\pi,x\}^{\smile}\,b\{\pi,y\}^{\smile}\,c$. If $\pi=\pi_2$, then this is an element of $\HHom_{(A^{e})^{\ten 2}}(((\Omega^1_{A/R})_{[-n-1]})^{\ten 2},[A^{\ten 3}])^{[n+2]}$.

The morphism
\[
(\tr  \mu(\omega, \pi))^{\flat} \co (\Omega^1_{A/R})_{[-n]}\to \DDer_R(A)
\] 
induced by the associated cyclic double bracket
$
\tr  \mu(\omega, \pi)
$
is then
given by 
\[
 (\tr \mu(\omega, \pi))^{\flat} = \pi^{\flat} \circ \omega^{\sharp} \circ \pi^{\flat}.
\]
\end{example}
\begin{proof}
The double bracket associated to $\mu(\omega,\pi)$ is given in Sweedler notation by 
\begin{align*}
 (\tr \mu(\omega,\pi))(f,g)=  \pm &\{\pi,y\}^{\smile}(g)''ca\{\pi,x\}^{\smile}(f)'\ten \{\pi,x\}^{\smile}(f)''b\{\pi,y\}^{\smile}(g)' \\&\pm\{\pi,x\}^{\smile}(g)''b\{\pi,y\}^{\smile}(f)'\ten \{\pi,y\}^{\smile}(f)''ca\{\pi,x\}^{\smile}(g)',
\end{align*}

so the associated map 
\[
(\tr \mu(\omega, \pi))^{\flat} \co \Omega^1_{A/R}\to \DDer_R(A,A^{e})^{[-n]} 
\]
is determined on generators $df$ by
\begin{align*}
 (\tr \mu(\omega, \pi))^{\flat}(df)=\pm &(\{\pi,y\}^{\smile})''ca\{\pi,x\}^{\smile}(f)'\ten \{\pi,x\}^{\smile}(f)''b(\{\pi,y\}^{\smile})'\\ &\pm(\{\pi,x\}^{\smile})''b\{\pi,y\}^{\smile}(f)'\ten \{\pi,y\}^{\smile}(f)''ca(\{\pi,x\}^{\smile})'.
\end{align*}

Now, since $\{\pi,a\}^{\smile}= \pi(-,a)$ for $a \in A$, the expression above becomes
\[
  (\tr \mu(\omega, \pi))^{\flat}(df)=
 \pm \pi(-,y)''ca\pi(f,x) b\pi(-,y)' \pm\pi(-,x)''b\pi(f,y)ca\pi(-,x)'. 
\]

Meanwhile, $\pi^{\flat}(df ) = \pi(f,-)$, so $(\omega^{\sharp}\circ \pi^{\flat})(df )= \iota_{ \pi(f,-) }(\omega)$. Now, 
\[
i_{ \pi(f,-)}( a (dx) b (dy) c)=\pm a\pi(f,x)b (dy) c \pm a(dx)b\pi(f,y)c,
\]
so 
\[
 \iota_{\pi(f,-)}(\omega)= \pm \pi(f,x)''b(dy)ca\pi(f,x)' \pm \pi(f,y)''ca(dx)b\pi(f,y)',
\]
 and then   
 \begin{align*}
 &(\pi^{\flat} \circ \omega^{\sharp} \circ \pi^{\flat})(df)\\
 &= \pi( (\pm \pi(f,x)''b(dy)ca\pi(f,x)' \pm \pi(f,y)''ca(dx)b\pi(f,y)')\ten-),
 \\
 &=\pm \pi(y,-)' ca\pi(f,x)'\ten  \pi(f,x)''b \pi(y,-)'' \pm  \pi(x,-)'b\pi(f,y)'\ten \pi(f,y)''ca\pi(x,-)'',
 \\
 &= \pm \pi(-,y)'' ca\pi(f,x)b \pi(-,y)' \pm  \pi(-,x)''b\pi(f,y)ca\pi(-,x)'\\
 &= (\tr \mu(\omega, \pi))^{\flat}(df).\qedhere
  \end{align*}
\end{proof}

\begin{definition}
We say that an $n$-shifted  pre-bisymplectic structure $\omega$ and a double $n$-Poisson structure $\pi$ are  compatible (or a compatible pair) if 
\[
 [\mu(\omega, \pi)] = [\sigma(\pi)] \in  \H^{n+2}(F^2\widehat{\Pol}^{nc}_{\cyc,\pi}(A,n)) =\pi_0T_{\pi}\cD\cP(A,n),
\]
where $\sigma$ is the canonical tangent vector of Definition \ref{sigmadef}. 
\end{definition}

\begin{example}\label{compatex2}
Following Example \ref{compatex1}, if $\omega_i=0$ for $i >2$ and $\pi_i=0$ for $i>2$, then $(\omega, \pi)$ are a compatible pair
if and only if the map
\[
 \pi^{\flat} \circ \omega^{\sharp} \circ \pi^{\flat} \co (\Omega^1_{A/R})_{[-n]}\to \DDer_R(A)
\]
is homotopic to $\pi^{\flat}$, because $\sigma(\pi)=\pi$ in this case. 

In particular, if $\pi$ is non-degenerate, this means that $\omega$ and $\pi$ determine each other up to homotopy. 
\end{example}

\begin{lemma}\label{compatnondeg}
If  $(\omega, \pi)$ is a compatible pair and $\pi$ is non-degenerate, then $\omega$ is bisymplectic.
\end{lemma}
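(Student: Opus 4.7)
The plan is to reduce everything to the first graded piece of the Hodge filtration. Since $\pi \in F^2$ and the Lie bracket on cyclic polyvectors satisfies $\{F^i,F^j\} \subset F^{i+j-1}$, the bracket $\{\pi,-\}$ sends $F^j$ into $F^{j+1}$, so the twisted differential $\delta + \{\pi,-\}$ on $\widehat{\Pol}^{nc}_{\cyc,\pi}(A,n)$ preserves the filtration and reduces to $\delta$ on the associated graded. In particular, the cohomological equality $[\mu(\omega,\pi)] = [\sigma(\pi)]$ in $\H^{n+2}F^2\widehat{\Pol}^{nc}_{\cyc,\pi}(A,n)$ projects to an equality of classes in $\H^{n+2}(\gr_F^2\widehat{\Pol}^{nc}_{\cyc}(A,n),\delta)$, i.e.\ at the level of cyclic $2$-brackets up to quasi-isomorphism.

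Next I would identify the two sides of this projected identity explicitly. On the one hand, $\sigma(\pi) = \sum_{i\ge 2}(i-1)\pi_i$, so its $\gr^2_F$-component is simply $\pi_2$. On the other hand, $\mu$ is multiplicative, so the $\gr^2_F$-component of $\mu(\omega,\pi)$ comes only from $\mu(\omega_2,\pi_2)$, and by the computation of Example \ref{compatex1} the associated map
\[
 (\tr\mu(\omega_2,\pi_2))^{\flat} \co (\Omega^1_{A/R})_{[-n]}\to \DDer_R(A,A^{e})
\]
is the composite $\pi_2^{\flat}\circ \omega_2^{\sharp}\circ \pi_2^{\flat}$. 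Passing from cyclic $2$-brackets to their flat incarnations (via inner multiplication, which is a quasi-isomorphism when $\Omega^1_A$ is perfect over $A^{e}$), the compatibility identity thus becomes
\[
 \pi_2^{\flat}\circ \omega_2^{\sharp}\circ \pi_2^{\flat} \simeq \pi_2^{\flat}
\]
as morphisms of $A^{e}$-modules in the derived category.

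Finally, the conclusion is an easy two-out-of-three argument. Non-degeneracy of $\pi$ means that $\pi_2^{\flat}$ is a quasi-isomorphism, so it is invertible in the derived category; choosing a quasi-inverse $q$ and composing the above relation on the left with $q$ gives $\omega_2^{\sharp}\circ \pi_2^{\flat} \simeq \id$, and composing on the right with $q$ then yields $\pi_2^{\flat}\circ \omega_2^{\sharp}\simeq \id$ as well. Hence $\omega_2^{\sharp}$ is a quasi-isomorphism, and $\omega$ is bisymplectic in the sense of Definition \ref{bisymplecticdef}.

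The main obstacle is really just the first paragraph: verifying that the twisted differential $\delta_{\pi}=\delta+\{\pi,-\}$ is filtration-preserving with graded pieces $\delta$, so that the cohomology class equation in the Poisson-twisted complex indeed descends to an equation between cocycles in $\gr_F^2$. Once that bookkeeping is in place, the algebraic identity from Example \ref{compatex1} does all the work.
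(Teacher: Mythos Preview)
Your argument is correct and follows the same approach as the paper. The paper's proof is a two-liner: it simply asserts that even without the vanishing conditions of Example~\ref{compatex1} one still has $\pi_2^\flat \circ \omega_2^\sharp \circ \pi_2^\flat \simeq \pi_2^\flat$, and then concludes that $\omega_2^\sharp$ is a homotopy inverse to $\pi_2^\flat$. Your first two paragraphs make explicit the filtration argument the paper leaves implicit (that $\delta_\pi$ induces $\delta$ on $\gr_F^2$, so the compatibility identity descends there), and your third paragraph is the same two-out-of-three argument.

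One minor remark: the parenthetical in your second paragraph about the passage to $(-)^\flat$ being a quasi-isomorphism via inner multiplication is slightly off. All you actually need here is that the assignment $\pi_2 \mapsto \pi_2^\flat$ is linear in $\pi_2$ and hence a chain map for $\delta$, so that a $\delta$-homotopy between cyclic $2$-brackets induces a homotopy between the corresponding maps $(\Omega^1_{A/R})_{[-n]} \to \DDer_R(A,A^{e})$. No perfectness hypothesis is needed for this step; perfectness of $\Omega^1_A$ enters only to ensure that the notion of bisymplectic is well-posed.
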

\begin{proof}
Even when the vanishing conditions of Example \ref{compatex1} are not satisfied, we still have
\[
 \pi^{\flat}_2 \circ \omega^{\sharp}_2 \circ \pi^{\flat}_2 \simeq \pi^{\flat}_2,
\]
so if $\pi^{\flat}_2$ is a quasi-isomorphism, then  $\omega^{\sharp}_2$ must be its homotopy inverse.
\end{proof}

\begin{definition}\label{vanishingdef}
Given a simplicial set $Z$, an abelian group object $A$ in simplicial sets over $Z$, and a section $s \co Z \to A$, define the homotopy vanishing locus of $s$ to be the homotopy limit of the diagram
\[
\xymatrix@1{ Z \ar@<0.5ex>[r]^-{s}  \ar@<-0.5ex>[r]_-{0} & A \ar[r] & Z}.
\]
\end{definition}

We can write this as a homotopy fibre product $Z \by_{(s,0), A \by^h_Z A}^hA$, for the diagonal map $A \to A \by^h_Z A$. When $A$ is a trivial bundle $A = Z \by V$, for $V$ a simplicial abelian group, note that the homotopy vanishing locus is just the homotopy fibre of $s \co Z \to V$ over $0$. 

\begin{definition}\label{compdef}
Define the space $\cD\Comp(A,n)$ of compatible $n$-shifted pairs to be the homotopy vanishing locus of
\[
(\tr \mu) - (\id,\sigma) \co \PreBiSp(A,n) \by \cD\cP(A,n) \to \pr_2^*T\cD\cP(A,n) =\PreBiSp(A,n) \by T\cD\cP(A,n).
\]

We define a cofiltration on this space by setting $ \cD\Comp(A,n)/F^{p}$ to be the homotopy
vanishing locus of
\[
 (\tr \mu) - (\id,\sigma) \co \PreBiSp(A,n)/F^{p} \by \cD\cP(A,n)/F^{p} \to \pr_2^*T\cD\cP(A,n)/F^{p}.
\]
\end{definition}

We can rewrite $\cD\Comp(A,n)$ as the homotopy limit of the diagram
\[
\xymatrix@C=8ex{  \PreBiSp(A,n) \by \cD\cP(A,n) \ar@<0.5ex>[r]^-{(\pr_2 + \mu\eps)}  \ar@<-0.5ex>[r]_-{ \pr_2 +\sigma\pr_2\eps} & T\cD\cP(A,n) \ar[r] & \cD\cP(A,n) }
\]
of simplicial sets. 

In particular, an element of this space is given by a pre-bisymplectic structure $\omega$, a double Poisson structure $\pi$, and a homotopy $h$ between $\mu(\omega,\pi)$ and $\sigma(\pi)$ in $T_{\pi}\cD\cP(A,n)$.
%

\begin{definition}
 Define $\cD\Comp(A,n)^{\nondeg} \subset \cD\Comp(A,n)$ to consist of compatible pairs with $\pi$ non-degenerate. This is a union of path-components, and by Lemma \ref{compatnondeg} has a natural projection 
\[
 \cD\Comp(A,n)^{\nondeg}\to \BiSp(A,n)
\]
as well as the canonical map
\[
 \cD\Comp(A,n)^{\nondeg} \to\cD\cP(A,n)^{\nondeg}.
\]
\end{definition}

\begin{proposition}\label{compatP1}
The canonical map
\begin{eqnarray*}
    \cD\Comp(A,n)^{\nondeg} \to  \cD\cP(A,n)^{\nondeg}           
\end{eqnarray*}
 is a weak equivalence.
\end{proposition}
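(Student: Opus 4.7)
The plan is to fix a non-degenerate $\pi \in \cD\cP(A,n)^{\nondeg}$ and show that the homotopy fibre of the projection $\cD\Comp(A,n)^{\nondeg} \to \cD\cP(A,n)^{\nondeg}$ over $\pi$ is contractible. Unwinding Definition \ref{compdef}, this fibre is weakly equivalent to the homotopy fibre of
\[
 \mu(-,\pi)\co \PreBiSp(A,n) \to T_\pi \cD\cP(A,n)
\]
over the point $\sigma(\pi)$. Since both spaces are inverse limits over $p$ of Dold--Kan denormalisations of good truncations of the shifted filtered cochain complexes $F^2\DR_{\cyc}(A/R)/F^p$ and $F^2\widehat{\Pol}^{nc}_{\cyc,\pi}(A,n)/F^p$ respectively, and $\mu(-,\pi)$ comes from a compatible system of maps between these complexes, the aim is to prove that
\[
 \mu(-,\pi)\co F^2\DR_{\cyc}(A/R) \to F^2\widehat{\Pol}^{nc}_{\cyc,\pi}(A,n)
\]
is a filtered quasi-isomorphism. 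Contractibility of the homotopy fibre over $\sigma(\pi)$ will then follow from the corresponding fact for chain complexes, combined with a Milnor exact sequence argument to pass to the inverse limit.

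That $\mu(-,\pi)$ is a chain map follows directly from the second identity in Lemma \ref{keylemma}: since $\pi$ is Poisson, $\kappa(\pi)=0$, giving $\delta_\pi\mu(\omega,\pi)=\mu(D\omega,\pi)$. One then argues by induction on $p$ that the filtered quotients $\mu(-,\pi)\co F^2\DR_{\cyc}/F^{p+1} \to F^2\widehat{\Pol}^{nc}_{\cyc,\pi}/F^{p+1}$ are quasi-isomorphisms. Via the long exact sequences associated to
\[
 0 \to F^{p+1}/F^{p+2} \to F^{p}/F^{p+2} \to F^{p}/F^{p+1} \to 0
\]
on both sides, this reduces to showing that $\mu(-,\pi)$ induces a quasi-isomorphism on each associated graded piece $\gr^p_F$. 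On these pieces both differentials collapse to the internal differential $\delta$, since $d(F^p)\subset F^{p+1}$ on the de Rham side, while on the polyvector side the twist by $\pi\in F^2$ satisfies $\{\pi,F^p\}^\smile\subset F^{p+1}$ by Proposition \ref{bracketprop}.

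The key computation will be that, up to the trace identification of Definition \ref{cyclictracedef}, the graded map $\gr^p_F\mu(-,\pi)$ is the $p$-fold iterated contraction with $\pi_2$. This is because $\mu(df,\pi)=\{\pi,f\}^\smile$ decomposes as $\{\pi_2,f\}^\smile$ plus higher-order contributions, and the terms $\{\pi_i,f\}^\smile$ for $i\geq 3$ lie in $F^{i-1}$, so when $p$ such expressions are multiplied together the $\pi_2$-only term dominates modulo $F^{p+1}$ (compare Example \ref{compatex1} for the $p=2$ case). Under the non-degeneracy hypothesis, $\pi_2^\flat\co(\Omega^1_{A/R})_{[-n]}\to\DDer_R(A,A^e)$ is a quasi-isomorphism of perfect $A^e$-modules, so its $p$-fold cyclic tensor power is also a quasi-isomorphism, completing the induction. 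The hard part will be making this associated graded identification precise: one must verify carefully that the cyclic structures on $\Omega^p_{A/R,\cyc}$ and on the corresponding cyclic polyvectors are compatibly intertwined by the contraction operation, which relies on the trace map of Definition \ref{cyclictracedef} being a filtered quasi-isomorphism (guaranteed here since non-degeneracy forces $\Omega^1_A$ to be perfect).
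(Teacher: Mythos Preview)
Your proposal is correct and follows essentially the same approach as the paper: identify the homotopy fibre over $\pi$ with the homotopy fibre of $\tr\mu(-,\pi)\co \PreBiSp(A,n)\to T_\pi\cD\cP(A,n)$ over $\sigma(\pi)$, then show this map is a filtered quasi-isomorphism by checking on associated gradeds using non-degeneracy of $\pi_2$. The paper's proof is terser --- it simply asserts the quasi-isomorphism on $\gr_F^p$ without the inductive bookkeeping or the Milnor-sequence justification for passing to the limit --- but the mathematical content is the same, and your care about the cyclic compatibility and the role of the trace map is well placed.
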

\begin{proof}
For any $\pi \in \cD\cP(A,n)$, the homotopy fibre of $\cD\Comp(A,n)^{\nondeg} $ over $\pi$ is just the homotopy fibre of
\[
\tr \mu(-,\pi)  \co \PreBiSp(A,n)  \to T_{\pi}\cD\cP(A,n) 
\]
over $\sigma(\pi)$.

The map $\tr \mu(-,\pi) \co \DR_{\cyc}(A/R) \to (\widehat{\Pol}^{nc}_{\cyc}(A,n), \delta_{\pi})$ is a morphism of complete filtered complexes by Lemma \ref{keylemma}, and non-degeneracy of $\pi_2$ implies that we have a quasi-isomorphism on the associated gradeds 
\begin{align*}
\gr_F^p \tr \mu(-,\pi)\co &(\Omega_A^p\ten_{A^{e}}A)/C_p \simeq  (\HHom_{(A^{e})^{\ten p}}((\Omega_A^1)^{\ten p}, \{A^{\ten p}\})^{[-pn]})^{C_p}\\
\gr_F^0 \tr \mu(-,\pi)\co & A/[A,A]= A/[A,A].
\end{align*}

We therefore have a quasi-isomorphism of filtered complexes, so we have isomorphisms on homotopy groups:
\begin{eqnarray*}
 \pi_j\PreBiSp(A,n)  &\to& \pi_jT_{\pi}\cD\cP(A,n)\\
 \H^{n+2-j}(F^2 \DR_{\cyc}(A/R)) &\to&  \H^{n+2-j}(F^2\widehat{\Pol}^{nc}_{\cyc}(A,n), \delta_{\pi}).\qedhere
\end{eqnarray*}
\end{proof}

\subsubsection{Towers of obstructions}\label{towersn}

We will now set about showing that the tower $\cD\Comp(A,n) \to \ldots \to \cD\Comp(A,n)/F^{i+2} \to \ldots \to \cD\Comp(A,n)/F^2 $ does not contain nearly as much information as first appears.

The long exact sequence of cohomology yields the following:
\begin{proposition}\label{DRobs}
For each $p$, there is a canonical long exact sequence
\begin{align*}
\ldots\to \H_{p+i-n-2}(\Omega^{p}_{A/R,\cyc}) \to &\pi_i(\PreBiSp/F^{p+1})\\
&\to  \pi_i(\PreBiSp/F^{p}) \to \H_{p+i-n-3}(\Omega^{p}_{A/R,\cyc}) 
\to\ldots 
\end{align*}
 of homotopy groups, where $\PreBiSp=\PreBiSp(A,n)$.
\end{proposition}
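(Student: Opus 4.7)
The plan is to exhibit a fibration sequence of simplicial sets relating $\PreBiSp/F^{p+1}$ and $\PreBiSp/F^p$, and read off the long exact sequence.  Recall from the remark after Definition \ref{PreSpdef} that $\PreBiSp(A,n)/F^p$ is naturally weakly equivalent to the Dold--Kan denormalisation of the good truncation $\tau^{\le 0}(F^2\DR_{\cyc}(A/R)^{[n+2]}/F^p)$; equivalently, using de Rham polynomial forms as in the definition, it is a simplicial abelian group whose homotopy groups are
\[
 \pi_i(\PreBiSp(A,n)/F^p) = \H^{n+2-i}(F^2\DR_{\cyc}(A/R)/F^p).
\]

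The key observation is that the Hodge filtration gives a short exact sequence of cochain complexes
\[
 0 \to \Omega^p_{A/R,\cyc} \to F^2\DR_{\cyc}(A/R)/F^{p+1} \to F^2\DR_{\cyc}(A/R)/F^p \to 0,
\]
where the leftmost term, being the graded piece $F^p/F^{p+1}$, carries only the internal chain differential $\delta$ (the de Rham differential $d$ strictly raises Hodge degree, so it vanishes on the associated graded).  Tensoring with the flat $\Q$-algebra $\Omega^{\bt}(\Delta^k)$ and taking $(n+2)$-cocycles gives a levelwise short exact sequence of simplicial abelian groups, and hence a fibration sequence whose fibre over $0$ is the corresponding  Maurer--Cartan (in this abelian case, simply cocycle) simplicial abelian group of $\Omega^p_{A/R,\cyc}[n+2]$.

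From the induced long exact sequence of homotopy groups (equivalently, the cohomology long exact sequence of the above short exact sequence shifted by $n+2$), we obtain
\[
 \ldots \to \H^{n+1-i}(\Omega^p_{A/R,\cyc}) \to \pi_i(\PreBiSp/F^{p+1}) \to \pi_i(\PreBiSp/F^p) \to \H^{n+2-i}(\Omega^p_{A/R,\cyc}) \to \ldots
\]
Finally, under the conversion between chain and cochain gradings, an element of $(\Omega^p_{A/R,\cyc})_j$ sits in total cochain degree $p-j$, so $\H^{n+2-i}(\Omega^p_{A/R,\cyc}) = \H_{p+i-n-2}(\Omega^p_{A/R,\cyc})$, and similarly for the neighbouring term; substituting these gives the displayed long exact sequence.

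There is essentially no obstacle here beyond careful bookkeeping of the two gradings; the only point meriting attention is the identification of the homotopy groups of $\PreBiSp/F^p$ with the cited cohomology groups, which is a standard Dold--Kan argument justified by the de Rham quasi-isomorphism $\Omega^{\bt}(\Delta^k) \simeq N^*(\Delta^k)_{\Q}$.
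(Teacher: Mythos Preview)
Your approach is exactly the paper's: the proposition is an immediate consequence of the long exact sequence in cohomology associated to the short exact sequence $0 \to \gr_F^p \to F^2\DR_{\cyc}/F^{p+1} \to F^2\DR_{\cyc}/F^p \to 0$, together with the identification $\pi_i(\PreBiSp/F^q) \cong \H^{n+2-i}(F^2\DR_{\cyc}/F^q)$.

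There is, however, an indexing slip in your intermediate display. With $\pi_i(K) = \H^{n+2-i}(\gr_F^p\DR_{\cyc})$ for the fibre $K$, the fibration long exact sequence reads
\[
 \ldots \to \H^{n+2-i}(\gr_F^p) \to \pi_i(\PreBiSp/F^{p+1}) \to \pi_i(\PreBiSp/F^p) \to \H^{n+3-i}(\gr_F^p) \to \ldots,
\]
not with $n+1-i$ and $n+2-i$ as you wrote. Your conversion formula $\H^{k}(\gr_F^p) = \H_{p-k}(\Omega^p_{A/R,\cyc})$ is correct, and applying it to the corrected indices $n+2-i$ and $n+3-i$ gives $\H_{p+i-n-2}$ and $\H_{p+i-n-3}$, matching the proposition. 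As written, substituting your conversion into your displayed sequence would yield $\H_{p+i-n-1}$ and $\H_{p+i-n-2}$, which is off by one from the statement.
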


Standard DGLA obstruction theory as in \cite[Corollary \ref{poisson-obsDGLAcor}]{poisson} also gives:
\begin{proposition}\label{poissonobs}
 For each $p \ge 2$, there is a canonical long exact sequence
 \begin{align*}
\ldots\to \H^{-i-n}\gr_F^p\widehat{\Pol}^{nc}_{\cyc}(A,n)\to &\pi_i(\cD\cP(A,n)/F^{p+1})\\
&\to  \pi_i(\cD\cP(A,n)/F^{p}) \to \H^{1-i-n}\gr_F^p\widehat{\Pol}^{nc}_{\cyc}(A,n)
\to\ldots 
\end{align*}
 of homotopy groups and sets, where $\pi_i$ indicates the homotopy group at any basepoint. 
\end{proposition}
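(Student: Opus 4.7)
The plan is to view this as a direct application of Maurer--Cartan obstruction theory to the short exact sequence of filtered DGLAs
\[
0 \to \gr_F^p\widehat{\Pol}^{nc}_{\cyc}(A,n)^{[-n-1]} \to F^2\widehat{\Pol}^{nc}_{\cyc}(A,n)^{[-n-1]}/F^{p+1} \to F^2\widehat{\Pol}^{nc}_{\cyc}(A,n)^{[-n-1]}/F^{p} \to 0.
\]
First I would verify the two structural facts that reduce the situation to the abelian/central setting: since $\{F^i,F^j\}\subset F^{i+j-1}$ by Proposition \ref{bracketprop}, we have $\{F^p,F^p\}\subset F^{2p-1}\subset F^{p+1}$ for $p\ge 2$, so the kernel $\gr_F^p\widehat{\Pol}^{nc}_{\cyc}(A,n)^{[-n-1]}$ is an abelian DGLA; similarly $\{F^2,F^p\}\subset F^{p+1}$ shows that the action of the quotient $F^2/F^p$ on this kernel is trivial, so the extension is a central square-zero extension of DGLAs.

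Next I would invoke the standard obstruction result (as in \cite[Corollary \ref{poisson-obsDGLAcor}]{poisson}): for such a central square-zero extension of nilpotent filtered DGLAs, the induced map $\mmc(F^2/F^{p+1})\to \mmc(F^2/F^p)$ is a fibration of simplicial sets whose homotopy fibre over any basepoint $\pi$ is $\mmc$ of the kernel, equipped with the untwisted differential (triviality of the action being precisely what kills the usual twist $\delta+\{\pi,-\}$). Because the kernel is abelian, its Maurer--Cartan space is canonically weakly equivalent to the Dold--Kan denormalisation of $\tau^{\le 0}\bigl(\gr_F^p\widehat{\Pol}^{nc}_{\cyc}(A,n)^{[-n-1]}\bigr)^{[1]}$, whose homotopy groups in degree $i$ are $\H^{-i}$ of the shifted complex, i.e.\ $\H^{-i-n}\gr_F^p\widehat{\Pol}^{nc}_{\cyc}(A,n)$.

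Finally, writing out the long exact sequence of homotopy groups for the fibration
\[
\mmc\bigl(\gr_F^p\widehat{\Pol}^{nc}_{\cyc}(A,n)^{[-n-1]}\bigr)\to \cD\cP(A,n)/F^{p+1}\to \cD\cP(A,n)/F^{p}
\]
yields the asserted sequence, with the connecting map landing in $\H^{1-i-n}\gr_F^p\widehat{\Pol}^{nc}_{\cyc}(A,n)$. There is no substantial obstacle here: the only delicate points are book-keeping of degree and grading shifts (the chain/cochain $u$-convention plus the shift $[-n-1]$ used in Definition \ref{poissdef}) and checking that the obstruction theory applies verbatim in the cyclic, double-bracket setting, both of which are routine once the bracket bound $\{F^i,F^j\}\subset F^{i+j-1}$ has been recorded.
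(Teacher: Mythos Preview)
Your proposal is correct and takes essentially the same approach as the paper, which simply cites standard DGLA obstruction theory from \cite[Corollary \ref{poisson-obsDGLAcor}]{poisson} without further elaboration. Your explicit verification that $\{F^2,F^p\}\subset F^{p+1}$ makes the extension central (so that the fibre carries the untwisted differential $\delta$ rather than $\delta+\{\pi,-\}$) is exactly the point that justifies the appearance of the bare cohomology $\H^{\ast}\gr_F^p\widehat{\Pol}^{nc}_{\cyc}(A,n)$ in the statement.
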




We now establish the corresponding statements for  the space $\cD\Comp(A,n)$ of compatible pairs.

\begin{definition}\label{Mdef}
Given a compatible pair  $(\omega, \pi) \in \cD\Comp(A,n)/F^3$ and $p \ge 0$, define the cochain complex 
$
 M(\omega,\pi,p) 
$
to be the cocone of the map 
\begin{align*}
 (\Omega^{p}_{A,\cyc})^{[n-p+1]} \oplus &(\HHom_{(A^{e})^{\ten p}}((\Omega_A^1)^{\ten p}, \{A^{\ten p}\})^{[-(p-1)(n+1)]})^{C_p}
\\
 &\to(\HHom_{(A^{e})^{\ten p}}((\Omega_A^1)^{\ten p}, \{A^{\ten p}\})^{[-(p-1)(n+1)]})^{C_p}
\end{align*}
given by combining                                
\[
 (\pi^{\flat})^{\ten p}\co (\Omega^{p}_{A,\cyc})\to (\HHom_{(A^{e})^{\ten p}}((\Omega_A^1)^{\ten p}, \{A^{\ten p}\})^{[-np]})^{C_p}
\]
with the map 
\[
\tr  \nu(\omega, \pi) - (p-1) \co(\HHom_{(A^{e})^{\ten p}}((\Omega_A^1)^{\ten p}, \{A^{\ten p}\})^{[-np]})^{C_p} \to (\HHom_{(A^{e})^{\ten p}}((\Omega_A^1)^{\ten p}, \{A^{\ten p}\})^{[-np]})^{C_p}
\]
where                              
\[
 \nu(\omega, \pi)(\phi):= \nu(\omega, \pi, \phi).
\]
\end{definition}
%

\begin{lemma}\label{nondegtangent}
 If $\pi$ is non-degenerate, then the projection
\[
 M(\omega,\pi,p) \to (\Omega^{p}_{A,\cyc})^{[n-p+1]}
\]
is a quasi-isomorphism.
\end{lemma}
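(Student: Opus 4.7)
The defining map of $M(\omega,\pi,p)$ as a cocone has source $X \oplus Y$ and target $Y$, where
\[
X := (\Omega^p_{A,\cyc})^{[n-p+1]},\qquad Y := (\HHom_{(A^{e})^{\ten p}}((\Omega_A^1)^{\ten p}, \{A^{\ten p}\})^{[-(p-1)(n+1)]})^{C_p}.
\]
Writing this map as $f + g$ with $f := (\pi^\flat)^{\ten p}\co X \to Y$ and $g := \tr\nu(\omega,\pi)-(p-1)\cdot\id$ an endomorphism of $Y$, there is a short exact sequence
\[
0 \to \cocone(g) \to M(\omega,\pi,p) \to X \to 0
\]
of cochain complexes, because the kernel of the projection $M \to X$ is precisely $\cocone(g \co Y \to Y)$. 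The projection is therefore a quasi-isomorphism if and only if $g$ is one, so it suffices to show that $g$ is a quasi-isomorphism whenever $\pi$ is non-degenerate (with $(\omega,\pi)$ a compatible pair, which is part of the setup for $M$).

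My first step would be a grading analysis to identify which components of $\omega = \sum_i \omega_i$ and $\pi = \sum_k \pi_k$ contribute to $\tr\nu(\omega,\pi)|_Y$. Using the filtration inequality $\{F^i,F^j\}^\smile \subset F^{i+j-1}$ of Proposition \ref{bracketprop}, a typical summand of $\nu(\omega_i,\pi,b)$ for $b \in Y$ has arity $\sum_{j\ne i_0}(k_j-1)+(p-1)$, with the sum running over $i-1$ indices each at least $2$. Equality to $p$ forces $i = 2$ and $k_1 = 2$, so $\tr\nu(\omega,\pi)|_Y = \tr\nu(\omega_2,\pi_2)|_Y$ is a purely bilinear pairing in $\omega_2$ and $\pi_2$.

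The technical heart is then the identity $\tr\nu(\omega_2,\pi_2,-)\simeq p\cdot\id_Y$ at a compatible non-degenerate pair, from which $g \simeq p - (p-1) = \id_Y$ is automatically a quasi-isomorphism. For $p = 2$ this follows cleanly by differentiating the identity $(\tr\mu(\omega,\pi))^\flat = \pi^\flat\circ\omega^\sharp\circ\pi^\flat$ of Example \ref{compatex1} in the $\pi$-direction; the Leibniz rule yields
\[
(\tr\nu(\omega,\pi,b))^\flat = b^\flat\circ\omega^\sharp\circ\pi^\flat + \pi^\flat\circ\omega^\sharp\circ b^\flat,
\]
and the compatibility of the non-degenerate pair supplies homotopies $\omega^\sharp\circ\pi^\flat \simeq \id \simeq \pi^\flat\circ\omega^\sharp$ (via Lemma \ref{compatnondeg} together with Example \ref{compatex2}), so the right-hand side collapses to $2\,b^\flat$.

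The main obstacle will be extending this calculation from $p = 2$ to arbitrary cyclic $p$-brackets for $p \ge 3$. I would perform the analogous multi-slot $\flat$-computation: the expression $\nu(\omega_2,\pi_2,b) = a_0\{b,a_1\}^\smile\{\pi_2,a_2\}^\smile + a_0\{\pi_2,a_1\}^\smile\{b,a_2\}^\smile$ replaces one $\pi_2$-contraction in a presentation of $b$ by $\omega^\sharp\circ\pi^\flat \simeq \id$ in each of its two terms, and on passing to the $C_p$-invariants defining $Y$, the cyclic permutation acting on the single $b$-contraction slot redistributes these two contributions across all $p$ cyclic positions of $b$, producing a total $p$-fold identity. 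Once this identity is in hand, the short exact sequence from the first paragraph closes out the lemma.
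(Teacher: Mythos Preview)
Your reduction via the short exact sequence $0 \to \cocone(g) \to M(\omega,\pi,p) \to X \to 0$ is correct and is exactly how the paper proceeds implicitly: the whole content is to show that $g = \tr\nu(\omega,\pi) - (p-1)$ is a quasi-isomorphism, i.e.\ that $\tr\nu(\omega,\pi)$ is homotopic to multiplication by $p$ on $Y$. (Your grading analysis is harmless but redundant: by Definition~\ref{Mdef} the pair $(\omega,\pi)$ already lives in $\cD\Comp(A,n)/F^3$, so $\omega=\omega_2$ and $\pi=\pi_2$ from the outset.) Your $p=2$ argument, differentiating the identity of Example~\ref{compatex1} in the $\pi$-direction to obtain $(\tr\nu(\omega,\pi,b))^\flat = b^\flat\circ\omega^\sharp\circ\pi^\flat + \pi^\flat\circ\omega^\sharp\circ b^\flat \simeq 2\,b^\flat$, is a clean alternative to the paper's explicit Sweedler computation.

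The genuine gap is your extension to $p\ge 3$. The expression $\nu(\omega_2,\pi_2,b)$ has exactly two terms, coming from the two $d$-slots of $\omega_2$; it is not indexed by the $p$ slots of $b$, and cyclic symmetry of $b$ alone does not convert those two terms into $p$ copies of the identity. Your phrase ``the cyclic permutation \ldots\ redistributes these two contributions across all $p$ cyclic positions of $b$'' conflates the two-term Leibniz expansion in $\omega_2$ with a $p$-term expansion in $b$ that has not yet been produced. What the paper does to bridge this is the missing idea: it shows that $\nu(\omega,\pi)\circ\tr$ is cyclically equivalent to an operator $\tilde\nu(\omega,\pi)$ on the \emph{non-cyclic} algebra $\widehat{\Pol}^{nc}(A,n)$ which is a \emph{derivation} for the associative product. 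Once you know $\tilde\nu$ is a derivation, it is determined by its effect on generators $\theta \in \HHom_{A^e}((\Omega^1_A)_{[-n-1]},\{A^{\ten 2}\})$, where the explicit computation gives $\tilde\nu(\omega,\pi)(\theta) = \tau\circ\pi^\flat\circ\omega^\sharp\circ\tau(\theta) \simeq \theta$; the Leibniz rule then automatically produces $\tilde\nu(\omega,\pi)(\theta_1\cdots\theta_p) \simeq p\cdot\theta_1\cdots\theta_p$, and descending to cyclic coinvariants yields $\tr\nu(\omega,\pi)\simeq p$ on $Y$. Your differentiation trick is essentially this derivation property specialised to $p=2$; to make your argument work for general $p$ you need to supply the lift to a derivation on $\widehat{\Pol}^{nc}$ before taking $\tr$.
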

\begin{proof} By linearity, it suffices to consider the case $\omega =a(dx)b(dy)c$. 
Then for any $p$-bracket $\phi$ and any $x \in A$ 
we 
have
\[
 \nu(\omega,\pi)(\phi)= \pm a \{\phi,x\}^{\smile}b\{\pi,y\}^{\smile}c \pm a \{\pi,x\}^{\smile}b\{\phi,y\}^{\smile}c. 
\]
For the double bracket $\{\{-,-\}\}$ of Proposition \ref{bracketprop}, the trace $\tr$ of Definition \ref{cyclictracedef}, and any $\theta \in \widehat{\Pol}^{nc}(A,n)$, the element $ \nu(\omega,\pi)(\tr \theta) $ of  $\widehat{\Pol}^{nc}(A,n)$ is then cyclically equivalent to
\[
\tilde{\nu}(\omega, \pi)(\theta):= \pm \{\{x,\theta\}\}'b\{\pi,y\}^{\smile}ca\{\{x,\theta\}\}''  \pm \{\{y,\theta\}\}'ca\{\pi,x\}^{\smile}b\{\{y,\theta\}\}'', 
\]
and this expression is obviously a derivation in $\theta$, in the sense that the map $\tilde{\nu}(\omega, \pi)$ is a derivation on $ \widehat{\Pol}^{nc}(A,n)$ with respect to the associative product.

In particular,  the derivation $\tilde{\nu}(\omega, \pi)$ on $\widehat{\Pol}^{nc}(A,n)$ is given on generators $\theta\in 
\HHom_{A^{e}}((\Omega^1_{A})_{[-n-1]},\{A^{\ten 2}\})$ by 
\[
 \theta \mapsto \pm \theta(x)' b\pi(-,y)ca\theta(x)'' \pm  \theta(y)' ca \pi(-,x)b\theta(x)''.
\]

Meanwhile, if $\tau$ denotes the isomorphism $A^{e} \cong \{A^{\ten 2}\}$ given by transposing factors, we have
\[
 \omega^{\sharp}(\tau\theta)= \iota_{\tau\theta}(\omega)= \pm \theta(x)'b(dy)ca\theta(x)'' \pm \theta(y)'ca(dx)b\theta(y)'',
\]
so 
\begin{align*}
 \tau\pi^{\flat}(\omega^{\sharp}(\tau\theta))&= 
 \pm \theta(x)'b\pi^{\flat}(dy)ca\theta(x)'' \pm \theta(y)'ca\pi^{\flat}(dx)b\theta(y)''\\
&= \pm \theta(x)'b \pi(-,y)ca\theta(x)'' \pm \theta(y)'ca\pi(-,x)b\theta(y)''. 
\end{align*}

Thus  $\nu(\omega, \pi)\circ \tr$ is  cyclically equivalent to the derivation
\[
\tilde{\nu}(\omega, \pi) \co  \widehat{\Pol}^{nc}(A,n) \to \widehat{\Pol}^{nc}(A,n)
\]
given on generators by $\tau \circ \pi^{\flat} \circ \omega^{\sharp}\circ \tau$.
If $\pi$ is non-degenerate, then (by compatibility) $\pi^{\flat} \circ \omega^{\sharp}$ is homotopic to the identity map, and thus $ \gr_F^p\nu(\omega, \pi)_{\cyc}$ is homotopic to $p$. In particular, $\gr_F^p\nu(\omega, \pi)_{\cyc} - (p-1) $ is a quasi-isomorphism in this case, so the projection
$
 M(\omega,\pi,p) \to (\Omega^{p}_{A/R,\cyc})^{[n-p+1]}
$
is a quasi-isomorphism.
\end{proof}


The proof of \cite[Proposition \ref{poisson-compatobs}]{poisson} adapts verbatim to our non-commutative context, giving:
\begin{proposition}\label{compatobs}
For each $p \ge 3$, there is a canonical long exact sequence
\[
 \xymatrix@C=1ex{ 
\vdots  \ar[d]^-{e_*} && \vdots  \ar[d]^-{e_*} &&\\ \pi_i(\cD\Comp(A,n)/F^{p})  \ar[d]^-{o_e}&&\pi_1 (\cD\Comp(A,n)/F^{p}) \ar[d]^-{o_e}&& \pi_0(\cD\Comp(A,n)/F^{p}) \ar[d]^-{o_e} \\ \H^{2-i}M(\omega_2,\pi_2,p) \ar[d]&& \H^1 M(\omega_2,\pi_2,p) \ar[d] &&  \H^2 M(\omega_2,\pi_2,p) \\ \pi_{i-1}(\cD\Comp(A,n)/F^{p+1}) \ar[d]^-{e_*}&&\pi_0(\cD\Comp(A,n)/F^{p+1})  
\ar `d[dr]   `r[r]^{e_*}   `[uuur] `r[uuurr] [uurr] &&
\\ \vdots \ar@{-->} `r[ur]   `u[uuuurr]   [uuuurr] &&&&
}
\]
 of homotopy groups and sets, where $\pi_i$ indicates the homotopy group at basepoint $(\omega, \pi)$, and the target of the final map is understood to mean
\[                                                                                                                             
   {o_e}(\omega, \pi) \in \H^2 M(\omega_2,\pi_2,p).                                                                                                                                             \]
\end{proposition}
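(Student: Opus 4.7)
My plan is to realise the long exact sequence as the one associated to a natural fibration
\[
\cD\Comp(A,n)/F^{p+1} \to \cD\Comp(A,n)/F^{p},
\]
whose homotopy fibre over a chosen basepoint $(\omega,\pi)$ I will identify with the Dold--Kan denormalisation of (a shift of) the complex $M(\omega_2,\pi_2,p)$. To set this up, I would first unwind Definition \ref{compdef}, writing $\cD\Comp(A,n)/F^{p}$ as the homotopy limit of the diagram
\[
\xymatrix@C=5ex{ \PreBiSp(A,n)/F^{p}\by \cD\cP(A,n)/F^{p} \ar@<0.5ex>[r]^-{\pr_2+\tr\mu\eps}  \ar@<-0.5ex>[r]_-{\pr_2+\sigma\pr_2\eps} & T\cD\cP(A,n)/F^{p}. }
\]
Since homotopy limits commute with homotopy fibres, the fibre of the projection $\cD\Comp/F^{p+1}\to\cD\Comp/F^{p}$ is the homotopy equaliser of the induced maps on the $F^{p}$-graded fibres of $\PreBiSp$, $\cD\cP$ and $T\cD\cP$, which by Propositions \ref{DRobs} and \ref{poissonobs} are controlled respectively by $(\Omega^{p}_{A/R,\cyc})^{[n-p+1]}$ and $\gr^p_F \widehat{\Pol}^{nc}_{\cyc}(A,n)^{[-n-1]}$.

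Next I would compute the linearisations of the two parallel maps on these associated graded pieces. Lemma \ref{keylemma} together with the fact that $\mu(-,\pi)$ is a ring homomorphism and $\nu(-,\pi,-)$ a derivation shows that a first-order perturbation by $(\alpha,\beta)$ with $\alpha\in\Omega^{p}_{A/R,\cyc}$ and $\beta\in\gr^p_F\widehat{\Pol}^{nc}_{\cyc}$ alters $\tr\mu(\omega,\pi)$ to leading order by $\tr\mu(\alpha,\pi_2)+\tr\nu(\omega_2,\pi_2,\beta)$, while by Definition \ref{sigmadef} it alters $\sigma(\pi)$ by $(p-1)\beta$. Example \ref{compatex1} identifies $\tr\mu(\alpha,\pi_2)$ as the map $(\pi_2^{\flat})^{\ten p}$ of Definition \ref{Mdef}. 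Hence the difference of the two maps on $F^p$-graded fibres is precisely
\[
((\pi_2^{\flat})^{\ten p},\,\tr\nu(\omega_2,\pi_2,-)-(p-1))\co (\Omega^{p}_{A,\cyc})^{[n-p+1]}\oplus \gr^p_F\widehat{\Pol}^{nc}_{\cyc}[\ast]\to \gr^p_F\widehat{\Pol}^{nc}_{\cyc}[\ast],
\]
whose cocone is, by construction, $M(\omega_2,\pi_2,p)$ (up to the internal shifts tracked in Definition \ref{Mdef}). The long exact sequence of homotopy groups of the resulting fibration then yields the stated six-term pattern, with $\H^{2-i}M(\omega_2,\pi_2,p)$ appearing as the obstruction/transition group.

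The main obstacle will be verifying that the basepoint-dependent obstruction $o_e(\omega,\pi)\in \H^2 M(\omega_2,\pi_2,p)$ at the $\pi_0$ end really agrees with the primary obstruction coming from the Maurer--Cartan formalism: one must check that the quadratic tail of the compatibility equation contributes only to the already-computed linear obstruction modulo $F^{p+1}$, which follows from the filtration property $F^i\cdot F^j\subset F^{i+j}$ and $\{F^i,F^j\}\subset F^{i+j-1}$ established in Proposition \ref{bracketprop}. Once this bookkeeping is done, the argument runs verbatim parallel to \cite[Proposition \ref{poisson-compatobs}]{poisson} in the commutative case.
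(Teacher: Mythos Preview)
Your proposal is correct and follows essentially the same approach as the paper, which simply states that the proof of \cite[Proposition \ref{poisson-compatobs}]{poisson} adapts verbatim to the non-commutative context. You have correctly unwound what that adaptation entails: realising $\cD\Comp(A,n)/F^{p+1}\to\cD\Comp(A,n)/F^{p}$ as a tower whose relative obstruction theory is governed by the linearisation of $(\tr\mu-\sigma)$ on the associated graded, and identifying that linearisation via Lemma \ref{keylemma}, Definition \ref{sigmadef} and Example \ref{compatex1} with the map whose cocone is $M(\omega_2,\pi_2,p)$.
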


\subsubsection{The equivalence}\label{equivsn}

Substituting Lemma \ref{nondegtangent} and Proposition \ref{compatP1} for \cite[Lemma \ref{poisson-nondegtangent} and Proposition \ref{poisson-compatP1}]{poisson}, the proofs of \cite[Corollary \ref{poisson-compatcor1}, Proposition \ref{poisson-level0prop} and Corollary \ref{compatcor2}]{poisson} adapt to give:

\begin{proposition}\label{compatcor1}
The canonical map
\begin{eqnarray*}
  \cD\Comp(A,n)^{\nondeg} \to (\cD\Comp(A,n)/F^3)^{\nondeg} \by^h_{(\PreBiSp(A,n)/F^3)} \PreBiSp(A,n)  
\end{eqnarray*}
 is a weak equivalence.
\end{proposition}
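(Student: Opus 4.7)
The plan is to reduce to a levelwise statement via the cofiltration and then exploit the obstruction long exact sequences. Specifically, I would factor the canonical map as the homotopy limit over $p \ge 3$ of the maps
\[
(\cD\Comp(A,n)/F^{p+1})^{\nondeg} \to (\cD\Comp(A,n)/F^{p})^{\nondeg} \by^h_{\PreBiSp(A,n)/F^{p}} \PreBiSp(A,n)/F^{p+1},
\]
and show by induction on $p$ that each of these maps is a weak equivalence; since the towers $\{\cD\Comp(A,n)/F^{p}\}$ and $\{\PreBiSp(A,n)/F^p\}$ are each the homotopy inverse limits of their truncations, this will yield the proposition.

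For the inductive step, I would fix a basepoint $(\omega,\pi)$ of the target and compare the homotopy fibres of the two horizontal maps in the square
\[
\xymatrix{
\cD\Comp(A,n)/F^{p+1} \ar[r] \ar[d] & \cD\Comp(A,n)/F^{p} \ar[d] \\
\PreBiSp(A,n)/F^{p+1} \ar[r] & \PreBiSp(A,n)/F^{p}.
}
\]
The fibre of the top row is controlled by the long exact sequence of Proposition \ref{compatobs}, with obstructions lying in $\H^*M(\omega_2,\pi_2,p)$; the fibre of the bottom row is controlled by the long exact sequence of Proposition \ref{DRobs}, with obstructions in $\H_*(\Omega^p_{A,\cyc})$. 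The canonical projection from $M(\omega_2,\pi_2,p)$ to $(\Omega^{p}_{A/R,\cyc})^{[n-p+1]}$ of Definition \ref{Mdef} induces a morphism between these long exact sequences, and the forgetful map from compatible pairs to pre-bisymplectic structures realises this morphism geometrically.

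By Lemma \ref{nondegtangent}, under the non-degeneracy hypothesis on $\pi_2$ this projection is a quasi-isomorphism. A five-lemma argument applied to the induced morphism of long exact sequences then shows that the top horizontal map's fibre maps isomorphically onto the bottom horizontal map's fibre at every basepoint, hence the induced map on homotopy pullbacks is a weak equivalence. The base case $p=3$ is tautological, since that is where we start the induction.

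The main obstacle is verifying compatibility between the obstruction maps: the abstract five-lemma only works once we know that the projection $M(\omega_2,\pi_2,p) \to (\Omega^{p}_{A,\cyc})^{[n-p+1]}$ intertwines the obstruction-to-lift of a compatible pair with the obstruction-to-lift of its underlying pre-bisymplectic structure. This is essentially the non-commutative analogue of the corresponding step in \cite[Corollary \ref{poisson-compatcor1}]{poisson}, and boils down to chasing through the definition of $M(\omega_2,\pi_2,p)$ together with Lemma \ref{keylemma}, which tells us that $\tr\mu(-,\pi)$ is compatible with the de Rham differential up to terms controlled by $\tr\nu$ and $\kappa(\pi)$. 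The required compatibility then follows from the fact that the fibre of the projection $M(\omega_2,\pi_2,p) \to (\Omega^{p}_{A,\cyc})^{[n-p+1]}$ is precisely the cone of the map $\tr\nu(\omega,\pi)-(p-1)$ appearing in Definition \ref{Mdef}, which controls the difference between $\tr\mu(\omega,\pi)$ and $\sigma(\pi)$ in $\widehat{\Pol}^{nc}_{\cyc,\pi}(A,n)$ at the relevant Hodge weight.
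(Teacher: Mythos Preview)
Your proposal is correct and follows essentially the same approach as the paper. The paper's own proof consists of a single sentence deferring to \cite[Corollary \ref{poisson-compatcor1}]{poisson}, noting that the argument there adapts verbatim once Lemma~\ref{nondegtangent} is substituted for its commutative analogue; you have faithfully reconstructed that inductive obstruction-theoretic argument, comparing the towers for $\cD\Comp$ and $\PreBiSp$ via the quasi-isomorphism $M(\omega_2,\pi_2,p)\to(\Omega^p_{A,\cyc})^{[n-p+1]}$ and applying the five lemma.
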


\begin{proposition}\label{level0prop}
The canonical map
 \begin{eqnarray*}
  \cD\Comp(A,n)^{\nondeg}/F^3 &\to& \BiSp(A,n)/F^3    
\end{eqnarray*}
 is a weak equivalence.
\end{proposition}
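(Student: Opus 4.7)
The plan is to exhibit the projection as a Kan fibration with contractible homotopy fibres, paralleling the commutative case of \cite[Proposition \ref{poisson-level0prop}]{poisson} but with cyclic non-commutative polyvectors replacing commutative ones.

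First I would unpack the $F^3$ truncation. Proposition \ref{bracketprop} gives $\{F^2,F^2\} \subset F^3$, so the induced Lie bracket on the DGLA $F^2 \widehat{\Pol}^{nc}_{\cyc}(A,n)^{[n+1]}/F^3$ vanishes, the Maurer--Cartan equation collapses to the cocycle condition, and $\cD\cP(A,n)/F^3$ identifies with the Dold--Kan denormalisation of $\tau^{\le 0}\gr_F^2\widehat{\Pol}^{nc}_{\cyc}(A,n)^{[n+1]}$, parametrising cyclic $2$-brackets $\pi_2$. Likewise $\PreBiSp(A,n)/F^3$ is the denormalisation of $\tau^{\le 0}\Omega^2_{A/R,\cyc}[n]$, and $T\cD\cP(A,n)/F^3 \to \cD\cP(A,n)/F^3$ is a trivial bundle of simplicial abelian groups with fibre this same denormalisation, so compatibility data modulo $F^3$ is given by paths in a chain complex.

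Next I would describe the homotopy fibre of the projection $\cD\Comp(A,n)^{\nondeg}/F^3 \to \BiSp(A,n)/F^3$ over a fixed $\omega$. A point of the fibre is a non-degenerate cyclic $2$-bracket $\pi_2$ together with a compatibility homotopy; by Example \ref{compatex1} the element $\tr\mu(\omega_2,\pi_2) \in \gr_F^2\widehat{\Pol}^{nc}_{\cyc}(A,n)$ is the cyclic $2$-bracket associated to $\pi_2^{\flat}\circ \omega_2^{\sharp}\circ \pi_2^{\flat}$, and by Example \ref{compatex2} the compatibility datum is precisely a homotopy between this and $\sigma(\pi_2)^{\flat}=\pi_2^{\flat}$ in $\DDer_R(A,A^{e})$. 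Since $\pi_2^{\flat}$ is a quasi-isomorphism by non-degeneracy, pre-composing with a chain-level homotopy inverse rewrites the fibre as the space of pairs $(\pi_2^{\flat},h)$ consisting of a homotopy right-inverse of the quasi-isomorphism $\omega_2^{\sharp}$ together with a witness homotopy $\omega_2^{\sharp}\circ\pi_2^{\flat}\simeq \id$, which is the standard contractible space of quasi-inverses to a fixed quasi-isomorphism.

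To make this rigorous I would go via the long exact sequences of Propositions \ref{DRobs}, \ref{poissonobs} and \ref{compatobs} at $p=2$, using Lemma \ref{nondegtangent} to identify $M(\omega_2,\pi_2,2) \simeq (\Omega^2_{A/R,\cyc})^{[n-1]}$; unwinding the sequences shows that the map
\[
\pi_i(\cD\Comp(A,n)^{\nondeg}/F^3) \to \pi_i(\BiSp(A,n)/F^3)
\]
is exactly the isomorphism induced by the quasi-isomorphism $\omega_2^{\sharp}$, with Lemma \ref{compatnondeg} ensuring that the image lands in the bisymplectic components. The main obstacle is the concrete identification of $\tr\mu$ on the associated graded with $\omega_2^{\sharp}$, but this is precisely what Example \ref{compatex1} provides in the cyclic non-commutative setting; once this calculation is in place, the argument of \cite[Proposition \ref{poisson-level0prop}]{poisson} transplants with essentially no change.
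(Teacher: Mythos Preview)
Your approach is essentially the paper's own: the paper simply states that the proof of \cite[Proposition \ref{poisson-level0prop}]{poisson} adapts once Lemma \ref{nondegtangent} and Proposition \ref{compatP1} are substituted for their commutative analogues, and you are sketching exactly that adaptation, with Example \ref{compatex1} supplying the key cyclic computation.

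Two small corrections. First, Proposition \ref{compatobs} is stated only for $p \ge 3$, so you cannot invoke it ``at $p=2$''; the base case $/F^3$ must be handled directly (as you in fact do in your second paragraph), and the obstruction tower only kicks in from $p=3$ upwards for Proposition \ref{compatcor1}. Second, the step ``pre-composing with a chain-level homotopy inverse rewrites the fibre as the space of homotopy right-inverses of $\omega_2^{\sharp}$'' is not quite rigorous as stated, since $\pi_2^{\flat}$ is itself varying over the fibre and you cannot fix its inverse uniformly; the clean way to handle this (as in \cite{poisson}) is to observe that the quadratic map $\pi_2 \mapsto \tr\mu(\omega_2,\pi_2) - \sigma(\pi_2)$ has derivative at any non-degenerate $\pi_2$ given by $\tr\nu(\omega_2,\pi_2,-) - \id$, which Lemma \ref{nondegtangent} shows is a quasi-isomorphism, so an implicit-function/obstruction argument gives contractibility of the fibre. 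Your heuristic is the right intuition for why this works, but the formal argument goes through the derivative rather than through cancellation by a quasi-inverse.
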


\begin{corollary}\label{compatcor2}
The canonical maps
\begin{eqnarray*}
  \cD\Comp(A,n)^{\nondeg} &\to& \BiSp(A,n)  \\   
    \cD\Comp(A,n)^{\nondeg} &\to&  \cD\cP(A,n)^{\nondeg}           
\end{eqnarray*}
 are weak equivalences.
\end{corollary}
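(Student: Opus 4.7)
The plan is to obtain both equivalences by combining results already established. The second map is precisely the content of Proposition \ref{compatP1}, so I will concentrate on the first map $\cD\Comp(A,n)^{\nondeg} \to \BiSp(A,n)$, deducing it formally from Propositions \ref{compatcor1} and \ref{level0prop} together with the observation that non-degeneracy of a pre-bisymplectic structure depends only on its $\omega_2$ component.

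First I would invoke Proposition \ref{compatcor1} to rewrite
\[
 \cD\Comp(A,n)^{\nondeg} \simeq (\cD\Comp(A,n)/F^3)^{\nondeg} \by^h_{(\PreBiSp(A,n)/F^3)} \PreBiSp(A,n).
\]
I would then substitute the weak equivalence $(\cD\Comp(A,n)/F^3)^{\nondeg} \simeq \BiSp(A,n)/F^3$ supplied by Proposition \ref{level0prop}, to obtain
\[
 \cD\Comp(A,n)^{\nondeg} \simeq \BiSp(A,n)/F^3 \by^h_{(\PreBiSp(A,n)/F^3)} \PreBiSp(A,n).
\]
Since the inclusions $\BiSp(A,n) \hookrightarrow \PreBiSp(A,n)$ and $\BiSp(A,n)/F^3 \hookrightarrow \PreBiSp(A,n)/F^3$ are unions of path components, and since by Definition \ref{bisymplecticdef} non-degeneracy of $\omega$ is a condition only on the $\omega_2$ component, a point of $\PreBiSp(A,n)$ lies in $\BiSp(A,n)$ if and only if its image in $\PreBiSp(A,n)/F^3$ lies in $\BiSp(A,n)/F^3$. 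Consequently the homotopy fibre product on the right collapses to $\BiSp(A,n)$, finishing the argument.

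No serious obstacles remain at this stage: the genuinely difficult work has already been carried out, namely the tower-of-obstructions analysis underlying Proposition \ref{compatcor1} (which uses Lemma \ref{nondegtangent} to show that, when $\pi_2$ is non-degenerate, the complexes $M(\omega_2,\pi_2,p)$ controlling compatibility obstructions for $p \ge 3$ reduce to the same cyclic de Rham pieces $\Omega^p_{A,\cyc}$ that control $\PreBiSp$ via Proposition \ref{DRobs}), together with the base-case comparison at $F^3$ in Proposition \ref{level0prop}. The corollary itself is then a purely formal consequence obtained by pasting homotopy fibre products.
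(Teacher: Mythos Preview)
Your proof is correct and follows essentially the same route as the paper: the paper simply states that the proofs of the corresponding results in \cite{poisson} adapt, and what you have written is precisely the standard deduction of Corollary \ref{compatcor2} from Propositions \ref{compatcor1} and \ref{level0prop} (together with Proposition \ref{compatP1} for the second map).
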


\begin{remark}
 Note that when $n=0$ and $A$ is concentrated in degree $0$, this recovers the results of \cite[Appendix A]{vdBerghDoublePoisson}, which associate double Poisson structures to bisymplectic structures.
\end{remark}

\subsection{Shifted double Poisson structures on derived NC prestacks}\label{ArtinPoisssn}

The functoriality of shifted double Poisson structures is much more limited than that of shifted pre-bisymplectic structures. In order to extend the definitions to derived Artin NC prestacks (and more generally to  homogeneous derived NC prestacks with cotangent complexes), we have to define structures on the stacky DGAAs introduced in \S \ref{spcotsn}.

\subsubsection{Shifted double Poisson structures on stacky DGAAs} \label{stackyPoisssn}

For the purposes of this section, we now fix a stacky DGAA $A \in DG^+dg\Alg(R)$ (see \S \ref{spcotsn}) which is  cofibrant in the model structure of \cite[Lemma \ref{NCstacks-bidgaamodel}]{NCstacks}, though this can be relaxed  along similar lines to  Remark \ref{weakerassumptionrmk}. 

All the definitions and properties of \S \ref{affpoisssn} extend naturally to stacky DGAAs.  In particular:

\begin{definition}\label{bipoldef}
Define the complex of $n$-shifted non-commutative multiderivations (or polyvectors) on $A$ by
\[
F^i \widehat{\Pol}^{nc}(A,n):= \prod_{p \ge i} \hatHHom_{(A^{e})^{\ten p}}(((\Omega^1_{A/R})_{[-n-1]})^{\ten p},[A^{\ten (p+1)}]), 
\]
for $i \ge 0$,
where we follow \cite[Proposition 4.2.1]{vdBerghDoublePoisson} in writing $[A^{\ten (p+1)}]$ for the $A^{\ten p}$-bimodule given by $A^{\ten (p+1)}$ with the $i$th copy of $A$ acting on the $i$th copy of $A$ on the right and on the $(i+1)$th copy of $A$ on the left.
We then write  $\widehat{\Pol}^{nc}(A,n):= F^0\widehat{\Pol}^{nc}(A,n)$. 

This has an associative multiplication given by the same formulae as in Definition \ref{poldef}.
\end{definition}

\begin{definition}\label{bipolcycdef}
We  define the filtered cochain complex of $n$-shifted cyclic multiderivations  on $A$ by
\[
 F^i\widehat{\Pol}^{nc}_{\cyc}(A,n):= \prod_{p \ge i} (\hatHHom_{(A^{e})^{\ten p}}(((\Omega_A^1)_{[-n-1]})^{\ten p}, \{A^{\ten p}\})^{C_p},
\]
for $i \ge 1$, where (following the proof of \cite[Proposition 4.1.2]{vdBerghDoublePoisson}) $\{A^{\ten p}\}:= [A^{\ten p}]\ten_{A^{e}}A$ is $A^{\ten p}$ given the $A^{\ten p}$-bimodule structure for which the $i$th copy of $A$ acts on the right on the $i$th copy of $A$, and acts on the left on the $(i+1 \mod p)$th copy of $A$.
We then set $F^0\widehat{\Pol}^{nc}_{\cyc}(A,n):= A/[A,A] \oplus F^1\widehat{\Pol}^{nc}_{\cyc}(A,n)$.
\end{definition}

We then define the space $\cD\cP(A,n)$ of $n$-shifted double Poisson structures and its tangent space $T\cD\cP(A,n)$ by the formulae of Definitions \ref{poissdef}, \ref{Tpoissdef}. As in Definition \ref{sigmadef}, there is a  canonical tangent vector $(\id,\sigma) \co \cD\cP(A,n) \to  T\cD\cP(A,n)$. 

Unwinding the definitions, observe that a double Poisson structure on a stacky DGAA $A \in DG^+dg_+\Alg(R)$ gives rise to a sequence $\pi_2, \pi_3, \ldots$ of $k$-brackets on the product total complex $\hatTot A =\Tot^{\Pi}A$, satisfying a form of higher double Jacobi identity. However, since the $k$-brackets lie in the spaces defined using $\hatHHom$, they come with boundedness restrictions from the original cochain direction: if we filter $\hatTot A$ by setting  $\Fil^p \hatTot A:= \Tot^{\Pi} A^{\ge p}$, then each component $\pi_k$  must be $\Fil$-bounded in the sense that for some integer $r$,  each $\Fil^p$ is mapped to $\Fil^{p+r}$.

The  internal $\cHom$ construction of Definition \ref{hatHomdef} interacts as expected with tensors, so we may adapt Definition \ref{mudef} to see that  an element
\[
 \pi \in F^2\widehat{\Pol}^{nc}_{\cyc}(A,n)^{n+2}/F^r   
\]
defines a contraction morphism
\[
\mu(-, \pi) \co \Tot^{\Pi} \Omega_A^p \to F^p\widehat{\Pol}^{nc}(A,n)^{[p]}/F^{p(r-1)}
\]
of bigraded vector spaces, noting that $\Tot^{\Pi} \Omega_A^p = \hatTot\Omega_A^p $.

\begin{definition}\label{binondegdef}
If 
$\Tot^{\Pi} (\Omega^1_{A/R}\ten_{A^{e}}(A^0)^{e}) $ is perfect as an $(A^0)^{e}$-module, and
there exists $N$ for which the chain complexes $(\Omega^1_{A/R}\ten_{A^{e}}(A^0)^{e})^i $ are acyclic for all $i >N$,
we say that an  $n$-shifted double Poisson structure $\pi = \sum_{i \ge 2}\pi_i $ is non-degenerate if contraction with  $\pi_{2}$ 
induces a quasi-isomorphism 
\[
\pi_{2}^{\flat}\co \Tot^{\Pi} (\Omega^1_{A/R}\ten_{A^{e}}(A^0)^{e})_{[-n]} \to \hatHHom_{A^{e}}(\Omega^1_A, (A^0)^{e})[-n].
\]
\end{definition}

\begin{example}\label{2PBGex1}
 As in Example \ref{BGex2}, given 
 a finite flat  non-unital associative $R$-algebra $\g$, we have a cochain algebra  $O(B\g)$ given by the bar construction of $\g$. In that example, we described the $2$-shifted pre-bisymplectic structures, and we now investigate the $2$-shifted double Poisson structures. 
 
 By considering degrees of generators, the only possible non-zero term of an element $\pi \in \cD\cP(O(B\g),2)$ is $\pi_2$, determined by its restriction to a map $\g^*\ten \g^* \to R= O(B\g)^0$; the same considerations show that the higher simplices are degenerate, so the space $ \cD\cP(O(B\g),2)$ is discrete. Cyclic invariance then implies that $\pi_2 \in \Symm^2\g$, with compatibility between $\pd$ and $\pi$ reducing to the condition that
 $[v, \pi_2]=0 \in \g^{\ten 2}$ for all $v \in \g$, where
\[
 [v, a\ten b]:= (v\cdot a)\ten b - a\ten (b\cdot v) +(v\cdot b)\ten a - b\ten(a\cdot v),
\]
for the natural left and right actions of $\g$ on $\g$ given  by multiplication. The double Poisson structure $\pi=\pi_2$ is then non-degenerate if and only if it induces an isomorphism $\g^* \to \g$.

This gives a non-commutative analogue of \cite[Example \ref{poisson-2PBG}]{poisson}, where $\cP(O(B\g)^{\comm},2) \simeq (\Symm^2\g)^{\g}$, part of a general phenomenon that shifted double Poisson structures induce shifted Poisson structures on their commutative restrictions.
\end{example}

The proof of \cite[Lemma \ref{poisson-binondeglemma}]{poisson} now adapts to give:

\begin{lemma}\label{binondeglemma}
 If $\pi_2 \in  \cP(A,n)/F^3$ is non-degenerate, then the maps
\[
 \mu(-, \pi_2) \co \Tot^{\Pi}\Omega_{A,\cyc}^p \to  \gr_F^p\widehat{\Pol}^{nc}(A,n)^{[p]}
\]
are all  quasi-isomorphisms.
\end{lemma}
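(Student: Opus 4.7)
The plan is to follow closely the strategy of \cite[Lemma \ref{poisson-binondeglemma}]{poisson}, adapted to the non-commutative and stacky setting, by showing that on the associated gradeds the contraction map $\mu(-,\pi_2)$ is essentially built out of $p$-fold tensor products of the non-degenerate morphism $\pi_2^{\flat}$, and then invoking the hypotheses on $\Omega^1_{A/R}$ from Definition \ref{binondegdef} to conclude.

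First I would unwind the explicit description of $\mu(-, \pi_2)$ on $\gr^p_F$. Since $\mu(-, \pi_2)$ is an algebra homomorphism with $\mu(a,\pi_2) = a$ and $\mu(df, \pi_2) = \{\pi_2, f\}^{\smile} = \pi_2(-,f)$, a typical generator $a_0 \, db_1 \, a_1 \cdots db_p \, a_p \in \Omega^p_A$ gets sent to $a_0 \pi_2(-,b_1) a_1 \cdots \pi_2(-,b_p) a_p$, viewed in the $p$-fold tensor component of $\widehat{\Pol}^{nc}(A,n)$. In other words, ignoring the outer actions of the $a_i$'s (which are simply the $A$-bimodule structures that persist to both sides), this map is precisely the $p$-fold tensor power of $\pi_2^{\flat} \co \Omega^1_{A/R} \to \DDer_R(A, A^e)_{[n]}$ (followed by identifying $(\DDer_R A)^{\ten_A p}$ with the component of multiderivations). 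Passage to the cyclic quotient on the source commutes with this map because the target inherits the corresponding $C_p$-symmetry from the symmetry of $\pi_2$, so the relevant assertion reduces to showing that  $(\pi_2^{\flat})^{\ten p}$ is a quasi-isomorphism after appropriate completion.

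Next I would invoke the non-degeneracy hypothesis and the assumed  perfectness of $\Tot^{\Pi}(\Omega^1_{A/R}\ten_{A^e}(A^0)^e)$. After base-change to $(A^0)^e$ via the standard resolution, $\pi_2^{\flat}$ becomes a quasi-isomorphism between perfect $(A^0)^e$-modules of bounded Tor-amplitude. Its $p$-fold derived tensor power is then automatically a quasi-isomorphism of perfect $((A^0)^e)^{\ten p}$-modules. Since working over $\Q$ ensures that taking $C_p$-invariants is exact, the desired quasi-isomorphism on the cyclic quotient follows from the one on the tensor product.

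The main obstacle will be handling $\hat{\Tot}$ (equivalently $\Tot^{\Pi}$ with the boundedness hypotheses built into $\hat{\HHom}$) carefully, since the semi-infinite total complex does not in general commute with tensor products. This is precisely the reason for the bounded-above acyclicity and perfectness hypotheses in Definition \ref{binondegdef}: they guarantee that $\Tot^{\Pi}(\Omega^1_{A/R}\ten_{A^e}(A^0)^e)$ is quasi-isomorphic to a bounded complex of perfect $(A^0)^e$-modules, which then plays nicely both with tensor powers and with $\hat{\HHom}_{A^e}(-, (A^0)^e)$, allowing the argument in the stacky direction to proceed exactly as in the affine case.
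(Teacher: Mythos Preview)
Your proposal is correct and follows essentially the same approach as the paper, which simply states that the proof of \cite[Lemma \ref{poisson-binondeglemma}]{poisson} adapts without giving further detail. You have supplied exactly the adaptation the paper gestures at: identifying $\gr_F^p\mu(-,\pi_2)$ with the $p$-fold tensor power of $\pi_2^{\flat}$, using the boundedness and perfectness hypotheses of Definition~\ref{binondegdef} to control the interaction of $\hat{\Tot}$ with tensor powers, and then passing to $C_p$-coinvariants (exact over $\Q$).
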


\subsubsection{Compatible pairs}\label{Artincompat}

The proof of Lemma \ref{mulemma} adapts to give  maps
\[
 (\pr_2 + \mu\eps) \co  \PreBiSp(A,n)/F^{p} \by \cD\cP(A,n)/F^{p} \to T\cD\cP(A,n)/F^{p}
\]
 over $\cD\cP(A,n)/F^{p}$ for all $p$, compatible with each other.

We may now define the space $\cD\Comp(A, n)$ of compatible pairs as in Definition \ref{compdef}, with the results of \S \ref{equivaffinesn} all adapting to show that the maps     $\BiSp(A,n) \la  \cD\Comp(A,n)^{\nondeg} \to  \cD\cP(A,n)^{\nondeg}$ are weak equivalences. The only modification is in the definitions of the obstruction spaces, which feature $\hatHHom$ instead of $\HHom$ and $\Tot^{\Pi}\Omega^p_{\cyc}$ instead of $\Omega^p_{\cyc}$.

\subsubsection{\'Etale functoriality}\label{Artindiagramsn}

We now adapt the constructions of \cite[\S\S  \ref{poisson-DMdiagramsn}, \ref{poisson-Artindiagramsn}]{poisson} to our context.

Given a small category $I$, an $I$-diagram $A$ of stacky DGAAs over $R$, and   right $A$-modules $M,N$ in $I$-diagrams of chain cochain complexes, we can define the  cochain complex $\hatHHom_A(M,N)$ to be the equaliser of the obvious diagram
\[
\prod_{i\in I} \hatHHom_{A(i)}(M(i),N(i)) \implies \prod_{f\co i \to j \text{ in } I}   \hatHHom_{A(i)}(M(i),f_*N(j)).
\]
All the constructions of \S \ref{stackyPoisssn} then adapt immediately; in particular, we can define 
\begin{align*}
 F^i\widehat{\Pol}^{nc}(A,n)&:= \prod_{p \ge i}\hatHHom_{(A^{e})^{\ten p}}(((\Omega^1_{A/R})_{[-n-1]})^{\ten p},[A^{\ten (p+1)}]), \\
 F^i\widehat{\Pol}^{nc}_{\cyc}(A,n)&:= \prod_{p \ge i} \hatHHom_{(A^{e})^{\ten p}}(((\Omega_A^1)_{[-n-1]})^{\ten p}, \{A^{\ten p}\})^{C_p},
\end{align*}
leading to a space $\cD\cP(A,n)$ of shifted double  Poisson structures on the diagram $A$. For most diagrams $A$, this definition is too crude to have the correct homological properties, so we now restrict attention to  categories  of  the form $[m]= (0 \to 1 \to \ldots \to m)$.

\begin{lemma}\label{calcTOmegalemma}
 If $A$ is an $[m]$-diagram in   $DG^+dg_+\Alg(R)$  which is cofibrant and  fibrant for the injective model structure (i.e. each $A(i)$ is cofibrant in the model structure of \cite[Lemma \ref{NCstacks-bidgaamodel}]{NCstacks} and the maps $A(i) \to A(i+1)$ are surjective), 
 then 
$ \hatHHom_{(A^{e})^{\ten p}}(((\Omega^1_{A/R})_{[-n-1]})^{\ten p},[A^{\ten (p+1)}])$ (resp. $\hatHHom_{(A^{e})^{\ten p}}(((\Omega_A^1)_{[-n-1]})^{\ten p}, \{A^{\ten p}\})$)
 is a model for the derived $\Hom$-complex $ \oR\hatHHom_{(A^{e})^{\ten p}}(((\Omega^1_{A/R})_{[-n-1]})^{\ten p},[A^{\ten (p+1)}])$ (resp. $\oR\hatHHom_{(A^{e})^{\ten p}}(((\Omega_A^1)_{[-n-1]})^{\ten p}, \{A^{\ten p}\})$.
 \end{lemma}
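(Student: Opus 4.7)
The plan is to verify bifibrancy of the relevant modules: the source modules should be cofibrant, the target modules fibrant, and then the strict $\hat{\HHom}$ (an equalizer over the finite category $[m]$) will compute the derived Hom-complex. The argument naturally splits into a chain/cochain part (computing $\cHom$) and a realisation part (passing to $\hat{\Tot}$).

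First, I would address the underlying chain cochain structure before worrying about the diagram. Since each $A(i)$ is cofibrant in the stacky DGAA model structure of \cite[Lemma \ref{bidgaamodel}]{NCstacks}, standard arguments show that $\Omega^1_{A(i)/R}$ is cofibrant as an $A(i)^{e}$-module in chain cochain complexes, and consequently $(\Omega^1_{A(i)/R})^{\ten p}$ is cofibrant over $(A(i)^{e})^{\ten p}$. The suspension $[-n-1]$ is immaterial. Thus $\cHom_{(A(i)^{e})^{\ten p}}(((\Omega^1_{A(i)/R})_{[-n-1]})^{\ten p}, M(i))$ computes $\oR\cHom$ for any target $M(i)$.

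Next, I would handle the diagram structure. The equaliser defining $\hat{\HHom}_A(M,N)$ can be rewritten as a homotopy limit over $[m]$ provided the terms are fibrant in the chain cochain direction and the transition maps are fibrations in an appropriate module-diagram model structure. The surjectivity of $A(i) \to A(i+1)$ ensures restriction-of-scalars is right Quillen, so the strict pullback module $f_*N(j)$ behaves as the derived pullback; combined with injective fibrancy of the target, this makes the equaliser model the homotopy equaliser, and hence compute $\oR\cHom_A(-,-)$ over the diagram. The targets $[A^{\ten(p+1)}]$ and $\{A^{\ten p}\}$ are built from $A$ itself via tensors and thus inherit the required fibrancy from the surjectivity of the transition maps.

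Finally, I would pass from $\cHom$ to $\hat{\HHom} = \hat{\Tot} \cHom$. The crucial property of $\hat{\Tot}$ recalled in the excerpt is that it preserves levelwise quasi-isomorphisms in the chain direction, and it is lax monoidal, so a levelwise quasi-isomorphism $\cHom \to \oR\cHom$ yields a quasi-isomorphism $\hat{\Tot}\cHom \to \hat{\Tot}\oR\cHom$, the latter of which realises $\oR\hat{\HHom}$. The main obstacle I anticipate is the bookkeeping needed for the cochain direction, specifically confirming that the boundedness-below hypotheses ensure $\hat{\Tot}$ agrees with the derived realisation on these particular modules rather than only on objects that are genuinely bounded; this is handled by the fact that $((\Omega^1_{A/R})^{\ten p})^i$ vanishes for $i<0$ because $A \in DG^+dg_+\Alg(R)$, so $\cHom$ lives in the appropriate half-plane and $\hat{\Tot}$ coincides with $\Tot^{\Pi}$ on the relevant terms, sending quasi-isomorphisms to quasi-isomorphisms unambiguously.
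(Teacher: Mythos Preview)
Your proposal is essentially correct and follows the same core idea as the paper: cofibrancy of each $A(i)$ gives cofibrancy of the source modules $C(i):=((\Omega^1_{A(i)/R})_{[-n-1]})^{\ten p}$ over $(A(i)^{e})^{\ten p}$, and surjectivity of the maps $A(i)\to A(i+1)$ ensures that the equaliser defining $\hat{\HHom}$ computes the homotopy limit.

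The main difference is one of economy. The paper does not decompose into $\cHom$ followed by $\hat{\Tot}$; it works directly with $\hat{\HHom}$ at each vertex. It writes the derived $\Hom$ over the diagram as the homotopy limit of the explicit zigzag
\[
\hat{\HHom}_{(A(0)^{\ten p})^{e}}(C(0),[A(0)^{\ten(p+1)}]) \to \hat{\HHom}_{(A(0)^{\ten p})^{e}}(C(0),[A(1)^{\ten(p+1)}]) \leftarrow \hat{\HHom}_{(A(1)^{\ten p})^{e}}(C(1),[A(1)^{\ten(p+1)}]) \to \cdots,
\]
then observes that surjectivity of $A(i)\to A(i+1)$ makes the rightward maps surjective (since each $C(i)$ is cofibrant), so the homotopy limit is the ordinary limit, which is precisely the equaliser $\hat{\HHom}_{(A^{\ten p})^{e}}(C,[A^{\ten(p+1)}])$. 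Your third step about $\hat{\Tot}$ and the boundedness discussion is therefore unnecessary: the surjectivity argument and the holim-versus-lim comparison already happen at the level of $\hat{\HHom}$, so there is no separate passage from a $\cHom$ quasi-isomorphism through $\hat{\Tot}$ to worry about.
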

\begin{proof}
We will prove the first statement; for the second, replace $[A^{\ten (p+1)}]$ with $\{A^{\ten p}\}$ throughout.
Because $A(i)$ is cofibrant, the double complex $C(i):= (\Omega^1_{A(i)/R})_{[-n-1]})^{\ten p}$ is cofibrant as an $(A(i)^{\ten p})^{e}$-module, so   the complex   $ \oR\hatHHom_{(A^{e})^{\ten p}}(((\Omega^1_{A/R})_{[-n-1]})^{\ten p},[A^{\ten (p+1)}])$
 is the homotopy limit of the diagram
\[
\xymatrix@R=0ex{ \hatHHom_{(A(0)^{\ten p})^{e}}(C(0),[A(0)^{\ten (p+1)}])\ar[r] &  \hatHHom_{(A(0)^{\ten p})^{e}}(C(0),[A(1)^{\ten (p+1)}])\\
\ar[ur] \hatHHom_{(A(1)^{\ten p})^{e}}(C(1),[A(1)^{\ten (p+1)}]) \ar[r] & \ldots.\phantom{  \la \HHom_{A(m)}(C(m),A(m))}
}
\]

Since the maps $A(i) \to A(i+1)$ are surjective, the maps 
\[
\hatHHom_{(A(i)^{\ten p})^{e}}(C(i),[A(i)^{\ten (p+1)}])\to  \hatHHom_{(A(i)^{\ten p})^{e}}(C(i),[A(i+1)^{\ten (p+1)}])
\]
are also, and thus the homotopy limit is calculated by the ordinary limit, which is precisely $\hatHHom_{(A^{\ten p})^{e}}(C,[A^{\ten (p+1)}])$.
\end{proof}

\begin{definition}
 Write  $DG^+dg_+\Alg(R)_{c, \onto}\subset DG^^+dg_+\Alg(R) $ for the subcategory with all cofibrant stacky  $R$-DGAAs as objects, and only surjective morphisms.
\end{definition}

We already have functors $\PreBiSp(-,n)$ and $\BiSp(-,n)$ from  $DG^+dg_+\Alg(R)$ to $s\Set$, the category of  simplicial sets, mapping quasi-isomorphisms in $DG^+dg_+\Alg(R)_{c}$ to weak equivalences. Double Poisson structures are only functorial with respect to homotopy  \'etale morphisms, in an $\infty$-functorial sense which we now make precise. 

Observe that, writing $F$ for any  of the constructions  $\cD\cP(-,n)$, $\cD\Comp(-,n)$, $\PreBiSp(-,n)$,  and the associated filtered and graded functors, applied to $[m]$-diagrams in $DG^+dg_+\Alg(R)_{c, \onto}$,  we have:

\begin{properties}\phantomsection\label{Fproperties}

 \begin{enumerate}
 \item the natural maps from $F(A(0)\to \ldots \to A(m))$ to 
\[
 F(A(0)\to A(1))\by_{F(A(1))}^h F(A(1)\to A(2))\by^h_{F(A(2))}\ldots \by_{F(A(m-1))}^hF(A(m-1) \to A(m))
\]
 are weak equivalences;

\item if the $[1]$-diagram $A \to B$ is a quasi-isomorphism, then the natural maps from $F(A \to B)$ to $F(A)$ and to $F(B)$ are weak equivalences. 

\item if the $[1]$-diagram $A \to B$ is homotopy \'etale, then the natural map from $F(A \to B)$ to $F(A)$ is a  weak equivalence. 
\item if the $[1]$-diagram $A \to B$ is homotopy \'etale and admits an $A$-bilinear retraction, then the natural map from $F(A \to B)$ to $F(B)$ is also a  weak equivalence.

\end{enumerate}
These properties follow from Lemmas \ref{calcTOmegalemma} and \ref{calcTOmegaetlemma}, together with  the obstruction calculus of \S \ref{towersn} extended to diagrams. Note that the final property has no commutative analogue.
\end{properties}

Property \ref{Fproperties}.(1) ensures that the simplicial classes $\coprod_{ A \in B_m DG^+dg_+\Alg(R)_{c, \onto}} F(A)$ fit together to give a complete Segal space $\int F$ over the nerve $BDG^+dg_+\Alg(R)_{c, \onto} $. Taking complete Segal spaces \cite[\S 6]{rezk}  as our preferred model of $\infty$-categories:

\begin{definition}\label{LintFdef}
Define $\oL DG^+dg_+\Alg(R)_{c, \onto}$, $\oL DG^+dg_+\Alg(R)$, $\oL\int F$, and $\oL s\Set$   to be the $\infty$-categories obtained by localising the respective 
categories at quasi-isomorphisms or weak equivalences.
\end{definition}

Property \ref{Fproperties}.(2)  ensures that the homotopy fibre of $\oL\int F \to  \oL DG^+dg_+\Alg(R)_{c, \onto}$ over $A$ is still just the simplicial set $F(A)$, regarded as an $\infty$-groupoid.

Since the surjections in $DG^+dg_+\Alg(R)$ are the fibrations, the inclusion   $\oL DG^+dg_+\Alg(R)_{c, \onto} \to \oL DG^+dg_+\Alg(R)$ is a weak equivalence, and we may regard $\oL\int F $ as an $\infty$-category over $\oL dgCAlg(R) $.

Furthermore,  Property \ref{Fproperties}.(3) implies that the $\infty$-functor $\oL\int F \to \oL DG^+dg_+\Alg(R) $ is a co-Cartesian fibration when we restrict to the $2$-sub-$\infty$-category $\oL DG^+dg_+\Alg(R)^{\et} $ of homotopy formally \'etale morphisms, giving:

\begin{definition}\label{inftyFdef}
When $F$ is any of   the constructions above, we define 
\[
 \oR F \co \oL DG^+dg_+\Alg(R)^{\et} \to  \oL s\Set
\]
to be the $\infty$-functor whose Grothendieck construction is $\oL\int F  $.
\end{definition}

In particular, the observations above ensure that 
\[
 (\oR F)(A) \simeq F(A)
\]
for all cofibrant stacky DGAAs $A$ over $R$.

An immediate consequence of the generalisation in \S \ref{Artincompat}  of Corollary \ref{compatcor2} to stacky DGAAs  is that the canonical maps
\begin{eqnarray*}
  \oR\cD\Comp(-,n)^{\nondeg} &\to& \oR\BiSp(-,n)  \\   
    \oR\cD\Comp(-,n)^{\nondeg} &\to&  \oR\cD\cP(-,n)^{\nondeg}           
\end{eqnarray*}
 are natural weak equivalences of $\infty$-functors on $\oL DG^+dg_+\Alg(R)^{\et}$. 
In other words, on the $\infty$-category of homotopy \'etale morphisms of stacky DGAAs, we have a functorial equivalence between shifted bi-symplectic structures and shifted double Poisson structures.

\subsubsection{Structures on derived NC prestacks}\label{derNCprestacksn}

We are now in a position to adapt Definition \ref{PreBiSphgsdef} to double Poisson structures:
\begin{definition}\label{cPhgsdef}
Given an derived NC prestack   $F \in DG^+\Aff^{nc}(R)^{\wedge}$ which is homogeneous and has bounded below cotangent complexes $L_F(B,x)$ (in the sense of \cite[Definition \ref{NCstacks-Fcotdef}]{NCstacks}) at all points $x \in F(B)$, we define 
 the space $\cD\cP(F,n)$  of $n$-shifted double Poisson structures on $F$ by
\begin{align*}
\cD\cP(F,n):= \map_{\oL (dg^+DG^+\Aff(R)^{\et})^{\wedge}}((D_*F)_{\rig}, \oR\cD\cP(-,n))
 \end{align*} 
 for the functor $(D_*F)_{\rig}$ of rigid points from Definition \ref{Frigdefhere},
 and similarly for the space  $\cD\cP(F,n)^{\nondeg}$  of non-degenerate $n$-shifted double Poisson structures on $F$, noting that non-degeneracy is preserved by homotopy \'etale morphisms. We define the space $\cD\Comp(F,n)$ of non-degenerate compatible pairs similarly.
\end{definition}
 
In other words, an $n$-shifted double Poisson structure on $F$ consists of compatible   $n$-shifted double Poisson structures on $A$ for all rigid points $x \in D_*F(A)$ and all stacky DGAAs $A$. 

Reasoning as in Corollary \ref{integratecorBiSp}, but with the $\Hom$ form of Lemma \ref{calcTOmegaetlemma}, we  have:
 \begin{corollary}\label{integratecorDP}
  For a sqc derived NC Artin prestack $F$ coming from a simplicial diagram $X_{\bt}$ of derived NC affines, the natural maps $\cD\cP(F,n) \to \cD\cP(D^*O(X),n)$ and $\cD\cP(F,n) \to \cD\cP(D^*O(X),n)$ are weak equivalences.
  \end{corollary}

In other words, every shifted double Poisson structure on $D^*O(X)$ (an NC analogue of a derived Lie algebroid) integrates uniquely to one on $F$ (an algebraic NC analogue of a derived  Lie groupoid), in stark contrast with the commutative setting.

\begin{example}\label{2PBGex2b}
For a finite flat associative $R$-algebra $\g$, we can consider the NC affine group $G:= \Pi_{\g/R}\bG_m= \bG_m(A\ten_R-)$  and its nerve   $BG=\Pi_{\g/R}B\bG_m$, for the Weil restriction functor $\Pi_{\g/R}$ of \cite[\S \ref{NCstacks-weilrestrn}]{NCstacks}; when $\g$ is the matrix ring $\Mat_n$, note that $\Pi_{\Mat_n}\bG_m=\GL_n$.  

Corollary \ref{integratecorDP} then gives $\cD\cP(BG,n) \simeq \cD\cP(B\g,n)$, which is roughly the space of double $P_{n+1}$-algebra structures on the DGAA $O(B\g)$ from Example \ref{BGex2}. When $R=\H_0R$ and $n=2$, this is just the discrete space of  Example \ref{2PBGex1}.
\end{example}

An immediate consequence of \S\S \ref{Artincompat}--\ref{Artindiagramsn} is now: 
  \begin{theorem}\label{Artinthm}
For any derived NC prestack $F$ over $R$ which is homogeneous with perfect cotangent complexes (in the sense of \cite[Definition \ref{NCstacks-Fcotdef}]{NCstacks}) at all points, there are natural weak equivalences
\[
 \BiSp(F,n) \la \cD\Comp(F,n)^{\nondeg}\to \cD\cP(F,n)^{\nondeg}
\]
between the spaces of $n$-shifted bi-symplectic structures and of non-degenerate $n$-shifted double Poisson structures on $F$.
\end{theorem}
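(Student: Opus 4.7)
The plan is to reduce this to the stacky DGAA case, which has already been established in \S \ref{Artincompat}, and then invoke the \'etale functoriality framework from \S \ref{Artindiagramsn} to transport the equivalence through the mapping space definitions. First, I would unpack Definitions \ref{PreBiSphgsdef} and \ref{cPhgsdef}: each of the three spaces $\BiSp(F,n)$, $\cD\Comp(F,n)^{\nondeg}$, $\cD\cP(F,n)^{\nondeg}$ is defined as a mapping space
\[
\map_{\oL(dg^+DG^+\Aff(R)^{\et})^{\wedge}}((D_*F)_{\rig}, \oR\Phi(-,n))
\]
for $\Phi$ being  $\BiSp$, $\cD\Comp^{\nondeg}$, or $\cD\cP^{\nondeg}$ respectively. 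Thus it suffices to show that the canonical zig-zag of $\infty$-natural transformations
\[
\oR\BiSp(-,n) \la \oR\cD\Comp(-,n)^{\nondeg} \to \oR\cD\cP(-,n)^{\nondeg}
\]
between $\infty$-functors on $\oL DG^+dg_+\Alg(R)^{\et}$ restricts to levelwise weak equivalences on the essential image of rigid points of $D_*F$.

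The second step is to verify that for each individual cofibrant stacky DGAA $A$, the maps $\BiSp(A,n) \la \cD\Comp(A,n)^{\nondeg} \to \cD\cP(A,n)^{\nondeg}$ are weak equivalences. This is precisely the content of the stacky generalisation of Corollary \ref{compatcor2}, as noted at the end of \S \ref{Artincompat}: the obstruction-theoretic proofs of Propositions \ref{compatP1}, \ref{compatcor1}, \ref{level0prop}, and Corollary \ref{compatcor2} adapt verbatim once $\HHom$ and $\Omega^p_{\cyc}$ are replaced with $\hat{\HHom}$ and $\Tot^{\Pi}\Omega^p_{\cyc}$, using Lemma \ref{binondeglemma} in place of its affine counterpart. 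The perfectness and boundedness hypotheses needed to make the non-degeneracy conditions symmetric (Definitions \ref{bisymplecticdefstacky} and \ref{binondegdef}) are ensured by the rigidity condition of Definition \ref{Frigdefhere} combined with the perfect cotangent complex hypothesis on $F$.

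The third step is to upgrade these pointwise equivalences to equivalences of $\infty$-functors on $\oL DG^+dg_+\Alg(R)^{\et}$. This is exactly what the machinery of Definition \ref{inftyFdef} is designed for: Property \ref{Fproperties}(1) assembles the construction into a complete Segal space $\oL\int F$, Property \ref{Fproperties}(2) identifies its fibres with $F(A)$, and Property \ref{Fproperties}(3) turns the projection into a co-Cartesian fibration over \'etale morphisms. The natural transformations $\cD\Comp^{\nondeg} \to \BiSp$ and $\cD\Comp^{\nondeg} \to \cD\cP^{\nondeg}$ are compatible with the simplicial structure on $B_\bullet DG^+dg_+\Alg(R)^{\et}_{c,\onto}$ and fibrewise weak equivalences, hence induce weak equivalences of co-Cartesian fibrations, and therefore of the associated $\infty$-functors $\oR\Phi(-,n)$.

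The final step is to apply $\map_{\oL(dg^+DG^+\Aff(R)^{\et})^{\wedge}}((D_*F)_{\rig},-)$, which preserves levelwise weak equivalences of simplicial-set-valued functors on $\oL DG^+dg_+\Alg(R)^{\et}$, yielding the desired equivalences. The most subtle point I expect is the bookkeeping in the third step: ensuring that the definitions of $\cD\Comp$ and $\cD\cP$ via $\hat{\HHom}$ satisfy Property \ref{Fproperties}(3) (homotopy \'etale invariance), since the double Poisson construction uses tensor powers of $\Omega^1_{A/R}$ rather than just $\Omega^1_{A/R}$ itself and these tensor powers need not be perfect in the stacky direction. This is precisely why rigidity is imposed and why the functors are only restricted to homotopy \'etale morphisms; the compatibility can be checked termwise using the description of $\hat{\HHom}$ in Lemma \ref{calcTOmegalemma} together with the fact that homotopy \'etale morphisms induce quasi-isomorphisms on the relevant cotangent contractions after the rigidity reduction.
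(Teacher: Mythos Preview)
Your proposal is correct and takes essentially the same approach as the paper: the paper's proof consists of the single sentence ``An immediate consequence of \S\S \ref{Artincompat}--\ref{Artindiagramsn} is now:'', and your four steps are precisely a detailed unpacking of how those two sections assemble into the theorem via Definitions \ref{PreBiSphgsdef} and \ref{cPhgsdef}. The key observation you correctly isolate --- that the zig-zag $\oR\BiSp(-,n) \la \oR\cD\Comp(-,n)^{\nondeg} \to \oR\cD\cP(-,n)^{\nondeg}$ is already recorded as a natural weak equivalence of $\infty$-functors at the end of \S \ref{Artindiagramsn}, so applying $\map((D_*F)_{\rig},-)$ finishes the argument --- is exactly the paper's logic.
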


Combining Theorem \ref{Artinthm} with Theorems \ref{Perfthm2} and \ref{Morthm} gives the following immediate corollaries:

\begin{corollary}\label{Perfpoissoncor}
Take  a locally proper dg category  $\cA$ over $R$  which is right $d$-Calabi--Yau in the sense that it is  equipped with a cyclic cohomology class $\alpha \in \HC_R^{-d}(\cA)$ whose  image $\bar{\alpha}$ under the natural map $\HC_R^{-d}(\cA) \to \HH_R^{-d}(\cA,\cA^*)=\Ext^{-d}_{\cA^{\oL,e}}(\cA(-,-),\cA^*(-,-))$ is non-degenerate. Then there is an associated class in $\pi_0\cD\cP(\Perf_{\cA},2-d)^{\nondeg}$ of  non-degenerate $(2-d)$-shifted double Poisson structures on the derived NC prestack of perfect $\cA$-modules.
\end{corollary}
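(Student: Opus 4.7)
The proof is indeed an immediate synthesis of the two results cited. The plan is to invoke Theorem \ref{Perfthm2} to produce a $(2-d)$-shifted bisymplectic structure on $\Perf_{\cA}$ from the Calabi--Yau datum, and then transport it across the equivalence of Theorem \ref{Artinthm} to obtain the desired non-degenerate shifted double Poisson structure.

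In more detail, first I would verify that the hypotheses of both theorems are met. Local properness of $\cA$ is exactly what is needed in Theorem \ref{Perfthm2}, and it also implies, via the reasoning of \cite[\S 2]{dmsch} recalled just before that theorem, that $\Perf_{\cA}$ is a homogeneous derived NC prestack with perfect cotangent complex at each point $[E]$, namely $\bL_{\Perf_{\cA},[E]} \simeq E\ten_{\cA}^{\oL}\cA^*(-,-)\ten_{\cA}^{\oL}E^*[-1]$. Hence the input required by Theorem \ref{Artinthm} is available.

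Next, I would apply Theorem \ref{Perfthm2} to the given class $\alpha \in \HC_R^{-d}(\cA)$: this produces an element $\omega(\alpha) \in \pi_0\PreBiSp(\Perf_{\cA},2-d)$ whose leading term $\omega_2$ is determined, at a point $[E]$, by the quasi-isomorphism $\bar{\alpha} \co \cA(-,-) \to \cA^*(-,-)[-d]$. The non-degeneracy hypothesis on $\bar{\alpha}$ is precisely what is shown in the proof of Theorem \ref{Perfthm2} to guarantee that $\omega_2^{\sharp}$ is a quasi-isomorphism, so $\omega(\alpha) \in \pi_0\BiSp(\Perf_{\cA},2-d)$.

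Finally, Theorem \ref{Artinthm} supplies a zigzag of weak equivalences
\[
 \BiSp(\Perf_{\cA},2-d) \xleftarrow{\sim} \cD\Comp(\Perf_{\cA},2-d)^{\nondeg} \xrightarrow{\sim} \cD\cP(\Perf_{\cA},2-d)^{\nondeg},
\]
and on taking $\pi_0$ this becomes an honest bijection. Choosing a preimage of $[\omega(\alpha)]$ in $\pi_0\cD\Comp(\Perf_{\cA},2-d)^{\nondeg}$ and projecting to $\pi_0\cD\cP(\Perf_{\cA},2-d)^{\nondeg}$ yields the desired class of non-degenerate $(2-d)$-shifted double Poisson structures. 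Since both maps in Theorem \ref{Artinthm} are weak equivalences, this class is canonical up to homotopy, depending only on $\alpha$. There is no substantive obstacle here: the only thing to watch is that one really applies Theorem \ref{Artinthm} to the path-component consisting of the non-degenerate structures, which is legitimate because $\omega(\alpha)$ lies in $\BiSp$ rather than merely $\PreBiSp$.
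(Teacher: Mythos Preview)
Your proposal is correct and follows exactly the paper's approach: the corollary is stated as an immediate consequence of combining Theorem \ref{Perfthm2} with Theorem \ref{Artinthm}, which is precisely what you do. Your additional verification that $\Perf_{\cA}$ satisfies the hypotheses of Theorem \ref{Artinthm} (homogeneous with perfect cotangent complexes) is a helpful expansion of what the paper leaves implicit.
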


 \cite[Example \ref{NCstacks-DstarPerf}]{NCstacks} to interpret $D_*\Perf$, this says that given a right $d$-CY locally proper dg category $\cA$ as above, there is a functorial $(2-d)$-shifted double Poisson structure  (as in \S \ref{stackyPoisssn}, Definition  \ref{poissdef}) on every  stacky DGAA $B$  equipped with a  homotopy-Cartesian right $\cA\ten^{\oL}_RB$-module $E^{\bt}_{\bt}$ in double complexes  for which $E^0_{\bt}$ is perfect over $\cA\ten^{\oL}_RB^0$, and which is rigid in the sense that the non-commutative Atiyah class
\[
\At_E \co  \oR\hatHHom_{B^{\oL,e}}(\Omega^1_B,M)  \to \oR\HHom_{ \cA\ten^{\oL}_RB^0}(E^0_{\bt}, E^0_{\bt}\ten^{\oL}_{B^0}M )
\]
is a quasi-isomorphism for all $B^0$-bimodules $M$.

For a precursor of this result, see \cite[Theorem 15]{BerestChenEshmatovRamadoss}, which puts a double $(n + 2)$-Poisson structure on the Koszul dual of an  $n$-cyclic coassociative DG coalgebra, where the latter is required to satisfy a strict non-degeneracy condition which is not quasi-isomorphism invariant.

\begin{corollary}\label{Morpoissoncor}
Take a  smooth dg category $\cA$ over $R$ which is left $d$-Calabi--Yau in the sense that it is  equipped with a negative cyclic homology class $\alpha \in \HN_{R,d}(\cA)$ whose image $\bar{\alpha}$ under the natural map $\HN_{R,d}(\cA) \to \HH_{R,d}(\cA)= \Tor_{d}^{\cA^{\oL,e}}(\cA(-,-),\cA(-,-))$ is non-degenerate. Then there is an associated class in $\pi_0\cD\cP(\Mor_{\cA},2-d)^{\nondeg}$ of  non-degenerate $(2-d)$-shifted double Poisson structures on the derived NC prestack of derived Morita morphisms from $\cA$.
\end{corollary}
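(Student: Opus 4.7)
The plan is to deduce this as a direct consequence of Theorem \ref{Morthm} and the main equivalence Theorem \ref{Artinthm}. First I would apply Theorem \ref{Morthm} to the non-degenerate class $\alpha \in \HN_{R,d}(\cA)$, which produces a $(2-d)$-shifted pre-bisymplectic structure on $\Mor_{\cA}$; the non-degeneracy hypothesis on $\bar{\alpha} \in \HH_{R,d}(\cA)$ is precisely what is needed in Theorem \ref{Morthm} to guarantee that the induced map $\omega_2^{\sharp}$ on tangent complexes is a quasi-isomorphism, so that the resulting element lies in $\pi_0\BiSp(\Mor_{\cA},2-d)$.

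Next I would verify the hypotheses of Theorem \ref{Artinthm} for $F = \Mor_{\cA}$. The homogeneity of $\Mor_{\cA}$ is recorded in the paragraph preceding Theorem \ref{Morthm}, following the reasoning of \cite[\S 2]{dmsch}, and the perfectness of its cotangent complex is the content of the explicit computation
\[
 \bL_{\Mor_{\cA},[E]} \simeq E'\ten_{\cA}^{\oL} \cA(-,-) \ten_{\cA}^{\oL}E[-1] \simeq \cA(-,-)\ten_{\cA^{\oL,e}}^{\oL}(E'\ten_R^{\oL}E)[-1]
\]
given just before Theorem \ref{Morthm}, which uses smoothness of $\cA$ (so that $\cA(-,-)$ is a perfect $\cA^{\oL,e}$-module) together with perfectness of $E$ over $B$. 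Thus $\Mor_{\cA}$ is a homogeneous derived NC prestack with perfect cotangent complexes at every point, and Theorem \ref{Artinthm} applies, furnishing natural weak equivalences
\[
\BiSp(\Mor_{\cA},2-d) \la \cD\Comp(\Mor_{\cA},2-d)^{\nondeg} \to \cD\cP(\Mor_{\cA},2-d)^{\nondeg}.
\]

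Finally, I would transport the path-component of $\pi_0 \BiSp(\Mor_{\cA},2-d)$ constructed via Theorem \ref{Morthm} across these equivalences to obtain the desired class in $\pi_0\cD\cP(\Mor_{\cA},2-d)^{\nondeg}$. There is essentially no hard step beyond invoking the cited theorems; the only subtlety is a bookkeeping check that the equivalence $\BiSp \simeq \cD\cP^{\nondeg}$ is compatible with path components, which is immediate since both spaces are connected through $\cD\Comp^{\nondeg}$ via weak equivalences, so $\pi_0$ is preserved. No separate non-degeneracy verification for the double Poisson side is needed, since Theorem \ref{Artinthm} produces non-degenerate double Poisson structures directly from bisymplectic ones.
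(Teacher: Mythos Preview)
Your proposal is correct and follows essentially the same approach as the paper: the corollary is stated immediately after the sentence ``Combining Theorem \ref{Artinthm} with Theorems \ref{Perfthm2} and \ref{Morthm} gives the following immediate corollaries,'' so the paper's proof is precisely the combination you describe. Your additional verification that $\Mor_{\cA}$ satisfies the hypotheses of Theorem \ref{Artinthm} (homogeneity and perfect cotangent complexes, via smoothness of $\cA$) makes explicit what the paper leaves implicit, but the argument is the same.
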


Adapting \cite[Example \ref{NCstacks-DstarPerf}]{NCstacks} to interpret $D_*\Mor$,  this says that given a left $d$-CY smooth dg category $\cA$ as above, there is a functorial $(2-d)$-shifted double Poisson structure (as in \S \ref{stackyPoisssn}, Definition  \ref{poissdef}) on every  stacky DGAA $B$  equipped with a  homotopy-Cartesian right $\cA^{\op}\ten^{\oL}_RB$-module $E^{\bt}_{\bt}$ in double complexes  for which $E^0_{\bt}(X)$ is perfect over $B^0$ for all $X \in \cA$, and which is rigid in the sense that the non-commutative Atiyah class 
\[
\At_E \co  \oR\hatHHom_{B^{\oL,e}}(\Omega^1_B,M)  \to \oR\HHom_{ \cA^{\op}\ten^{\oL}_RB^0}(E^0_{\bt}, E^0_{\bt}\ten^{\oL}_{B^0}M )
\]
is a quasi-isomorphism for all $B^0$-bimodules $M$.

\subsubsection{Pre-Calabi--Yau structures via \tps{$\bG_m$}{Gm}-actions}\label{preCYsn}


Following \cite[\S 2.5]{yeungPreCYModReps}, write $F^p\sP^{\tot}(A,n)$ for the analogue of $F^p\widehat{\Pol}^{nc}_{\cyc}(A,n)$ given by replacing $\Omega^1_A$ with the complex $\cS'(A)$ of \S \ref{CYsn}. For $p=2$, this carries a shifted Lie bracket defined similarly to that of Proposition \ref{bracketprop}.

Resuming the notation of \S \ref{CYsn}, we have:
 \begin{lemma}\label{CYlemmaPol}
  For $p>0$, the DGLA $F^2\Pol^{nc}_{\cyc}(A\<s_{\ad}\>,n)^{[n+1]}$ admits a canonical filtered quasi-isomorphism from  $F^2\sP(A,n)^{[n+1]}$. 
 \end{lemma}
\begin{proof}
The isomorphisms  $\Omega^1_{A\<s_{\ad}\>} \cong A\<s_l\>\ten_A\cS'(A)\ten_AA\<s_r\>$, $A\<s_{\ad}\>\cong A\<s_l\>\ten_{A\<s_0\>}A\<s_r\>$ and $A\<s_0\>\cong A\<s_r\>\ten_{A\<s_{\ad}>}A\<s_l\>$ from \S \ref{CYsn} give
\begin{align*}
&\hatHHom_{(A\<s_{\ad}\>^{e})^{\ten p}}(((\Omega^1_{A\<s_{\ad}\>})_{[-n-1]})^{\ten p},\{A\<s_{\ad}\>^{\ten p}\})\\
&\cong  
\hatHHom_{(A\<s_0\>^{e})^{\ten p}}(((A\<s_0\>\ten_A\cS'(A)\ten_AA\<s_0\>)_{[-n-1]})^{\ten p}, \{A\<s_0\>^{\ten p}\}) \\
&\cong
\hatHHom_{(A^{e})^{\ten p}}((\cS'(A)_{[-n-1]})^{\ten p},\{A\<s_0\>^{\ten p}\}).
\end{align*}
Arguing as in Lemma \ref{CYlemmaDR}, this admits a contracting  homotopy to 
\[
 \hatHHom_{(A^{e})^{\ten p}}((\cS'(A)_{[-n-1]})^{\ten p}, \{A^{\ten p}\}),
\]
so these combine to give  a quasi-isomorphic subcomplex  of $F^j\Pol^{nc}_{\cyc}(A\<s_{\ad}\>,n)$ isomorphic to $F^j\sP^{\tot}(A,n)$. A routine check shows that for $j=2$ this subcomplex is closed under the Lie bracket, which agrees with that on  $F^2\sP^{\tot}(A,n)$.
 \end{proof}
Similarly to the situation with Lemma \ref{CYlemmaDR}, the proof does not give a subcomplex of $F^2\Pol^{nc}(A\<s_{\ad}\>,n)^{[n+1]}$, only of its cyclic quotient.

\medskip
Following \cite[Definition 2.17 and Theorem 2.46]{yeungPreCYModReps}, 
We can define a $d$-pre-Calabi--Yau (or $d$-pre-CY) structure on stacky DGAA $A$ to be an element of $\mc(F^2\sP^{\tot}(A,2-d)$ whose leading term induces a quasi-isomorphism
 \[
  \Tot^{\Pi} (A\ten_{A^{\oL,e}}(A^0)^{\oL,e})_{[d]} \hatHHom_{A^{\oL,e}}(A, (A^0)^{\oL,e})
 \]
--- this is consistent with \cite[Definition 2.17]{yeungPreCYModReps} by \cite[Theorem 2.46]{yeungPreCYModReps}. As in earlier versions of \cite{yeungPreCYModReps}, we can form a space of such structures as  $\Lim_p \mmc(F^2\sP^{\tot}(A,2-d)/F^p)$.

\begin{proposition}\label{preCYprop}
 For $A \in DG^+dg\Alg(R)$ cofibrant, the following are equivalent:
 \begin{enumerate}
  \item The space of smooth $d$-pre-Calabi--Yau structures on $A$.
  \item The space $\cD\cP([\Spec^{nc} A/\g_m],2-d)$ of $(2-d)$-shifted double Poisson structures on the formal quotient of $\Spec^{nc} A$ by $\g_m$.
  \item The space $\cD\cP(\oR[\Spec^{nc} A/\bG_m],2-d)$ of $(2-d)$-shifted double Poisson structures on the derived  quotient of $\Spec^{nc} A$ by $\bG_m$.
  \end{enumerate}
\end{proposition}
 \begin{proof}
The equivalence of the first two follows immediately from the filtered quasi-isomorphism of Lemma \ref{CYlemmaPol}. The second equivalence then follows from Proposition \ref{integratecorDP}, reasoning as in Proposition \ref{CYprop}.
 \end{proof}

 \begin{remarks}
  Although the characterisation of pre-CY structures as shifted double Poisson structures on $[\Spec^{nc} A/\g_m]$ gives a fairly simple description, the characterisation as structures on  $\oR[\Spec^{nc} A/\bG_m]$ is much more powerful. In particular, combined with Remarks \ref{CostelloRozenblyumRmk} below, it  induces shifted Poisson structures on (commutative) derived moduli stacks of representations, recovering  \cite[Theorem 4.39]{yeungPreCYModReps} for $A$.
  
The methods of Remark \ref{dgcatrmk} also provide an extension of  Proposition \ref{preCYprop} to dg categories. In particular, for a dg category with finite object set $X$, the space of  pre-$d$-Calabi--Yau structures on $\cA$ is equivalent to the spaces $\cD\cP(A\<s_{\ad}\>/([s,1_x])_{x \in X} ,2-d)$ and $\cD\cP(\oR[\Spec^{nc} A/\mathrm{Stab}_{\bG_m}X],2-d)$.

Applied to these prestacks, Theorem \ref{Artinthm} combines with Propositions \ref{CYprop} and \ref{preCYprop} to recover the equivalence between CY and non-degenerate pre-CY structures from earlier versions of \cite{yeungPreCYModReps} and from  \cite{KontsevichTakedaVlassopoulosLegendre} (which, though later than the main body of this article, precedes the comparisons of \S\S \ref{CYsn}, \ref{preCYsn}). All three approaches use a Legendre transformation, with apparent simplifications in our setting because the  Lie bracket on cyclic NC polyvectors lifts to a double Poisson bracket.
 \end{remarks}

\subsection{Shifted double co-isotropic structures}\label{coisosn}

We will now see how the comparisons we have seen so far extend to shifted bi-Lagrangians.

In the commutative setting, as in \cite{MelaniSafronovI}, 
an $n$-shifted co-isotropic structure on a morphism $f \co A \to B$ consists of an $n$-shifted Poisson structure on $A$, an $(n-1)$-shifted Poisson  structure $\pi$ on $B$ and a lift of $f$ to a strong homotopy $P_{n+1}$-algebra morphism $A \to (\widehat{\Pol}(B,n-1),\delta +[\pi,-])$ to the complex of twisted polyvectors. There is an interpretation in terms of additivity showing that the latter data are equivalent to a strongly homotopy associative action of $A$ on the $P_n$-algebra $B$.
 
In our non-commutative setting, we would like to adapt this definition to say that an $n$-shifted double  co-isotropic structure on a morphism $f \co A \to B$ consists of an $n$-shifted double Poisson structure on $A$, an $(n-1)$-shifted double Poisson  structure $\pi$ on $B$ and a lift of $f$ to a strong homotopy double $P_{n+1}$-algebra morphism $A \to \widehat{\Pol}_{\pi}^{nc}(B,n-1)$ to the complex of twisted polyvectors. The required notion of strong homotopy morphisms is provided by Koszul duality for protoperads, 
 via \cite[Theorems 2.24 and 5.11]{lerayProtoperads2Koszul} and \cite{HoffbeckLerayVallette}. Beware that (similarly to  the commutative case) strong homotopy double Poisson algebras form a larger category than our formulation of shifted double Poisson structures as DGAAs equipped with $k$-brackets satisfying a higher double Jacobi identity as described in Remark \ref{DPinterpretrmk}. However, the description of \cite[Proposition 5.7 and Corollary 5.10]{lerayProtoperads2Koszul} ensures that  every  strong homotopy double $P_{n+1}$-algebra is quasi-isomorphic to a cofibrant DGAA $A$ equipped with an $n$-shifted double Poisson structure in our sense. 

 
We will not attempt to establish non-commutative analogues of  all the results of \cite{MelaniSafronovI,MelaniSafronovII} here, but instead focus on non-degenerate double co-isotropic structures, i.e. those for which the double Poisson structure on $A$ is non-degenerate and the morphism $A \to \widehat{\Pol}^{nc}_{\pi}(B,n-1) $ is homotopy formally \'etale. In these cases, we can bypass Koszul duality altogether, since double Poisson structures are functorial with respect to homotopy formally \'etale morphisms. 
This allows us to characterise a  non-degenerate $n$-shifted double co-isotropic structure on $f \co A \to B$  as a  non-degenerate $n$-shifted double  Poisson structure $\varpi$ on $A$, an $(n-1)$-shifted double  Poisson structure $\pi$ on $B$, a homotopy formally \'etale morphism $\tilde{f} \co  A \to \widehat{\Pol}_{\pi}^{nc}(B,n-1)$ of associative algebras (regarding the target as a pro-object) lifting $f$, and a homotopy $k \co \tilde{f}^*\varpi \sim \phi$ between the pullback of $\varpi$ and the canonical completed $n$-shifted double  Poisson structure $\phi$  on $\widehat{\Pol}_{\pi}^{nc}(B,n-1)$ (see the proof of Proposition \ref{bracketprop}).

\begin{proposition}\label{coisoprop}
 On a morphism $f \co A \to B$ of cofibrant $R$-DGAAs in $dg_+\Alg(R)$, there is a natural equivalence between the space $\BiLag(A,B;n)$ of $n$-shifted  bi-Lagrangian structures  and the space $\cD\Coiso(A,B;n)$ of non-degenerate $n$-shifted double co-isotropic structures.
\end{proposition}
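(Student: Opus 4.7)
The plan is to mimic the proof of Corollary \ref{compatcor2} in a relative setting, introducing an intermediate space of compatible triples that sits over both $\BiLag(A,B;n)$ and $\cD\Coiso(A,B;n)$. Specifically, I would define a space $\cD\Comp(A,B;n)^{\nondeg}$ of data consisting of: (i) a compatible pair $(\omega,\varpi) \in \cD\Comp(A,n)^{\nondeg}$; (ii) an $(n{-}1)$-shifted double Poisson structure $\pi$ on $B$; (iii) a homotopy formally \'etale lift $\tilde f \co A \to \widehat{\Pol}^{nc}_{\pi}(B,n-1)$ of $f$ realising $\varpi$ as $\tilde f^*\phi$ up to a specified homotopy; (iv) a bi-isotropic lift $(\omega,\lambda)$ of $\omega$ to $B$; and (v) a compatibility homotopy identifying $\lambda$ with the contraction data determined by $\pi$ via the relative analogue of $\mu$. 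The target of the proposition is then the composite weak equivalence $\BiLag(A,B;n) \xleftarrow{\simeq} \cD\Comp(A,B;n)^{\nondeg} \xrightarrow{\simeq} \cD\Coiso(A,B;n)$.

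For the projection $\cD\Comp(A,B;n)^{\nondeg}\to\BiLag(A,B;n)$, one shows that the projection to $\BiSp(A,n)\times_{\PreBiSp(A,n)}\BiIso(A,B;n)$ is an equivalence by adapting the tower-of-obstructions argument of \S \ref{towersn}. The Poisson side on $A$ is handled by Corollary \ref{compatcor2}, while the relative obstruction groups controlling $(\pi,\tilde f,\lambda)$ over a fixed $(\omega,\varpi)$ are computed by a cone of cyclic polyvector complexes; non-degeneracy of $\varpi$ together with the homotopy \'etaleness condition reduces these graded pieces to the bi-Lagrangian cone complex of Definition \ref{Lagdef}, so the relative analogue of Lemma \ref{nondegtangent} applies.

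For the projection $\cD\Comp(A,B;n)^{\nondeg}\to\cD\Coiso(A,B;n)$, starting from a non-degenerate double co-isotropic datum $(\varpi,\pi,\tilde f)$, Corollary \ref{compatcor2} produces $\omega$ compatible with $\varpi$. The desired $\lambda$ is obtained by pulling back along $\tilde f$ the cyclic de Rham class on $\widehat{\Pol}^{nc}_{\pi}(B,n-1)$ representing the universal completed $n$-shifted bisymplectic form, using the fact that protoperadic formally \'etale morphisms transport such classes. One then checks that the resulting $\lambda$ is bi-Lagrangian: homotopy \'etaleness of $\tilde f$ identifies the cone $\cone(\DDer(B,B^{e})\to \DDer_R(A,B^{e}))$ with the contraction-target of $\pi$, and contracting with $\omega$ extended by $\lambda$ gives a quasi-isomorphism precisely because $\pi$ determines the cotangent-tangent duality for the pair.

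The hard step will be verifying that the fibre sequences comparing the two non-degeneracy conditions actually match up, i.e. showing that the bi-Lagrangian cone condition of Definition \ref{Lagdef} is equivalent, via contraction with $(\omega,\lambda)$, to the homotopy \'etale condition on $\tilde f$. Concretely, one must trace through a relative version of Example \ref{compatex2}, showing that $\tilde f^{\flat}\circ(\omega,\lambda)^{\sharp}\circ\tilde f^{\flat}$ is homotopic to $\tilde f^{\flat}$ itself. This reduces, after unwinding the double bracket formulae of \cite{vdBerghDoublePoisson}, to verifying that the relative Loday bracket induced by $\pi$ on the cone of $A\to B$-bimodules reproduces the relative cotangent complex of $f$. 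Once this identification is in place, both projections become weak equivalences by the same spectral-sequence comparison as in Propositions \ref{compatcor1} and \ref{level0prop}, extended to the relative setting.
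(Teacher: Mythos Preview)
Your plan is in the right spirit but adds a layer the paper avoids. The paper does not introduce a separate intermediate space of compatible triples containing both bi-Lagrangian and co-isotropic data simultaneously. Instead, it first applies Corollary~\ref{compatcor2} in a completed form to rewrite a non-degenerate co-isotropic datum $(\varpi,\pi,\tilde f,k)$ as $(\omega,\pi,\tilde f,l)$, where $\omega$ is bisymplectic on $A$ and $l$ is a homotopy $\tilde f^*\omega\sim\psi$ in $F^2\DR_{\cyc}(\widehat{\Pol}^{nc}_\pi(B,n-1))$ for the canonical bisymplectic $\psi$ on the polyvector algebra. The bi-isotropic structure $\lambda$ is then produced in one stroke as $r^*l$ for the projection $r\co\widehat{\Pol}^{nc}_\pi(B,n-1)\to B$, using that $r^*\psi=0$; no separate compatibility datum between $\lambda$ and $\pi$ is needed because $\lambda$ is \emph{constructed} from the co-isotropic side.

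The technical point your outline is missing is the cofiltration used to compare the two towers. The paper's key device is a mixed filtration $\Fil$ on $\DR(\widehat{\Pol}^{nc}_\pi(B,n-1))$, defined as the smallest multiplicative filtration containing both the Hodge filtration $F^p_{\DR}$ and the polyvector filtration $F^p_{\Pol}$. The crucial observation is that the associated graded maps
\[
\gr_{\Fil}^p\DR(\widehat{\Pol}^{nc}_\pi(B,n-1))\to\gr_F^p\DR(B)=\Omega^p_B
\]
are quasi-isomorphisms. This single statement packages together all the obstruction comparisons: the co-isotropic obstruction tower (governed by the quadruple $(\omega,\pi,\tilde f,l)$ modulo $\Fil^p$) and the bi-Lagrangian tower (governed by $\cone(\Omega^p_{A,\cyc}\to\Omega^p_{B,\cyc})$) now have matching associated graded pieces, so the comparison is an equivalence by induction. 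Your proposed relative analogue of Lemma~\ref{nondegtangent} would have to reproduce exactly this computation, and the step you flag as hard --- matching the bi-Lagrangian cone condition with homotopy \'etaleness of $\tilde f$ --- becomes a formal consequence of the filtration identification rather than a separate computation with double brackets.
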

\begin{proof}
 Given a non-degenerate $n$-shifted double co-isotropic structure $(\varpi, \pi, \tilde{f},k)$ as above, we may use a completed version of the equivalence of Corollary \ref{compatcor2} between bisymplectic and non-degenerate double Poisson structures to replace $(\varpi,k)$ with an $n$-shifted bisymplectic structure $\omega$ on $A$ and a homotopy $l \co  \tilde{f}^*\omega \sim \psi$ between the pullback of $\omega$ and the canonical $n$-shifted bisymplectic structure $\psi$  on $\widehat{\Pol}_{\pi}^{nc}(B,n-1)$ corresponding to the $n$-shifted double  Poisson structure $\phi$. 
 
For the natural map $r \co \widehat{\Pol}_{\pi}^{nc}(B,n-1) \to B$, we have $r^*\psi= 0$, so $\lambda:= r^*l$ is a homotopy from $r^*\tilde{f}^*\omega=f^*\omega$ to $0$, and hence is a shifted bi-isotropic structure on $B$ over $A$. Since  $\tilde{f}$ is homotopy formally \'etale,  the shifted bi-isotropic structure $(\omega, \lambda)$ induces a the required quasi-isomorphism between tangent and relative cotangent spaces  to make it  bi-Lagrangian, thus giving us our desired map from non-degenerate shifted double co-isotropic structures to shifted bi-Lagrangians.

In order to show that this map $\cD\Coiso(A,B;n)^{\nondeg} \to \BiLag(A,B;n) $  is an equivalence, we cofilter the respective spaces, and work up a tower of obstructions, similarly to our comparison of double Poisson and bi-symplectic structures in \S \ref{towersn}. For bi-Lagrangians, we use the cofiltration on bi-isotropic structures from Definition \ref{Isodef}, so 
we have a long exact sequence
\begin{align*}
 \ldots \to &\pi_i(\BiLag(A,B;n)/F^{p+1}) \\
 &\to \pi_i(\BiLag(A,B;n)/F^{p})\to \H_{P+i-n-2}\cone(\Omega^{p+1}_{\cyc}(A/R)  \to \Omega^{p+1}_{\cyc}(B/R))\to \ldots 
\end{align*}
for $p \ge 2$.

The cofiltration on $\cD\Coiso(A,B;n)^{\nondeg}$ is more subtle, involving both the Hodge filtration and the canonical filtration $F$ on $\widehat{\Pol}_{\pi}^{nc}(B,n-1)$. We let $\Fil$ be the smallest multiplicative filtration on 
\[
\DR(\widehat{\Pol}_{\pi}^{nc}(B,n-1)):= \Lim_p  \DR(\widehat{\Pol}_{\pi}^{nc}(B,n-1)/F^p)
\]
for which $\Fil^p$ contains both the Hodge filtration $F^p_{\DR}$ and the subspace $F^p_{\Pol}\widehat{\Pol}_{\pi}^{nc}(B,n-1) \subset \widehat{\Pol}_{\pi}^{nc}(B,n-1)$. For instance, if $B$ is freely generated by elements $x_i$, with corresponding tangent vectors $\xi_i$, then the index of $\Fil^p$ is given by the combined powers of $dx_i, \xi_i,d\xi_i$. As an immediate consequence, note that on the associated graded pieces, the maps $\gr_{\Fil}^p \DR(\widehat{\Pol}_{\pi}^{nc}(B,n-1)) \to \gr_F^p\DR(B)= \Omega^p_B$ are  quasi-isomorphisms. 

Via the equivalence described in the first paragraph, we then let $\cD\Coiso(A,B;n)^{\nondeg}/F^p$ correspond to data  $(\omega,\pi,  \tilde{f},l)$, with  
\[
\omega \in \BiSp(A,n)/F^p, \quad \pi \in \cD\cP(B,n-1)/F^p, \quad \tilde{f} \co A \to \widehat{\Pol}_{\pi}^{nc}(B,n-1)/F^{p-1}
\]
lifting $f$ and inducing a quasi-isomorphism $f^*\Omega^1_A \simeq r^*\Omega^1_{\widehat{\Pol}_{\pi}^{nc}(B,n-1)}$, and $l$ a homotopy  $\tilde{f}^*\omega \sim \psi$ in $F^2_{\DR}\DR(\widehat{\Pol}_{\pi}^{nc}(B,n-1))/(F^2_{\DR} \cap \Fil^p)$. There is then a natural map 
\[
\cD\Coiso(A,B;n)^{\nondeg}/F^p \to \BiLag(A,B;n)/F^{p}
\]
sending $(\omega,\pi,  \tilde{f},l)$ to $(\omega, r^*l)$, and we will show by induction on $p$ that these are equivalences.

In the base case $p=3$,  for this model of $\cD\Coiso(A,B;n)^{\nondeg}/F^3$, an element $(\omega,\pi,  \tilde{f},l) \in \cD\Coiso(A,B;n)^{\nondeg}/F^3$ consists of a non-degenerate element $\omega \in \z_{-n}\Omega^2_{A,\cyc}$, an element 
\[
\pi \in \z_{-n-1}\HHom_{(B^{e})^{\ten 2}}(((\Omega_B^1)_{[-n]})^{\ten 2}, \{B^{\ten 2}\})^{C_2},
\]
a quasi-isomorphism 
\[
d\tilde{f} \co f^*\Omega^1_A \to \cocone(\mu(-,\pi)\co \Omega^1_B \to  \DDer(B,B^{e})[1-n])
\]
lifting  the canonical map $ f^*\Omega^1_A \to \Omega^1_B$ and a homotopy $l$ in $(r^*\Omega^2_{\widehat{\Pol}_{\pi}^{nc}(B,n-1)})_{\cyc}$ between $\phi$ and $\tilde{f}^*\omega$. We can summarise all this data by saying that once we fix $\omega$, we have a homotopy $(\pi, d\tilde{f}, l)$ killing $\phi$  in the complex
\begin{align*}
\HHom_{(B^{e})^{\ten 2}}(((\Omega_B^1)_{[-n]})^{\ten 2}, \{B^{\ten 2}\})^{C_2}
\xra{r^*} &\HHom_{A^{\oL,e}}( \Omega^1_A,  \DDer(B,B^{e}))[-2n]\\ 
&\xra{\omega^{\sharp}}(r^*\Omega^2_{\widehat{\Pol}_{\pi}^{nc}(B,n-1)})_{\cyc},
\end{align*}
satisfying a non-degeneracy condition.
Using the quasi-isomorphism between $f^*\Omega^1_A$ and $\cocone(\pi^{\flat}\co \Omega^1_B \to \DDer(B,B^{\oL,e})[1-n])$ provided by 
non-degeneracy, this complex is just quasi-isomorphic to   $ \gr_{\Fil}^2\DR(\widehat{\Pol}_{\pi,\cyc}^{nc}(B,n-1)$, which as above is quasi-isomorphic to  $\Omega^2_{B,\cyc}$, ensuring that our map $\cD\Coiso(A,B;n)^{\nondeg}/F^3 \to \BiLag(A,B;n)/F^3$ 
 is  
a weak equivalence.

For the higher cases, we observe that the same reasoning adapts to show that the natural map $\Coiso(A,B;n)^{\nondeg}/F^{p+1} \to \Coiso(A,B;n)^{\nondeg}/F^p $ is the homotopy fibre of an obstruction map to a shift of the complex
\begin{align*}
 \gr_F^{p}\widehat{\Pol}^{nc}(B,n-1)
\xra{(r^*,0)} &\HHom_{A^{\oL,e}}( \Omega^1_A,  \gr_F^{p-1}\widehat{\Pol}^{nc}(B,n-1))[-n]  \oplus \Omega^p_{A,\cyc}\\ 
&\xra{(\omega^{\sharp}, \tilde{f}_2^* )} F^2_{\DR}\gr_{\Fil}^p\DR(\widehat{\Pol}_{\pi,\cyc}^{nc}(B,n-1))),
\end{align*}
and that this is quasi-isomorphic to
$\cone ( \Omega^2_{A,\cyc} \to\gr_{\Fil}^p\DR(\widehat{\Pol}_{\pi,\cyc}^{nc}(B,n-1)))$, so the quasi-isomorphism $\gr_{\Fil}^p\DR(\widehat{\Pol}_{\pi}^{nc}(B,n-1)) \simeq \Omega^p_{B} $ gives us a long exact sequence
\begin{align*}
\ldots\to &\H_{p+i-n-1}\cone(\Omega^{p}_{A/R,\cyc} \to\Omega^{p}_{B/R,\cyc} ) 
\to \pi_i (\Coiso(A,B;n)^{\nondeg}/F^{p+1})\\
&\to  \pi_i(\Coiso(A,B;n)^{\nondeg}/F^{p}) \to \H_{p+i-n-2}\cone (\Omega^{p}_{A/R,\cyc} \to\Omega^{p}_{B/R,\cyc}) 
\to\ldots 
\end{align*}
of homotopy groups. Since the spaces $\BiLag(A,B;n)/F^p$ fit in to identical long exact sequences, compatible with the natural maps $\Coiso(A,B;n)^{\nondeg}/F^{p}\to \BiLag(A,B;n)/F^p$, we deduce by induction  that 
the latter maps are
weak equivalences. 
\end{proof}

In order to proceed further, we need to extend the definition of shifted double co-isotropic structures to apply to morphisms $f \co A \to B$ of stacky DGAAs. Interpreting shifted double Poisson structures as algebras for the double Poisson protoperad is not straightforward in this stacky setting, because they are defined using a mixture of  strict and Tate dg category structures, as in the  commutative setting of \cite[\S \ref{DQpoisson-Artinsn}]{DQpoisson}. However, we can still study non-degenerate double co-isotropic structures without needing to introduce these subtleties, because shifted double Poisson structures satisfy homotopy \'etale functoriality.  

We begin by letting $P(B,n)=\Lim_r P(B,n)/F^{r+1}$ be the inverse system  
\[
 \{\prod_{0 \le p \le r} \cHom_{(B^{e})^{\ten p}}(((\Omega^1_{B/R})_{[-n-1]})^{\ten p},[B^{\ten (p+1)}])\}_r
 \]
 of double complexes, so $\widehat{\Pol}_{\pi}^{nc}(B,n) = \Lim_r \hatTot( P(B)/F^{r+1}) $. An $(n-1)$-shifted double Poisson structure $\pi$ on $B$ then defines an element $[\pi,-] \in \Lim_r \hatHHom(P(B)/F^{r}, P(B)/F^{r+1} )$, which we can think of as  a deformation $P_{\pi}(B)$ of $P(B)$.  Although $P_{\pi}(B,n-1)$ is not usually a stacky DGAA, since $\pi$ can contain terms with both chain and  cochain degrees being non-zero,  complexes such as $\hatTot \Omega^1_{P_{\pi}(B,n-1) }$ still exist. Similarly, we can define associative maps $A \to P_{\pi}(B,n-1)$ from a cofibrant stacky DGAA $A$ as consisting of elements of $\Lim_r (\hatHHom_R(A, P(B,n)/F^{r+1}), \delta \pm \pd + [\pi,-])$ respecting the respective multiplicative structures.
 
 We can then let a non-degenerate $n$-shifted double  co-isotropic structure on a morphism $f \co A \to B$ of stacky DGAAs in $dg_+DG^+\Alg(R)$ consist of  an $n$-shifted double Poisson structure $\varpi$ on $A$, an $(n-1)$-shifted  double Poisson structure $\pi$ on $B$, a homotopy formally \'etale  associative morphism $\tilde{f} \co A \to P_{\pi}(B,n-1) $ lifting  $f \co A \to    P(B,n)/F^{1}=B$, and a homotopy between the canonical $n$-shifted double Poisson structure on $P_{\pi}(B,n-1) $ and the structure induced from $\varpi$ by \'etale functoriality.
 
Inserting $\hatTot$s in the relevant places, Proposition \ref{coisoprop} then adapts to stacky DGAAs.   Since everything in sight satisfies homotopy \'etale functoriality, these definitions  also extend to  morphisms  $\eta \co G \to F$ of  homogeneous derived NC prestacks with bounded below cotangent complexes, similarly to  Definition \ref{PreBiSphgsdef}, so the proof of Corollary \ref{compatcor2} (and its generalisation in \S \ref{Artincompat})  still applies. The resulting comparison statement is then:
 
 \begin{corollary}\label{coisocor}
 On a morphism $\eta \co G \to F$ of  homogeneous derived NC prestacks with perfect cotangent complexes,
   there is a natural equivalence between the space $\BiLag(F,G;n)$ of $n$-shifted  bi-Lagrangian structures  and the space $\cD\Coiso(F,G;n)$ of non-degenerate $n$-shifted double co-isotropic structures.
\end{corollary}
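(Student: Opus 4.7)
The plan is to parallel exactly the three-step strategy used to pass from Corollary \ref{compatcor2} to Theorem \ref{Artinthm}, with Proposition \ref{coisoprop} now playing the role of the affine base case. The starting datum is Proposition \ref{coisoprop}, which handles morphisms of cofibrant DGAAs in $dg_+\Alg(R)$, together with the extended definitions of non-degenerate shifted double co-isotropic structures on morphisms of stacky DGAAs sketched in the paragraphs immediately preceding the corollary.

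First, I would extend Proposition \ref{coisoprop} from $dg_+\Alg(R)$ to cofibrant stacky DGAAs in $dg_+DG^+\Alg(R)$. Using the extended definitions of $\widehat{\Pol}^{nc}_\pi(B,n-1)$, of non-degenerate shifted double Poisson structures (Definition \ref{binondegdef}), and of $n$-shifted bi-Lagrangian structures (Definition \ref{Lagdef2}), the same cofiltration and obstruction tower argument as in Proposition \ref{coisoprop} applies, once every $\Hom$-complex is replaced by its $\hat{\HHom}$ analogue and every $\Omega^p_{\cyc}$ by $\Tot^{\Pi}\Omega^p_{\cyc}$. The mixed filtration $\Fil$ on $\DR(\widehat{\Pol}^{nc}_\pi(B,n-1))$ combining the Hodge filtration with the $\Pol$-filtration still makes sense because the deformation $P_\pi(B,n-1)$ of $P(B,n-1)$ has been defined using $\hat{\HHom}$, and its associated graded pieces remain quasi-isomorphic to $\Tot^{\Pi}\Omega^p_B$. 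The inductive comparison of long exact sequences then yields for each $p$ a weak equivalence $\cD\Coiso(A,B;n)^{\nondeg}/F^p \xra{\sim} \BiLag(A,B;n)/F^p$, and passage to the limit gives the equivalence at the level of cofibrant stacky DGAAs.

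Next, I would verify \'etale functoriality in the arrow category: both $\BiLag(-,-;n)$ and $\cD\Coiso(-,-;n)^{\nondeg}$ satisfy the analogues of Properties \ref{Fproperties} for diagrams $A \to B$ in $dg_+DG^+\Alg(R)^{[1]}_{c,\onto}$. For $\BiLag$ this is built into the definition of $\BiLag(F,G;n)$ in Definition \ref{PreBiSphgsdef}; for $\cD\Coiso^{\nondeg}$ it reduces by the compatibility datum $\tilde f \co A \to P_\pi(B,n-1)$ being required homotopy formally \'etale to the already-established \'etale functoriality of shifted double Poisson structures from \S \ref{Artindiagramsn}. This allows us to define $\infty$-functors $\oR\BiLag$ and $\oR\cD\Coiso^{\nondeg}$ on the localisation $\oL(dg_+DG^+\Aff(R)^{[1]})^{\et}$, and the previous step upgrades to a natural weak equivalence between them. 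Mapping out of the functor $D_*(G\to F)_{\rig}$ of rigid relative points then gives the required equivalence $\BiLag(F,G;n)\simeq \cD\Coiso(F,G;n)$ for any morphism $\eta \co G\to F$ of homogeneous derived NC prestacks with perfect cotangent complexes.

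The main obstacle is the first step: one has to confirm that the mixed filtration $\Fil$ on $\DR(\widehat{\Pol}^{nc}_\pi(B,n-1))$ continues to have associated gradeds quasi-isomorphic to $\Tot^{\Pi}\Omega^p_B$ in the stacky setting, and that the non-degeneracy quasi-isomorphism provided by $\pi_2^{\flat}$ in Definition \ref{binondegdef} still identifies $\cocone(\pi^\flat)$ with $f^*\hat{\Tot}\Omega^1_A\ten_{A^{e}}(A^0)^{e}$ up to shift. This depends on the interaction of $\hat{\Tot}$ with the $\Pol$-filtration being well-behaved, which is the reason $P_\pi(B,n-1)$ was constructed from $\cHom$ and $\hat{\HHom}$ rather than from na\"ive $\HHom$-complexes. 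Granted these compatibilities, the filtration-by-filtration obstruction calculus of Proposition \ref{coisoprop} transports without further modification.
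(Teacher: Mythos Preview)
Your proposal is correct and follows essentially the same route as the paper: the paper's argument for Corollary \ref{coisocor} is precisely to insert $\hat{\Tot}$s to extend Proposition \ref{coisoprop} to stacky DGAAs, then invoke homotopy \'etale functoriality (as in \S\ref{Artindiagramsn}) and the passage to prestacks via $D_*(G\to F)_{\rig}$, exactly paralleling the passage from Corollary \ref{compatcor2} to Theorem \ref{Artinthm}. Your write-up in fact supplies more detail than the paper's own terse ``the proof adapts'' justification, and the caveat you flag about the behaviour of $\Fil$ under $\hat{\Tot}$ is the right technical point to watch.
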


\begin{remarks}\label{CostelloRozenblyumRmk}
Corollary \ref{coisocor} implies that  an  $n$-shifted  bi-Lagrangian structure on a morphism $F \to G$ induces an $(n-1)$-shifted double Poisson structure on $F$. Applied to Theorems \ref{Perfthm2Lag} and \ref{MorthmLag}, this implies that for any   $d$-dimensional right (resp. left) Calabi-Yau structure on a dg functor $\theta \co \cB \to  \cA$  (resp. $\phi \co \cA \to \cB$), there is a canonical $(2-d)$-shifted double Poisson structure on the derived NC prestack $\Perf_{\cB}$ of perfect $\cB$-modules (resp. $\Mor_{\cB}$ of derived Morita morphisms from $\cB$), which we can interpret as in Corollary \ref{Perfpoissoncor} (resp. Corollary \ref{Morpoissoncor}). In particular, this applies to $\Perf_B$ when $B$ is a pre-Calabi--Yau algebra, so effectively generalises \cite{IyuduKontsevichPreCYNCPoisson,FernandezHerscovichCyclicAinftyDoublePoisson}. 

In the commutative setting, there is an unpublished observation of Costello and Rozenblyum that a form of converse is true, with every 
$k$-shifted Poisson structure arising as a formal $(k+1)$-shifted Lagrangian in a twist of the shifted cotangent bundle. In our setting, this amounts to the observation that the natural map $r \co P_{\pi}(B,k) \to B$ is $(k+1)$-shifted bi-Lagrangian for any $k$-shifted double Poisson structure $\pi$ on a cofibrant stacky DGAA $B$; the formal $(k+1)$-shifted bi-symplectic structure $\phi$ on $P_{\pi}(B,k)$ satisfies $r^*\phi=0$, so the bi-Lagrangian structure is simply $(\phi,0)$.

We can then apply completed versions  of Propositions \ref{weilArtinprop} and \ref{weilhgsprop} to deduce that the map $\Pi_{S/R}r \co \Pi_{S/R} P_{\pi}(B,k) \to \Pi_{S/R}B$ of Weil restrictions  is $(k+1-d)$-shifted bi-Lagrangian for any  $R$-DGAA $S \in dg_+\Alg(R)$ which is perfect as an  $R$-module and equipped with a non-degenerate $R$-linear chain  map $\tr \co S/^{\oL}[S,S] \to R_{[-d]}$. The formal generalisation of Proposition \ref{coisoprop} then gives us a map
\[
 \cD\cP(F,k) \to \cD\cP( \Pi_{S/R}F,k-d)
\]
from the space of $k$-shifted double Poisson structures on a homogeneous NC prestack $F$ to the space
of $(k-d)$-shifted double Poisson structures on  $\Pi_{S/R}F$. 

In particular, this gives a  map  $\cD\cP(F,k) \to \cD\cP( \Pi_{\Mat_n}F,k)$ from the space of  $k$-shifted double Poisson structures on  $F$  to the space of  $k$-shifted double Poisson structures on the derived  NC prestack $\Pi_{\Mat_n}F$ of $n$-dimensional representations, and hence via Remark \ref{poissoncommrmk} to the space $\cP( (\Pi_{\Mat_n}F)^{\comm, \sharp},k)$ of $k$-shifted Poisson structures on the (commutative) derived stack of $n$-dimensional representations of $F$.

\end{remarks}



\bibliographystyle{alphanum}
\bibliography{references.bib}

\end{document}